\documentclass[12pt]{amsart}
\usepackage{amsfonts}
\usepackage{amsfonts,latexsym,rawfonts,amsmath,amssymb,amsthm}
\usepackage[plainpages=false]{hyperref}

\usepackage{graphicx}

\RequirePackage{color}

\numberwithin{equation}{section}

\newcommand{\beq}{\begin{equation}}
\newcommand{\eeq}{\end{equation}}
\newcommand{\beqs}{\begin{eqnarray*}}
\newcommand{\eeqs}{\end{eqnarray*}}
\newcommand{\beqn}{\begin{eqnarray}}
\newcommand{\eeqn}{\end{eqnarray}}
\newcommand{\beqa}{\begin{array}}
\newcommand{\eeqa}{\end{array}}

\newcommand{\bei}{\begin{itemize}}
\newcommand{\eei}{\end{itemize}}
 
\def\lan{\langle}
\def\ran{\rangle}
\def\p{\partial }
\def\eps{\epsilon}
\def\th{\theta}

\def\L{\mathcal L}
\def\M{\mathcal M}

\def\R{\Bbb R}

\def\Om{\Omega}
\def\pom{\p  \Omega}
\def\ol{\overline}
\def\noo{\noindent}

\def\bom{\overline\Omega}
\def\no{\nonumber}
\def\wtt{\widetilde}
\def\cR{\mathcal R}

\def\ou{\mathring u}
\def\U{\mathcal U}
\def\W{\mathcal W}

\newtheorem{Theorem}{Theorem}[section]
\newtheorem{Lemma}{Lemma}[section]
\newtheorem{Corollary}{Corollary}[section]
\newtheorem{Proposition}{Proposition}[section]

\title  {The higher dimensional affine Bernstein \\ and Plateau problems}

\begin{document}

\author {Neil S. Trudinger\ \ \ \ Xu-jia Wang}

\address{N.S. Trudinger: \ 
Mathematical Sciences Institute,
Australian National University, Canberra, ACT 0200, Australia.
}

\address{Xu-Jia Wang: \ 
Institute for Theoretical Sciences, 
Westlake University, Hangzhou, 310030, China.\\ 
}

\email{Neil.Trudinger@maths.anu.edu.au,\ \ \  Wangxujia@westlake.edu.cn}

\thanks {The first author was supported by the Australian Research Council (DP230100499). }

\subjclass {53A15,  35J96, 53J40}


\bibliographystyle{plain}

\begin{abstract}
In our previous papers \cite{[TW1], [TW2]}, 
we established the affine Bernstein theorem
and proved the interior regularity for the affine Plateau problem in two dimensions.
In this paper, we prove the interior regularity for the affine Plateau problem
in all dimensions and present a smooth counterexample to the affine Bernstein problem in dimensions ten and higher.
We also prove an affine Bernstein theorem in all dimensions under a growth condition.

\vskip20pt

\centerline{\bf Notes added on October 7th, 2026}
\vskip10pt

\noindent
Today OpenAI released nearly 400 papers solving major conjectures in mathematics. One of them, titled 

\vskip5pt

\centerline{\it A Smooth Nonquadratic Entire Affine Maximal Graph in Dimension Ten}

\vskip5pt

\noindent
gives a counterexample to the affine Bernstein problem in dimension 10. This result is already included in Section 3 of this paper. The paper was submitted on February 2, 2026. The results of this paper have been presented at several conferences this year, including in March at Shanghai Jiao-Tong University and in May at the “National Workshop on Geometric Analysis 2026” at Zhuhai, China.

 \end{abstract}

\maketitle

\baselineskip=16.8pt
\parskip=5pt

\section{Introduction}

In this paper, we investigate the affine Bernstein and affine Plateau problems, 
proposed by Chern and Calabi \cite { [Ca1], [Ch]}.
These are the affine invariant analogues 
of the celebrated Bernstein and Plateau problems for minimal surfaces. 
The underlying affine metric was introduced in the early twentieth century,
see e.g. Blaschke's famous treatise \cite{[B]}.
Since then affine geometry has become a classical subject of differential geometry 
and continues to be a focus of extensive research.

The affine mean curvature equation is a nonlinear fourth order PDE of the form
\beq \label{a1.1}
 L[u]:={\Small\text{$\sum_{i, j=1}^n $}} U^{ij}w_{x_ix_j}=f(x),    \ \ \ x\in\R^n,       
 \eeq
where $(U^{ij})$ is the cofactor matrix of the Hessian
matrix $D^2 u=(u_{x_ix_j})$, which is assumed to be positive, and $w$ is given by
\beq\label{a1.2}
 w=  [\det D^2 u]^{\theta-1},\ \ \ \theta={\Small\text{$\frac{1}{n+2}$}}.
\eeq
When $f=0$, equation \eqref{a1.1} is the affine maximal surface equation.
One can also regard equations \eqref{a1.1} and \eqref{a1.2} as a system of two second order PDEs, 
where \eqref{a1.1} is a linearized Monge-Amp\`ere equation and \eqref{a1.2} is a classical Monge-Amp\`ere equation.
Therefore the study of the affine maximal surface equation relies heavily on techniques and results 
from the theory of Monge-Ampère type equations \cite {[C1], [C2], [C3], [CG], [Fi], [Gu], [Le25]}.

The Bernstein problem for minimal surfaces has been a core problem in geometric analysis
ever since Bernstein proved that an entire two-dimensional minimal graph is a plane.
This result was extended to dimensions $n \le 7$ 
by de Giorgi for $n=3$, Almgren for $n=4$, Simons for $n=5,6,7$, 
after which a counterexample, for all dimensions $n\ge 8$, 
was discovered by Bombieri, de Giorgi, and Giusti; see \cite{[BDG]}.
It still remains a subject of considerable research interest to date \cite{[Be], [CL24]}.
The Bernstein problem for fully nonlinear elliptic equations has also received much attention.
For the Monge-Amp\`ere equation, it was proved by Jörgens for $n = 2$, Calabi  for $2\le n\le 5$,
and Pogorelov for all $n\ge 2$, that an entire convex solution must be a quadratic polynomial \cite{[P]}. 
See also \cite{[W16a], [W16b], [Y02]} for the Bernstein problem for other fully nonlinear elliptic equations.

For the affine Bernstein problem,
Chern \cite{[Ch]} conjectured that an entire affine maximal graph 
must be that of a quadratic polynomial in dimension $n=2$.
Calabi \cite{[Ca1]} proved that an affine maximal surface in $\R^3$ 
is an elliptic paraboloid if it is both Euclidean complete and affine complete.
The Chern conjecture was finally resolved by the authors in  \cite{[TW1]},
which stimulated further researches on the topic \cite {[AMM1], [AMM2], [CSZ], [Du20], [Du24], [Le17],  [Zh]}.
A conjecture on higher dimensional affine Bernstein problem was proposed in  \cite{[TW5]}, which states
\begin{itemize}
\item []{\it
When $n\le 9$, a Euclidean complete, locally uniformly convex affine maximal hypersurface in $\R^{n+1}$ 
must be an elliptic paraboloid. When $n \ge 10$, 
there is a smooth, locally convex affine maximal hypersurface in $\R^{n+1}$ which is not an elliptic paraboloid.}
 \end{itemize}

In this paper, we provide a smooth counterexample to the affine Bernstein problem
in dimensions $n\ge 10$ by reducing equation \eqref{a1.1} to a fourth order ODE (Theorem \ref{Thm2.1}), 
thereby resolving the second part of the above conjecture.
In \cite{[TW1]}, a non-quadratic entire affine maximal graph was constructed for dimension $n=10$, 
but the function has a singularity at the origin. 
Its graph is an affine maximal cone, 
in the sense that its level sets are self-similar up to affine transforms.
This non-smooth function in \cite{[TW1]} is used in this paper  to prove 
that the ODE admits a global smooth solution and thus to construct a smooth counterexample
to the  affine Bernstein problem.
We also prove a Bernstein theorem in all dimensions under a growth condition 
(Theorem \ref{Thm2.2}) that excludes our counterexample. 

The affine Plateau problem can be formulated as a geometric variational problem.
The existence of maximizers follows from the upper semi-continuity and concavity
of the functional \cite{[TW2], [Lu]}.
As in the classical Plateau problem, the regularity is the main focus of interest.
In our previous paper \cite{[TW2]}, we were able to prove the interior regularity of maximizers in two dimensions. 
In this paper, we establish the interior regularity in all dimensions. 
The results are stated in Theorem \ref{Thm2.3} for the first boundary value problem for the affine maximal surface equation,
which corresponds to the graph case, and  in Theorem \ref{Thm2.4} for the general affine Plateau problem.

Note that in equations \eqref{a1.1} and \eqref{a1.2}, 
the parameter $\theta=\frac {1}{n+2}$ corresponds to  the affine invariant case.
When $\theta=0$, equation \eqref{a1.1} is known as Abreu’s equation.
For general $\theta\ge 0$,  \eqref{a1.1} represents an important class of fourth order PDEs
that has been extensively studied in recent years
\cite{[CW], [KLWZ], [Le13], [Le16], [Le20], [Le21],  [LZ21], [WZ23]}.
The affine invariance at $\theta=\frac{1}{n+2}$ is particularly useful 
for establishing interior regularity for the affine Plateau problem.

A special case of the affine Plateau problem is the first boundary value problem
for the affine maximal surface equation. This involves prescribing both the solution $u$ and its gradient
 $Du$ on the boundary, 
\beqn
  u & =& \phi\ \ \ \ \ \text{on}\ \ \pom, \label{a1.3} \\
 Du & =& D\phi\ \ \ \text{on}\ \ \pom,  \label{a1.4} 
\eeqn
where $\phi$ is a smooth, uniformly convex function. Note that when
the boundary value \eqref{a1.3} is given,  the gradient condition \eqref{a1.4} is
a single condition, equivalent to
prescribing the normal derivative of $u$ on the boundary.

The affine mean curvature equation \eqref{a1.1} is the Euler equation of
the functional
\beq \label{a1.5}
J(u)=A(u) - {\small\text{$\frac{1}{n+2}$}} \int_\Om f(x)\,udx,
\eeq
where
\beq\label{a1.6}
A(u)=\int_\Om \big[\det D^2 u\big]^{\frac{1}{n+2}}
\eeq
is the affine area functional. A function which maximizes the
functional $A$ under local convex perturbation is called {\it affine
maximal} and its graph is called an affine maximal hypersurface.
Accordingly its Euler equation is called the {\it affine maximal surface
equation}. The functional $A$ is the volume functional
of the affine metric
\beq \label{a1.7}
g =[\det D^2 u]^{-\frac{1}{n+2}}u_{x_ix_j}dx_idx_j.
\eeq
It is Riemannian, and accordingly equation \eqref{a1.1} is elliptic, when
the Hessian matrix $D^2 u$ is positive, namely when
$u$ is locally uniformly convex.

\vskip5pt

Following the strategy of \cite{[TW2]}, 
our approach to the interior regularity of the affine Plateau problem proceeds in two main stages. 
The regularity of {\it strictly convex} maximizers is first established
through a priori estimates and an approximation argument 
based on penalization by the second boundary value problem. 
This reduces the final stage of the proof to establishing strict convexity,
which necessitated the restriction to two dimensions in \cite{[TW2]}.

Recall that for the Monge-Ampère equation, 
the strict convexity of solutions to the Dirichlet problem 
was proved under an affine boundary condition  \cite{[C1]}. 
For the second boundary value problem, 
strict convexity was established under the assumption that the target domain is convex \cite{[C3]}. 
In contrast, for the fourth-order affine maximal surface equation, 
the issue of strict convexity, particularly in higher dimensions, 
is considerably more complicated, and the affine boundary data alone is not enough to guarantee the strict convexity. 
In this paper, we establish strict convexity using novel techniques. 
First, we improve the approximation in \cite{[TW2]} 
by a secondary penalization; 
this allows us to apply a priori estimates from the dual problem of the affine volume functional. 
Second, through a delicate convexity analysis, 
we demonstrate precisely how the regularity condition on the boundary 
ensures the solution to be strictly convex.
   
A further subtlety in the affine Plateau problem is that 
the key approximation (Theorem  \ref{Thm4.1}) cannot be applied directly, 
as it relies on the uniqueness of maximizers of \eqref{a1.9},
a property that may fail for problem  \eqref{a1.10}. 
This necessitates a novel localization of the a priori estimates, 
of which the details are presented in Section 6.

This paper is arranged as follows. 
In Section 2, we first state the main results of the paper,
introduce the dual problem, then recall the a priori estimates,
and establish an upper bound for the Gauss curvature of the maximizer, 
for the affine maximal surface equation. 
Section 3 addresses the affine Bernstein problem and contains the proofs of Theorems \ref{Thm2.1} and \ref{Thm2.2}.
The remainder of the paper is devoted to the affine Plateau problem.
In Section 4 we establish the improved approximation result. 
In Section 5 we prove the strict convexity of the maximizer of \eqref{a1.9}, 
thereby completing the proof of Theorem \ref{Thm2.3}. 
Finally, in Section 6 we study the general affine Plateau problem,
proving the strict convexity of maximizers of \eqref{a1.10} and thus establishing Theorem \ref{Thm2.4}.



\section{\bf  Main Results and Preliminaries}

\subsection{Main Results} 

The first result of the paper is a counterexample to the affine Bernstein problem.

\begin{Theorem} \label{Thm2.1}
When the dimension $n\ge 10$,
there exists an entire smooth convex solution to the affine maximal surface equation 
which is not a quadratic polynomial.
 \end{Theorem}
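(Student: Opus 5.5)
We look for a solution with a symmetry that reduces \eqref{a1.1}--\eqref{a1.2} (with $f=0$) to an ODE. Recall the known singular example from \cite{[TW1]} in dimension $n=10$. Its graph is an affine maximal cone: its level sets are self-similar up to affine transformations. We therefore seek $u(x)=u(x',x'')$ on $\R^n=\R^{k}\times\R^{m}$, invariant under $O(k)\times O(m)$ and, more importantly, equivariant under the anisotropic dilations $(x',x'')\mapsto(\lambda^a x',\lambda^b x'')$ with $u\mapsto\lambda^c u$. A natural ansatz is
\[
u(x)=\tfrac12|x''|^2+|x'|^{p}\,h\big(|x''|\,|x'|^{-q}\big),
\]
or the equivalent form $u=r^{\alpha}F(s/r^{\beta})$ in radial variables $r=|x'|$, $s=|x''|$. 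We choose the exponents so that $\det D^2u$ and the operator $U^{ij}\partial_{ij}$ scale consistently under the dilation. Plugging this in, the cofactor structure for functions of two radial variables reduces $\det D^2u$ to a product of a $2\times2$ determinant with the factors $(u_r/r)^{k-1}(u_s/s)^{m-1}$. Then $w=[\det D^2u]^{\theta-1}$ and $L[u]=0$ become a fourth order ODE in the single self-similar variable $t$. The first step of the proof is this computation, which writes the equation as an autonomous-type fourth order ODE for $h$ (or $F$). The singular cone of \cite{[TW1]} corresponds to a special explicit solution, for example a power or an equilibrium of the ODE.

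The second step is a shooting/continuation argument. Smoothness at the axis $r=0$ translates into a regularity condition at one endpoint of the ODE: $h$ should extend as an analytic function of the appropriate squared variable. The local theory of the singular ODE at that endpoint, via a Frobenius-type or fixed point argument, gives a family of local smooth solutions depending on free parameters. We then show that for suitable parameters the solution exists for all $t$, keeps $D^2u>0$, and as $t\to\infty$ is asymptotic to the singular cone solution. The cone solution acts as a barrier and attractor: linearizing the ODE about the cone solution (in logarithmic variables) gives a constant coefficient linear ODE. When $n\ge10$ its indicial roots have the right sign and reality structure, and this is where the dimension restriction enters, in analogy with the Simons cone for $n\ge8$. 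Using this, we show that the trajectory from the smooth end can be matched to the cone's stable manifold, or at least stays trapped in the region of convexity.

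For dimensions $n>10$, we either repeat the construction with other splittings $k+m=n$, or take the $n=10$ example and add quadratic terms in the extra variables. If $u(x)$ solves the affine maximal equation in $\R^{10}$, then $u(x)+\frac12|y|^2$ solves it in $\R^{10+j}$: the determinant is unchanged, $w$ is unchanged, and the extra cofactor terms hit $w_{yy}=0$. This gives all $n\ge10$ at once, and the result is nonquadratic and smooth. The main obstacle is the second step: proving global existence and convexity of the ODE solution emanating from the smooth end. I would first try a phase-plane/comparison argument with the \cite{[TW1]} cone as a sub- or supersolution, controlling $\det D^2u$ from both sides so that the fourth order system stays nondegenerate for all $t$.
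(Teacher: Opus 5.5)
There is a genuine gap at the very first step. Exact equivariance under anisotropic dilations cannot produce a smooth nonquadratic entire solution. Suppose $u(\lambda^a x',\lambda^b x'')=\lambda^c u(x)$ for all $\lambda>0$, and $u$ is smooth with $D^2u(0)>0$. Then the restrictions of $u$ to $\{x''=0\}$ and $\{x'=0\}$ are homogeneous with nondegenerate Hessian at $0$, which forces $c=2a=2b$. So $u$ is homogeneous of degree two and, being smooth at $0$, equals its quadratic Taylor polynomial. In your notation, $u(x',0)=|x'|^p h(0)$ forces $p=2$, and compatibility with $\frac12|x''|^2$ forces $p=2q$, so $q=1$.

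Every solution of your reduced ODE is therefore a weighted cone (singular at the origin) or a quadratic. The two ends of $t=|x''||x'|^{-q}$ are the subspaces $\{x'=0\}$ and $\{x''=0\}$, not the origin and infinity, so there is no ``smooth end'' from which to shoot toward the cone. That picture belongs to a symmetry reduction that does \emph{not} contain the scaling; there the scaling only serves afterwards to make the ODE autonomous, as in Emden--Fowler. The missing idea is the equi-affine hyperbolic rotation of the $(x_n,x_{n+1})$-plane. The paper takes $u=\sqrt{x_n^2+2h(|x'|)}$, i.e.\ after a $45^\circ$ rotation the hypersurface $y_ny_{n+1}=h(|y'|)$. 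It is invariant under $(y_n,y_{n+1})\mapsto(\lambda y_n,\lambda^{-1}y_{n+1})$ but under no rescaling of $y'$. This gives the ODE \eqref{a3.4}--\eqref{a3.5} in $r=|y'|$ with regular data \eqref{a3.6} at $r=0$, and the cone of \cite{[TW1]} is just the power solution $c^*r^9$.

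You are right that this cone is the key comparison object, but your Step 2 supplies no mechanism. A fourth order ODE has no maximum principle, and no asymptotics or linearization at the cone are needed. The paper argues as follows. If the maximal interval $[0,R^*)$ is finite and $h$ or $h'$ blows up at $R^*$, raise $c^*$ until $c^*r^9\le h$ touches $h$ tangentially at some $r_0\in(0,R^*)$. Both functions then maximize the concave reduced functional $\mathcal J$ in $\ol S[\mathcal I_0;\eta,\kappa]$ with the same data at $r_0$, contradicting uniqueness of maximizers. If $h,h'$ stay bounded, Lemma \ref{L3.2} bounds $h'',h'''$, so the solution continues past $R^*$. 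That lemma uses the divergence form \eqref{a3.8} to get $g'\le 0$, Gr\"onwall for $(g,g')$, and a Riccati-type inequality for $\sqrt h$. The restriction $n\ge 10$ enters only through the existence of the power solution.

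Finally, your passage from $n=10$ to $N>10$ does not work as justified; Remark 3.3 uses the same device. $\det D^2u$ is unchanged, but $w=[\det D^2u]^{\frac{1}{N+2}-1}$ depends on $N$. Set $w_0=[\det D^2u]^{\frac1{12}-1}$ and $k=\frac{12(N+1)}{11(N+2)}>1$. Since $U^{ij}\partial_{ij}w_0=0$, one gets $U^{ij}\partial_{ij}w_0^k=k(k-1)w_0^{k-2}U^{ij}\partial_iw_0\partial_jw_0$. This is positive wherever $Dw_0\neq0$, and $Dw_0\equiv 0$ would make $u$ quadratic by Pogorelov's theorem.

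The natural repair is to run the argument directly in dimension $N$. Substitute $h=r^\gamma$ into \eqref{a3.4} (with $n=N$) and set $X=N\gamma-2N+2$. This gives $4X^2-(N+2)(N-2)(N-4)X+2(N+2)^2(N-1)(N-2)=0$. It has real roots, all with $\gamma>2$, exactly when $(N-2)(N-4)^2\ge 32(N-1)$, i.e.\ $N\ge 10$, with a double root $\gamma=9$ at $N=10$.
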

 
Theorem \eqref{Thm2.1} will be proved by considering a special class of functions and reduced 
the affine maximal surface equation to a fourth order ODE.

We then prove a Bernstein theorem under a growth condition.

\begin{Theorem} \label{Thm2.2}
Let $u$ be a smooth, convex solution to the affine maximal surface
equation.  Then $u$ is a quadratic polynomial
if there exists a positive constant $\delta>0$ such that either 
\beq \label{growa}
u(\lambda x)\ge (\lambda+\delta)u(x) \ \ \text{as}\ \ |x|\to\infty,
\eeq
for a fixed constant $\lambda>1$; or
\beq \label{growb}
x\cdot Du(x)\ge (1+\delta) u(x)\ \ \text{as}\ \ |x|\to\infty .
\eeq
 \end{Theorem}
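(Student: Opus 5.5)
Under either growth condition, the plan is to show that $u$ grows superlinearly in a uniform, scale-invariant way. With that in hand, the rescalings of $u$ are uniformly controlled, and the a priori estimates of \cite{[TW1]} (determinant bounds plus strict convexity) apply on every scale. This yields uniform bounds on $D^2u$, and hence the Bernstein conclusion by the Liouville-type argument used there.

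\textbf{Step 1: normalization and reduction of \eqref{growa} to \eqref{growb}.} Subtract the tangent plane at $0$, so that $u\ge0$, $u(0)=0$, $Du(0)=0$; both growth conditions are preserved up to a harmless change of $\delta$.

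Under \eqref{growb}, set $g(t)=u(tx)$. Then $tg'(t)\ge(1+\delta)g(t)$ for large $t|x|$. Integrating gives $u(tx)\ge c\,t^{1+\delta}$ for $|x|=1$, uniformly in the direction, using compactness of the sphere and $u>0$ off the origin (which holds by convexity once $u$ is not affine).

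Under \eqref{growa}, iterate to get $u(\lambda^kx)\ge\prod_{j}(\lambda+\delta)\,u(x)$. This gives growth $|x|^{1+\delta'}$ with $\delta'=\log(1+\delta/\lambda)/\log\lambda>0$. Hence in both cases
\[
u(x)\ge c|x|^{1+\delta'} \quad\text{for } |x| \text{ large}.
\]

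\textbf{Step 2: uniform control of sections at every height.} Consider the sections $S_h=\{u<h\}$ and normalize each by the affine map $T_h$ sending $S_h$ to a set comparable to the unit ball. Set $u_h=h^{-1}u(T_h^{-1}\cdot)$; this is again affine maximal by affine invariance.

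The key point is that the superlinear growth prevents the normalized functions from degenerating to a function that is linear along some direction, i.e. to a non-strictly-convex limit. Indeed, a blow-down limit with a line segment in the graph along which the limit is linear would contradict $x\cdot Du\ge(1+\delta)u$, which passes to homogeneous limits as $x\cdot Dv\ge(1+\delta)v$. In the \eqref{growa} case the analogous inequality $v(\lambda x)\ge(\lambda+\delta)v(x)$ passes to the limit instead.

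I expect this to be the main obstacle. The counterexample of Theorem \ref{Thm2.1} shows that strict convexity of blow-downs can genuinely fail. In that example, the growth is presumably only linear in some directions, so the argument must use the condition exactly there. I would first try to show that along any ray on which the blow-down limit vanishes, $u$ grows at most linearly, contradicting Step 1. To make this rigorous, compare the Monge--Ampère measure of $S_h$ with the volume of $S_h$ and use the determinant estimates from \cite{[TW1]}.

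\textbf{Step 3: conclusion.} With uniform strict convexity of the normalized $u_h$, the interior estimates for the affine maximal equation (bounds on $\det D^2u_h$ and then the Caffarelli--Gutiérrez/Schauder bootstrap as in \cite{[TW1]}) give uniform $C^{4,\alpha}$ bounds on $u_h$ in the half section. Scaling back, these bounds show that the oscillation of $D^2u$ over $S_{h/2}$ tends to zero as $h\to\infty$. Hence $D^2u$ is constant, and $u$ is a quadratic polynomial.
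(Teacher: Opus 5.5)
\textbf{The gap is in Step~2.} This is where the entire difficulty lies, and the mechanism you propose there does not work.

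\emph{The growth condition does not exclude segments.} The limiting inequality $x\cdot Dv\ge(1+\delta)v$ (or $v(\lambda x)\ge(\lambda+\delta)v(x)$) only controls growth along rays from one point, and many convex functions that are not strictly convex satisfy it. For example, $v=z_1^2$ satisfies $z\cdot Dv=2v$, yet its graph contains lines. More to the point, a limit vanishing on a segment issuing from its minimum point, which is exactly the degeneration of the cone in Example~2, satisfies the inequality there trivially as $0\ge 0$. Moreover, the quantitative bound $u\ge c|x|^{1+\delta'}$ from Step~1 does not survive the anisotropic normalizations $T_h$.

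\emph{Upper determinant bounds are not enough.} Strict convexity has to come from the equation, through two-sided bounds on the Monge--Amp\`ere measure and Caffarelli's Lemma~\ref{L2.2}. Working with sections of $u$ itself, the available estimates (Lemma~\ref{L2.3}(i), Lemma~\ref{L2.4}) only bound $\det D^2u$ from \emph{above}. That alone is useless here: Example~2 is affine maximal and has $\det D^2u\to0$ as one approaches the line $\{x'=0\}$, so an upper bound holds, yet it is not strictly convex. What your plan lacks is a positive \emph{lower} bound for $\det D^2u$ on normalized sections. Your fallback of comparing the Monge--Amp\`ere measure of $S_h$ with its volume does not produce one.

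\textbf{How the paper gets the lower bound: duality.}
\begin{enumerate}
\item Superlinear growth gives $Du(\R^n)=\R^n$, so the Legendre transform $v=u^*$ is entire. It solves the dual equation, i.e.\ \eqref{a2.14} with $\theta=\frac{n+1}{n+2}$ and zero right-hand side.
\item Normalize centered sections of $v$ and pass to a limit $v_0$ on a normalized domain $\Om_0$.
\item The growth condition is used at exactly one point: to show $|Dv_0|$ is bounded in $\Om_0$. A vertical ray in the limiting graph over a boundary point would make the Legendre transform $u_0$ asymptotically linear along a ray. This contradicts \eqref{growa}, which is affine invariant and passes to $u_0$.
\item Bounded gradient is precisely the hypothesis of Lemma~\ref{L2.3} in case (ii). It yields $\det D^2v_0\le C$, which is the missing lower bound for $\det D^2u$.
\item Lemma~\ref{L2.4} supplies the other side, $\det D^2v_0\ge c>0$.
\item Lemma~\ref{L2.2} then gives strict convexity, Caffarelli's theory gives smoothness, and the blow-down argument of \cite{[TW1]} concludes, essentially as in your Step~3. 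Note that in Step~3 what tends to zero is the oscillation of $D^2u$ on a fixed compact set, not on all of $S_{h/2}$.
\end{enumerate}

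Your intuition that the condition must be used exactly where linear growth occurs is right. Its correct form is: superlinearity of $u$ rules out vertical pieces in the dual blow-down, and this feeds the dual determinant estimate, rather than excluding segments directly.
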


In \cite{[EH90]}, Ecker and Huisken studied the Bernstein problem for minimal surfaces.
They proved that a global minimal surface must be a hyperplane in all dimensions
if it satisfies the condition
\beq\label{growc}
|Du(x)|=o(|x|+u(x))\ \ \text{as}\ |x|\to\infty.
\eeq
Our condition \eqref{growb} is inspired by \eqref{growc}. 
The above counterexample shows that Theorem \ref{Thm2.2} does not hold 
if $\delta=0$ in \eqref{growa} or \eqref{growb}.
 
Next we consider the first boundary value problem.
To formulate this problem as a variational maximization problem \cite{[Ca1]}, we 
denote
$$
\begin{array}{l}
\ol S[\phi, \Om]=\{v\in C^0(\bom)\ |\ v\ \text{is convex},
  v=\phi\ \text{on}\ \pom,  N_v(\Om) \subset \overline{N_\phi(\Om)}\},
 \end{array} $$
where $N_v$ is the normal mapping of $v$. 
If $\phi$ is Lipschitz continuous, $N_\phi(\Om)$ is bounded,
and by convexity of functions, the set $\ol S[\phi, \Om]$ is compact.
Hence a solution to the first boundary value problem \eqref{a1.1}-\eqref{a1.4} is necessarily a
maximizer of the functional $J$ in $\ol S[\phi, \Om]$.  In \cite{[TW2]} we
proved the existence of a unique maximizer to the variational
problem
\beq\label{a1.9}
\sup\{J(u)\ |\ u\in \ol S[\phi, \Om]\}, 
\eeq
together with  its interior regularity in dimension two. In this paper we prove

\begin{Theorem}\label{Thm2.3} 
Let $\Om$ be a smooth, bounded domain in $\R^n$
and $\phi$ a smooth, uniformly convex function in $\bom$.
Then
\bei
\item [(i)]  if $f\in L^\infty(\Om)$, 
the maximizer of \eqref{a1.9} is uniformly convex and belongs to the Sobolev space $W^{4,p}_{loc}(\Om)$
for any $p>1$; 

\item [(ii)] if $f\in C^\alpha(\Om)$ for some $\alpha\in
(0, 1)$, the maximizer is  $C^{4,\alpha}$ smooth
in $\Om$.
\eei
\end{Theorem}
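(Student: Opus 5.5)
The proof has two stages, as the introduction describes: first show that a \emph{strictly convex} maximizer is regular, then show that the maximizer of \eqref{a1.9} is in fact strictly convex.

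\textbf{Stage 1: regularity of strictly convex maximizers.} I would not work with the maximizer directly. Instead I would approximate it by smooth solutions $u_k$ of a penalized second boundary value problem. The penalization forces $u_k$ towards $\phi$ outside $\Om$, on a slightly larger domain $\Om_k\supset\Om$. It would be supplemented by a secondary penalization that makes the Legendre dual problem (maximizing the dual affine area functional) accessible as well.

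By uniqueness of the maximizer of \eqref{a1.9}, which follows from strict concavity of $A$ and linearity of the $f$-term, the $u_k$ converge locally uniformly to the maximizer $u$. This is the approximation Theorem \ref{Thm4.1}.

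For smooth solutions I would recall the a priori estimates of \cite{[TW1],[TW2]}:
\begin{itemize}
\item lower and upper bounds for $\det D^2u_k$ on sections $S_h(x_0)=\{u_k<\ell+h\}$ compactly contained in $\Om$ (with $\ell$ the supporting affine function at $x_0$), depending only on $h$, the modulus of convexity and $\|f\|_\infty$;
\item Caffarelli's $C^{1,\alpha}$ and $W^{2,p}$ theory for Monge--Amp\`ere equations with continuous right-hand side, applied to \eqref{a1.2};
\item Caffarelli--Guti\'errez H\"older estimates for the linearized equation \eqref{a1.1}, which give $w\in C^{\alpha}$ and hence $\det D^2u_k\in C^\alpha$;
\item Caffarelli's $C^{2,\alpha}$ estimate, after which \eqref{a1.1} becomes uniformly elliptic with H\"older coefficients.
\end{itemize}
From this point $L^p$ theory gives $w\in W^{2,p}$ and hence $u\in W^{4,p}_{loc}$. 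If $f\in C^\alpha$, Schauder theory gives $u\in C^{4,\alpha}$.

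Every one of these estimates needs sections compactly contained in $\Om$, that is, a uniform modulus of strict convexity of $u_k$ inside $\Om$. So both parts (i) and (ii) reduce to proving that the limit $u$ is strictly convex in $\Om$.

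\textbf{Stage 2: strict convexity (the main obstacle).} I would argue by contradiction. Suppose $u$ agrees with a supporting affine function $\ell$ on a nontrivial convex set $E=\{u=\ell\}$ containing an interior point. By Caffarelli-type results for the Monge--Amp\`ere part, $E$ has no extreme points in $\Om$, so $E$ must reach $\pom$.

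Because $\phi$ is smooth and uniformly convex, $\ell$ can touch $\phi$ on $\pom$ only at isolated points, or at one point if $\Om$ is convex. I must therefore exclude an $E$ that, for instance, is a segment or a cone with its endpoints on $\pom$.

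To do this I would use the dual problem. Under the Legendre transform $u^*$, a flat piece of $u$ becomes a singular point of $u^*$, where the Gauss curvature blows up. The secondary penalization gives a priori bounds for the dual maximizer; the key one is the upper bound on the Gauss curvature of the maximizer established in Section 2. The aim is to show that:
\begin{itemize}
\item $\det D^2u$ cannot vanish along $E$ at the rate that convexity analysis forces near points of $\pom$ where $u$ detaches from $\ell$, because the boundary data $u=\phi$ and $Du=D\phi$ there are smooth and uniformly convex;
\item consequently $E$ must be lower dimensional and must meet $\pom$ at a point where $\phi$ is uniformly convex.
\end{itemize}

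I would then rescale the sections near such a boundary point by affine transforms, using the affine invariance of the equation at $\theta=\frac1{n+2}$. A blow-up limit would be an affine maximal function with a line of degeneracy and growth controlled by $\phi$. I would try to rule this out using the curvature bound together with the concavity of $A$: compare $A(u)$ with the value on a perturbation that bends $u$ along $E$, and obtain a strictly larger $J$, contradicting maximality.

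I expect this boundary convexity analysis, which converts the smoothness and uniform convexity of $\phi$ into a quantitative obstruction to degenerate directions, to be the hardest step. It is exactly what restricted \cite{[TW2]} to two dimensions.
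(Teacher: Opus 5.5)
Your Stage~1 matches the paper (approximation via Theorem~\ref{Thm4.1}, the estimates of Proposition~\ref{P2.1}, and reduction to strict convexity). Stage~2, however, has a genuine gap: its first step is unjustified, and the step that should produce the contradiction is missing.

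You use Caffarelli's theorem to conclude that $E=\{u=\ell\}$ has no extreme points in $\Om$. That needs bounds $C_1\le\det D^2u\le C_2$ near $E$ (cf.\ Lemma~\ref{L2.2}), and neither bound is available for the maximizer at that stage. The upper bound of Lemma~\ref{L2.3} requires the section at the point to stay away from $\pom$. This fails precisely when $E$ reaches $\pom$, e.g.\ a segment with one interior and one boundary endpoint. The lower bound comes only from the dual estimate, which needs a sublevel set of $u^*$ compactly inside $\Om^*$. In the one-endpoint case the slope of $\ell$ lies on $\pom^*$ (Claim~1 of Lemma~\ref{L5.2}), so that fails too.

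The paper therefore argues in the opposite order. Lemmas~\ref{L5.1} and~\ref{L5.2} first exclude segments touching $\pom$. Compactness then gives the separation \eqref{a5.28}. From it, $\det D^2u\le C$ follows by Lemma~\ref{L2.3} in case (i). Also $\det D^2u^*\le C$ follows by Lemma~\ref{L2.3} in case (ii), applied to $u_m^*$; this is legitimate by \eqref{a4.15} and \eqref{a4.30}, with constants independent of $\beta_m$. Only then does Lemma~\ref{L2.2} give strict convexity.

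Separately, your claim that $\ell$ can meet $\phi$ at only one point of $\pom$ when $\Om$ is convex is false. On the unit ball, $\phi=2|x'|^2+x_n^2-1$ is uniformly convex, nonnegative on $\pom$, and vanishes at $\pm e_n$. This two-point configuration is exactly the case treated in Lemma~\ref{L5.1}.

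For flat sets reaching $\pom$, your blow-up followed by a competitor that bends $u$ along $E$ cannot work as stated. Affine maximal functions can contain segments and even lines (Examples~1 and~2), so no local bending increases $A$. You also do not say how the boundary data would enter such a competitor.

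The missing idea is Remark~2.2. A convex $v$ with $v(0)=0$ and $v\ge c\max(|x'|^2,|x_n|)$ cannot have bounded Monge--Amp\`ere density near $0$. On the other hand, \eqref{a2.15} bounds that density as soon as a small sublevel set is compactly contained. The boundary geometry serves only to manufacture this configuration:
\begin{itemize}
\item If both endpoints lie on $\pom$, transversality and smoothness of $\phi$ give $u\le C|x'|^2$. Hence $u^*\ge\frac{1}{2C}|y'|^2$ and $u^*(0,y_n)\ge|y_n|$ at $0\in D\phi(\Om)$, which is an interior point by uniform convexity of $\phi$. This contradicts the dual bound.
\item If exactly one endpoint lies on $\pom$, Claims~1--4 of Lemma~\ref{L5.2} are needed: $0\in\pom^*$ with inner normal $e_n$, and $C_*|x'|^2\le u\le C^*|x'|^2$. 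These make the rescaling $x'\mapsto x'/\sqrt\eps$, $u\mapsto u/\eps$ nondegenerate. The limit contains $\ell$ together with a vertical ray over $e_n$, and a $45^\circ$ rotation yields the forbidden growth.
\end{itemize}

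Finally, the estimate that matters in Section~5 is the upper bound on $\det D^2u^*$, i.e.\ a lower curvature bound for $u$, which is what \eqref{a4.15} makes accessible. The curvature upper bound of Lemma~\ref{L2.4}, which you single out as key, is used only in Section~6.
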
 

We say a function $u\in C(\Om)$ is {\it convex} if
$\forall\ x_0\in \Om$,  $u\ge \L_{x_0}$ in $\Om$ for some
affine function $L_{x_0}$ for which $L_{x_0}(x_0) = u(x_0)$;
$u$ is {\it locally convex} if $u\ge \L_{x_0}$ near the point $x_0$;  
$u$ is strictly convex if $u(x)>\L_{x_0}(x)$ $\forall\ x\in\Om, x\ne x_0$;
and $u$ is uniformly convex if $u\in C^2(\Om)$ is strictly convex and the Hessian matrix $D^2 u$ is positive definite. 
Similarly we can define convexity, strict convexity, and uniform convexity of locally
convex hypersurfaces in $\R^{n+1}$ \cite{[TW2]}.

The affine Plateau problem, as an extension of the first boundary value problem, 
can also be formulated as a variational problem \cite{[Ca1]}.
Let $\M_0$ be a bounded, connected, smooth, and
locally uniformly convex hypersurface in $\R^{n+1}$, with smooth
boundary $\Gamma =\p\M_0$. Similarly we introduce

\begin{itemize}


\item [{$\ol S[\M_0]$:}]  the set of locally convex hypersurfaces $\M$
with boundary $\Gamma $, such that $\M$ can be continuously deformed from $\M_0$ by a family
of locally convex hypersurfaces $\M_t$, such that $\M_{t=1}=\M$,
and the Gauss mapping image of $\M_t$ for $t\in (0, 1)$ is contained in 
that of $\M_0$.
\end{itemize}

\noo
When $\M_0$ is the graph of a convex function $\phi$, by the convexity, one easily verifies that the set $\ol S[\M_0]$
coincides with $\ol S[\phi, \Om]$.

The affine Plateau problem is to determine a hypersurface
$\M\in \ol S[\M_0]$ which maximizes the functional
\beq \label{a1.10}
A(\M)=\sup\{ A(\M')\ {|}\ \M'\in \ol S[\M_0]\},
\eeq
where $A(\M)=\int_\M K^{1/(n+2)}$ is the affine area functional 
and $K$ is the Gauss curvature. 
The functional \eqref{a1.10} coincides with \eqref{a1.6} when $\M$ is the graph of $u$.

In \cite{[TW2]} we proved the existence of maximizers to \eqref{a1.10} in all
dimensions and the interior regularity of maximizers in dimension
two. Here we prove the regularity of maximizers in all
dimensions.

\begin{Theorem} \label{Thm2.4} 
Let $\M_0$ be a bounded, connected, smooth,
and locally uniformly convex hypersurface in $\R^{n+1}$, with $C^\infty$ smooth
boundary $\Gamma =\p\M_0$. Suppose the Gauss image of $\M_0$ does not
contain any hemisphere. Then the maximizer of \eqref{a1.10} is strictly
convex and $C^\infty$ smooth.
\end{Theorem}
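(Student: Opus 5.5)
The proof will follow the two-stage scheme used for Theorem \ref{Thm2.3}, localized to the hypersurface setting. Let $\M$ be a maximizer of \eqref{a1.10}; it exists by \cite{[TW2]}. Since the Gauss image of $\M_0$ contains no hemisphere, every $\M'\in\ol S[\M_0]$ has Gauss image in the same region. This region lies in an open hemisphere up to a small rotation, so after an affine change of coordinates each point $p\in\M$ has a neighborhood in $\M$ that is the graph of a locally convex function $u$ over a domain in a fixed hyperplane. The Gauss image bound gives uniform gradient bounds for these local graphs. Because the functional $A(\M)=\int_\M K^{1/(n+2)}$ is affine invariant and coincides with \eqref{a1.6} on graphs, $u$ is locally affine maximal under compactly supported convex perturbations. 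The task is then to show that each such local graph is strictly convex. Once that holds, the a priori estimates recalled in Section 2 (the determinant bounds, the Caffarelli--Gutiérrez theory for the linearized Monge--Ampère equation \eqref{a1.1}, and the Caffarelli $C^{2,\alpha}$ theory for \eqref{a1.2}), together with the localized approximation of Section 6, give $C^{4,\alpha}$ regularity. Bootstrapping with $f=0$ then gives $C^\infty$.

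The approximation step is the first issue. Theorem \ref{Thm4.1} uses uniqueness of maximizers of \eqref{a1.9}, and this uniqueness may fail for \eqref{a1.10}. To avoid it I will work locally. Take a small section $S_h(u,x_0)=\{u<\ell+h\}$ of the local graph, where $\ell$ is a supporting affine function, and regard $u$ on $S_h$ as a maximizer of the first boundary value problem \eqref{a1.9} with its own boundary data. By concavity of $A$ this local maximizer is unique, so the penalized second boundary approximation with secondary penalization (Theorem \ref{Thm4.1}) applies on $S_h$. The difficulty is that the boundary data on $S_h$ are only convex, not smooth and uniformly convex. I would therefore approximate the data by smooth uniformly convex data on slightly perturbed domains, and use the stability of maximizers in the compact class $\ol S$ (upper semicontinuity of $A$) to pass to the limit.

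The strict convexity step is the main obstacle. Suppose $\M$ contains a line segment $\ell_0$ through an interior point. By convexity of the local graphs, the contact set $\{u=L\}$ of a supporting hyperplane $L$ is a convex set, and its extreme points cannot lie in the interior of $\M$. I will argue, as in Section 5, that the contact set must extend to $\Gamma$. The smoothness and local uniform convexity of $\Gamma=\p\M_0$, combined with the constraint on the Gauss image, then contradict a flat piece reaching the boundary. The nontrivial part is to propagate the contact set. Near an interior extreme point I would use the dual problem, working with the Legendre transform $u^*$, whose affine area functional gives a priori estimates, together with the upper bound for the Gauss curvature of the maximizer established in Section 2. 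Rescaling sections near the endpoint of $\ell_0$ should produce, in the limit, an affine maximal function on a half-space or a cone. Its affine invariance forces a contradiction with the Gauss curvature bound, which is the same kind of argument as in the two-dimensional case of \cite{[TW2]}, but it now needs the dual estimates to work in every dimension. Finally, I expect the case where the flat piece reaches $\Gamma$ to require care near the boundary: there I would compare with the smooth barrier coming from $\M_0$, using that the Gauss image of $\M$ stays inside that of $\M_0$.
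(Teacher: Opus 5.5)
You name the right ingredients: a localized Theorem \ref{Thm4.1}, the dual problem, and the curvature bound of Lemma \ref{L2.4}. However, the decisive step, excluding a segment $\ell_0$ with interior endpoints, is not supplied.

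\textbf{The interior case.} That extreme points of the contact set cannot be interior does not follow from convexity. For instance, $\mathrm{dist}(x,\ell_0)^2$ is convex, has bounded Hessian, and vanishes exactly on a segment with interior endpoints. This claim is the whole content of the step; for Monge--Amp\`ere it is Caffarelli's theorem (Lemma \ref{L2.2}), which uses two-sided bounds on $\det D^2u$.

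Your sketch for it (rescale near the endpoint and contradict the curvature bound) is the mechanism of Lemma \ref{L5.2}. It works there only because the smooth, uniformly convex boundary data give $C_*|x'|^2\le u\le C^*|x'|^2$ (Claims 3--4). That forces the blow-up limit to satisfy $v\ge|z_n|$ and $v\ge C|z'|^2$, which Remark 2.2 rules out. At an interior endpoint there is no such control, and an upper bound on $K$ alone is compatible with flat pieces, as the example above shows.

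What is missing is the \emph{lower} bound on $\det D^2u$. Equivalently, you need the bound $\det D^2u^*\le c_2$ for the Legendre transform, which Lemma \ref{L2.3} gives in case (ii); it is independent of $\beta_m$, since only $\inf\hat f$ enters. Together with $\det D^2u^*\ge c_1$ from Lemma \ref{L2.4}, Lemma \ref{L2.2} applied to $u^*$ shows that $u^*$ is strictly convex and $C^1$, hence $u$ is strictly convex. No propagation of the contact set to $\Gamma$ is involved.

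The boundary cases are not barrier arguments either. For a segment with both ends on $\Gamma$, the paper uses $u^*\ge|y_n|$ and $u^*\ge c|y'|^2$, the latter coming from $u\le C|x'|^2$ and hence from the smoothness of $\Gamma$. These are played against the same dual bound, again via Remark 2.2.

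\textbf{The localization.} Your localization fails exactly where it is needed. A section through $\ell_0$ contains all of $\ell_0$. So it need not be small, need not lie in a graph neighbourhood from (a2), and the cap $\M_\eps$ need not be a graph.

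The paper instead takes $u_\eps$ to be the lower part of the convex cap (Lemma \ref{L6.1}). It then uses the cone $\phi_h$ of \eqref{a6.8} as boundary data on $\Om=\{u_\eps>\phi_h\}$. With this choice:
\begin{itemize}
\item $u_\eps$ maximizes $A$ in $\ol S[\phi_h,\Om]$;
\item \eqref{phib} holds trivially, because $N_{\phi_h}(\R^n)=N_{\phi_h}(\{0\})$;
\item $|\Gamma_h|=0$ for a.e.\ $h$;
\item $\Om^*=N_{\phi_h}(\Om)$ is convex.
\end{itemize}
Theorem \ref{Thm4.1} then gives \eqref{a4.15} and \eqref{a4.30}. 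These are exactly what make the dual estimate legitimate: $u_m^*$ is defined on sets exhausting $\Om^*$, and $u^*>0$ on $\pom^*$, so Lemma \ref{L2.3} can be applied on a sublevel set of $u^*$.

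Smoothing the data on $\p S_h$ and appealing to stability does not obviously reproduce this. The constraint $N_v(\Om)\subset\overline{N_\phi(\Om)}$ is not stable under perturbation of $\phi$. A limit of perturbed maximizers therefore maximizes over a possibly larger class, which $u$ need not maximize. Nothing in your plan secures \eqref{a4.15} or \eqref{a4.30}.

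Finally, a Gauss image containing no hemisphere need not lie in an open hemisphere; a $300^\circ$ band around the equator is an example. So there are no global gradient bounds, and the local graph representation has to come from (a2).
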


The assumption that the Gauss image of $\M_0$ does not contain any
hemisphere is necessary for the existence of maximizers of \eqref{a1.10} \cite {[TW2]},
but it is not needed for our proof of the regularity.

\subsection{Existence of maximizers} 
First we recall 
the existence and uniqueness of maximizers to the variational problem \eqref{a1.9}.
We extend the affine area functional $A$ to non-smooth convex
functions by setting
\beq \label{a2.1}
A(u)=\int_\Om \big[\det \p^2 u\big]^{\frac{1}{n+2}}, 
\eeq
where $\p^2 u(x)=D^2 u(x)$ when $u$ is twice differentiable at $x$ 
and $\p^2 u(x)=0$ otherwise. 
It was proven in \cite{[Lu], [TW1], [TW2]} that 
the extended functional is concave and upper semi-continuous with
respect to uniform convergence. The existence of a maximizer  $u$ follows
from  the upper semi-continuity. In \cite{[TW2]} we proved that the
singular part of the Monge-Amp\`ere measure $\mu[u]$ vanishes in
$\Om$. Hence the concavity of $A$ also implies if $u_1$ and $u_2$
are two maximizers, then the Monge-Amp\`ere measures
$\mu[u_1]=\mu[u_2]$. Accordingly we have the following existence and uniqueness 
of maximizers.

\begin{Lemma}  \cite{[TW2]} \label{L2.1} 
Let $\Om$ be a bounded open set in $\R^n$
with Lebesgue measure $|\pom|=0$,  $\phi$ a convex, uniformly Lipschitz function
defined in a neighborhood of $\bom$, and $f\in L^\infty(\Om)$. Then
there is a unique maximizer $u$ of \eqref{a1.9}.
\end{Lemma}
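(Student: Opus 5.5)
The proof has two parts: existence by the direct method, and uniqueness from concavity of $A$ together with absolute continuity of the Monge--Amp\`ere measure of a maximizer.

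\emph{Existence.} Since $\phi$ is uniformly Lipschitz, $N_\phi(\Om)$ is bounded, say contained in $B_R(0)$. Every $v\in\ol S[\phi,\Om]$ then satisfies $|Dv|\le R$ almost everywhere, so the class is uniformly Lipschitz. It is also uniformly bounded, because $v=\phi$ on $\pom$ and $\Om$ is bounded. By Arzel\`a--Ascoli, $\ol S[\phi,\Om]$ is compact in $C^0(\bom)$. The constraints are stable under uniform limits: convexity is preserved, the boundary values are preserved, and the normal-image condition $N_v(\Om)\subset\overline{N_\phi(\Om)}$ is closed, since supporting hyperplanes of uniform limits are limits of supporting hyperplanes.

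The functional is bounded above. By the concavity of $t\mapsto t^{1/(n+2)}$ and Jensen's inequality,
\[
A(v)\le |\Om|^{1-\frac1{n+2}}\Big(\int_\Om\det\p^2 v\Big)^{\frac1{n+2}}\le |\Om|^{1-\frac1{n+2}}|B_R|^{\frac1{n+2}},
\]
and the term $\int_\Om fv$ is bounded because $f\in L^\infty$ and the $v$ are uniformly bounded. Take a maximizing sequence and pass to a uniformly convergent subsequence. The upper semicontinuity of $A$ under uniform convergence (cited from \cite{[Lu],[TW1],[TW2]}) and the continuity of the linear term give $J(u)\ge\limsup J(v_k)$, so the limit $u$ is a maximizer.

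\emph{Uniqueness.} Let $u_1,u_2$ be two maximizers. The class is convex (a convex combination of admissible functions remains convex with the same boundary values, and its subgradients are convex combinations of subgradients, which lie in the convex hull of $\overline{N_\phi(\Om)}$, equal to $\overline{N_\phi(\Om)}$ for convex $\phi$), so $w=\frac12(u_1+u_2)$ is admissible. The linear term of $J$ is affine, so
\[
J(w)\ge \tfrac12\big(J(u_1)+J(u_2)\big)
\]
holds provided $A(w)\ge\frac12(A(u_1)+A(u_2))$. Pointwise where all three functions are twice differentiable, which is almost everywhere by Alexandrov's theorem, we have $\p^2 w=\frac12(\p^2 u_1+\p^2u_2)$. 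The map $M\mapsto(\det M)^{1/(n+2)}$ is concave on nonnegative symmetric matrices, and strictly concave off rays: equality forces $\p^2u_1=\p^2u_2$ wherever they are nondegenerate, since the exponent $\frac1{n+2}<\frac1n$ excludes the homogeneous equality case. Maximality of $w$ then forces equality almost everywhere, giving $\det\p^2u_1=\det\p^2u_2$ almost everywhere.

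To upgrade this to uniqueness we need the Monge--Amp\`ere measures to coincide, which needs their singular parts to vanish. This is the main obstacle. I would argue by contradiction, as in \cite{[TW2]}: if $\mu[u]$ had a singular part concentrated on a null set $E$, one locally replaces $u$ near $E$ by a perturbation that spreads the mass of the singular measure into an absolutely continuous one. This is done by solving a Dirichlet problem for the Monge--Amp\`ere equation with a smoothed right-hand side on a small section, which keeps the function in the class. Because $t\mapsto t^{1/(n+2)}$ has infinite slope at $0$, converting singular mass into absolutely continuous density strictly increases $A$, at first order, by more than the change in $\int fu$, which is controlled by $\|f\|_\infty$ times a small sup-norm change. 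This contradicts maximality.

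Hence $\mu[u_1]=\mu[u_2]$ as measures. Both functions also agree on $\pom$. By uniqueness for the Dirichlet problem of the Monge--Amp\`ere equation in the Alexandrov sense, via the comparison principle (this is where $|\pom|=0$ and the boundary continuity are used), we conclude $u_1=u_2$.
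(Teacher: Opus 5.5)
Your route is the paper's: existence from compactness of $\ol S[\phi,\Om]$ and upper semicontinuity of $A$, and uniqueness from concavity of $A$ together with the vanishing of the singular part of $\mu[u]$, quoted from \cite{[TW2]}. That gives $\mu[u_1]=\mu[u_2]$, and then $u_1=u_2$ by the Alexandrov comparison principle. The existence part and the strict-concavity computation are fine.

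The step that does not hold as written is the admissibility of $w=\frac12(u_1+u_2)$. Its subgradients are indeed averages $\frac12(y_1+y_2)$ with $y_i\in N_{u_i}(x)$. However, your claim that the convex hull of $\overline{N_\phi(\Om)}$ equals $\overline{N_\phi(\Om)}$ for convex $\phi$ is false, because gradient images of convex functions need not be convex. This already fails for smooth uniformly convex $\phi$ on a convex domain. Take $\phi=e^{x_1}+e^{x_2}$ on a thin box around the segment from $(-1,1)$ to $(1,-1)$. Its gradient image is a thin neighbourhood of an arc of the hyperbola $y_1y_2=1$, and it misses $(\cosh 1,\cosh 1)$, the midpoint of $(e,e^{-1})$ and $(e^{-1},e)$. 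For nonconvex $\Om$, take $\phi=\frac12|x|^2$ on an annulus.

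So $w\in\ol S[\phi,\Om]$ has to come from the common boundary values, not from the shape of the target set. The paper also uses convexity of the class tacitly when it invokes concavity. One way to supply it, at least for strictly convex $\phi$, is as follows.
\begin{enumerate}
\item Show that the extension of each $u_i$ by $\phi$ to $\Om^\delta$ is convex. A supporting plane $\ell$ of $u_i$ at $x\in\Om$ has slope in $\partial\phi(x')$ for some $x'\in\bom$. Hence $\phi-\ell$ is nondecreasing along rays from $x'$, and it is nonnegative where those rays cross $\pom$, so $\ell\le\phi$ off $\Om$.
\item The average of the two extensions is then convex and equals $\phi$ near $\p\Om^{\delta'}$.
\item A degree argument for its mollified gradient map on $\Om^{\delta'}$ places $N_w(\Om)$ in $N_\phi(\overline{\Om^{\delta'}})$ for every $\delta'$. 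By \eqref{phib}, this gives $N_w(\Om)\subset\overline{N_\phi(\Om)}$.
\end{enumerate}

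There are two smaller points. First, your sketch of why the singular part vanishes is only heuristic. Gluing a Dirichlet solution into a section does not in general produce a convex function: its boundary slopes are dictated by the new right-hand side and need not be compatible with those of $u$ outside the section. So this step should be cited from \cite{[TW2]}, as the paper does, or done with a construction that provably stays in the class. Second, the hypothesis $|\pom|=0$ plays no role in the comparison principle, which only needs $u_1=u_2$ on $\pom$ and continuity on $\bom$. In the paper it enters through Remark 2.1(i), to make the affine area near the boundary uniformly small.
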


\noo {\bf Remark 2.1}.
\newline
(i) In Theorem 2.1, \cite{[TW2]},  we assume that $\pom$ is Lipschitz.
But actually the assumption $|\pom|=0$ suffices for the existence
and uniqueness of maximizers. 
Indeed, if $|\pom|=0$,  
the set $D_\delta:=\{x\in\Om\ |\ \text{dist}(x, \pom)<\delta\}$ has small
Lebesgue measure. But for any measurable set $E$,
we have
\beqn 
 \int_E [\det D^2 u]^{1/(n+2)}
  &=& \int_{\M_E} K^{1/(n+2)}  \label{a2.2}  \\
  &\le& |M_E|^{(n+1)/(n+2)} \int_{\M_\delta} K\nonumber\\
  &\le & \omega_n |M_E|^{(n+1)/(n+2)}\to 0\nonumber
  \eeqn
as $|E|\to 0$, where $\M_E$ is the graph of $u$ over $E$, $K$ is the
Gauss curvature, and $\omega_n$ is the surface area of the unit
sphere $S^n$. 

\noo (ii) 
In  case {\bf c1.2)} of the proof of Theorem \ref{Thm6.1},
the domain $\Om$  may not be smooth.
If either $\pom$ or $\phi$ is not smooth, we assume that $\phi$ is defined in a neighborhood of $\bom$, and satisfies
\beq\label{phib}
 {\bigcap}_{\delta>0} N_\phi(\Om^\delta)=\overline{N_\phi(\Om) } . 
 \eeq
In this paper we denote 
\beq  \label{a2.4}   
\begin{split}
 &\Om^\delta=\{x\in\R^n\ |\ \text{dist}(x, \Om)<\delta\},\\
 &\Om_\delta=\{x\in\Om\ |\ \text{dist}(x, \pom)>\delta\}.
\end{split} \eeq
If both $\pom$ and $\phi$ are smooth, we can smoothly extend $\phi$ to a neighborhood of $\bom$.
Moreover, if $\phi$ is locally uniformly convex, one can preserve the local uniform convexity
for the extended $\phi$ \cite{[TW4]}.

\vskip5pt

In \cite{[TW2]} we proved that a maximizer can locally be approximated
by smooth ones. Therefore by the a priori estimates (see \S2.4
below), a maximizer of \eqref{a1.9} is smooth provided it is strictly convex. 
Consequently, 
establishing strict convexity becomes the central issue to proving regularity.  
The following examples demonstrate that a maximizer is not, in general,  strictly convex.

\vskip5pt

\noo{\bf Example 1}. Let $\phi(x)= (1+x_n^2)|x'|^{2-\frac 2n}$ be
Pogorelov's function, where $n\ge 3$ and $x'=(x_1, \cdots,
x_{n-1})$. Then $f:=\det D^2 \phi\ge 1$ and $f$ is smooth near the
origin. Let $u$ be a solution of $\det D^2 u=1$ in $B_r(0)$ such
that $u=\phi$ on $\p B_r(0)$. Then $u$ is affine maximal but $u$ is
not strictly convex.

\vskip5pt

\noo{\bf Example 2}. Let $n=10$ and $u(x)=\sqrt{|x'|^9+x_{10}^2}$.
Then $u$ is affine maximal but $u$ is not strictly convex \cite{[TW1]}.


\vskip5pt

In this paper we prove that the maximizer of \eqref{a1.9} is strictly convex if
$\phi$ is uniformly convex. The above examples show that such a
condition is necessary. Unlike the Monge-Amp\`ere equation
\cite{[C1]}, vanishing boundary trace is not enough for the strict
convexity for affine maximal hypersurfaces, as shown by Example 2.

\subsection{The Legendre transform} 

The argument in this paper relies heavily on the dual variational problem 
which arises from the Legendre transform.
First let us recall some basic properties of the transform. 
Let $\Om$ and $\Om^*$ be two domains in $\R^n$. Let
$u$ be a convex function in $\Om$. 
The Legendre transform of $u$  is the dual function $u^*$ in $\Om^*$ given by
\beq   \label{a2.3}  
u^*(y) =\sup\{x\cdot y-u(x), \ \ \ x\in\Om\} \ \ \ \forall\ \ y\in\Om^*.
\eeq
Formally \eqref{a2.3} is well-defined for any domain $\Om^*\subset\R^n$. Obviously
\newline
$\text{\hskip10pt}$ (i) if $u\ge v$, then $u^*\le v^*$; therefore
\newline
$\text{\hskip10pt}$  (ii) if $u\ge 0$ and $u=0$ on the segment $\{(0, \cdots, 0, t),
-1<t<1\}$,  then $u^*\ge |y_n|$; 
\newline
$\text{\hskip10pt}$  (iii) if $u_m\to u$, then $u_m^*\to u^*$.

In this paper we will apply the Legendre transform to functions in $\ol S[\phi, \Om]$. 
For any $u\in \ol S[\phi, \Om]$, in order that $u^*$ is well defined in $N_\phi(\Om)$,
we assume that $\phi$ is a convex function defined in $\Om^\delta$ for a small $\delta>0$.
For any $u\in \ol S[\phi, \Om]$, we extend it to
$\Om^\delta$ such that $u=\phi$ in $\Om^\delta-\Om$. 
Then $u^*$ is defined in $N_\phi(\Om^\delta)$, and
$u^*(y)=\phi^*(y)$ for $y\in N_\phi(\Om^\delta)-\Om^*$. 

When $u$ is smooth and uniformly convex, and $\Om^*=Du(\Om)$, the
sup in \eqref{a2.3} is attained at $x\in\Om$, uniquely determined by
$y=Du(x)$, and
$$u^*(y)= x\cdot y-u(x).$$
Moreover, the dual function $u^*$ is smooth, uniformly convex in
$\Om^*$, and
\beq   \label{a2.5}  
(u^*)^*=u,
\eeq
namely the Legendre transform of $u^*$ is $u$ itself. By  property (iii)
above, \eqref{a2.5} is true for nonsmooth functions.

From the relation $y=Du(x)$ we have $x=D u^*(y)$ and
\beq   \label{a2.6}  
\{D^2 u(x)\}=\{D^2 u^*(y)\}^{-1}. 
\eeq
Therefore we have
\beq   \label{a2.7}  
J(u)=J^*(u^*),
\eeq
where $J$ is given in \eqref{a1.5} and
$$J^*(u^*)= A^*(u^*)
      -\frac {1}{n+2}\int_{\Om^*} f(Du^*)(yDu^*-u^*)\det D^2 u^* $$
is the dual functional, where
\beq   \label{a2.8}  
A^*(u^*)= \int_{\Om^*}\big[\det D^2 u^*\big]^{\frac {n+1}{n+2}}.
\eeq
If $u$ is a smooth, convex maximizer of the functional $J$, $u^*$ is
a maximizer of the associated functional $J^*$, and satisfies the
Euler equation of $J^*$,
\beq   \label{a2.9}  
{\begin{split}
L^*[u^*]:  =U^{*ij} w^*_{ij}
          & = - {\Small\text{$\frac {1}{n+2}$}} f(Du^*)\,\det D^2 u^*, \\ 
          \end{split}}
\eeq
where $U^{*ij}$ is the cofactor matrix of $D^2 u^*$ and 
$w^*=[\det D^2 u^*]^{-1/(n+2)}$. 
Equation \eqref{a2.9} is the dual of \eqref{a1.1}.
To verify \eqref{a2.9}, denote
$$E(u^*)=\int_{\Om^*} f(Du^*)(yDu^*-u^*)\det D^2 u^*.$$
We can compute the first variation of $E$ directly,
$$\frac {d}{dt} E(u^*+tw)_{\big| t=0}
           =-\int_\Om w f(Du^*)\det D^2 u^*.$$

If the maximizer $u$ is not smooth, by (iii) above and approximation
by smooth convex functions, we find that \eqref{a2.7} still holds \cite{[TW2]}.
Therefore $u\in\ol S[\phi, \Om]$ is a maximizer of $\sup_{\ol
S[\phi, \Om]} J(u)$ if and only if $u^*$ is a maximizer of
$\sup_{\ol S[\phi^*, \Om^*]} J^*(u)$. Accordingly $u^*$ is a
variational solution to the boundary value problem
\beqn   \label{a2.10}  
 L^*[v] &=& -{\Small\text{$\frac {1}{n+2}$}}  f(Du^*)\,\det D^2 u^*
           \ \ \ \ \ \ \text{in}\ \ \ \Om^*,    \\
       v  &=& \phi^* \ \ \ \ \text{on}\ \ \pom^*,  \nonumber \\
     Dv &=&  D\phi^*\ \ \text{on}\ \ \pom^*. \nonumber  
\eeqn

\subsection{Monge-Amp\`ere measure and strict convexity.} 

We will use the following result of Caffarelli \cite{[C1]}.

\begin{Lemma}  \label{L2.2}  
Let $u$ be a convex function in $\Om$,
vanishing on $\pom$. Suppose there exist positive constants $C_1,
C_2$ such that
\beq   \label{a2.11}  
C_1\le \det D^2 u\le C_2\ \ \ \text{in}\ \ \Om.
\eeq
Then $u$ is strictly convex and $C^1$ smooth, 
and for any subdomain $\Om'\subset\subset\Om$,  
the modulus of convexity of $u$ and 
the modulus of continuity of $Du$ in $\Om'$
depend only on $n, C_1, C_2, \Om',$ and $\Om$.
\end{Lemma}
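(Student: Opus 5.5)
The plan follows Caffarelli's approach: strict convexity via supporting hyperplanes and John's lemma, then $C^1$ regularity by passing to the Legendre transform.

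\textbf{Setup.} By the Alexandrov maximum principle and (a2.11), $u$ is bounded below by $-C$ and $u<0$ in $\Om$. Normalize so that $\Om$ is convex and bounded. This can be assumed in Lemma \ref{L2.2}, since sections of a convex function vanishing on the boundary are convex.

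\textbf{Step 1: strict convexity.} Suppose $u$ is not strictly convex at some $x_0\in\Om'$. Then there is a supporting affine function $\ell$ with $\ell(x_0)=u(x_0)$ such that the contact set $\Sigma=\{u=\ell\}$ contains more than one point. I will first show that $\Sigma$ has no extremal points in the interior of $\Om$. The argument is as follows.
\begin{itemize}
\item Let $x_1$ be an exposed point of $\Sigma$ lying in $\Om$. Tilt $\ell$ slightly to $\ell_\eps=\ell+\eps(\xi\cdot(x-x_1)+h)$ so that the section $S_\eps=\{u<\ell_\eps\}$ is a small convex set containing $x_1$ and compactly contained in $\Om$.
\item Normalize $S_\eps$ by John's lemma, using an affine map $T$ with $T(S_\eps)\sim B_1$.
\item The normalized function $\tilde u=(\det T)^{2/n}(u-\ell_\eps)\circ T^{-1}$ still satisfies (a2.11). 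Applying the Alexandrov estimate and the comparison bound $|\min (u-\ell_\eps)|^n\ge c\,|S_\eps|^2$ (which follows from $\det D^2u\ge C_1$) shows that $u-\ell_\eps$ at $x_1$ is comparable to its minimum over $S_\eps$.
\item On the other hand, $x_1$ lies near the boundary of $S_\eps$ in the normalized picture, where Alexandrov forces $|u-\ell_\eps|$ to be small. This gives a contradiction as $\eps\to0$.
\end{itemize}
Hence every extremal point of $\Sigma$ lies on $\pom$. Then $\Sigma$ is the convex hull of boundary points, so $u=\ell$ on $\Sigma$ with $\ell\le 0$ at those boundary points while $\ell(x_0)=u(x_0)<0$. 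Affinity along a segment through $x_0$ joining two boundary points where $u=0$ forces $\ell$ to have values $0$ at both ends and negative in the middle, which is a contradiction.

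\textbf{Step 2: $C^1$ smoothness.} Pass to the Legendre transform $u^*$. This satisfies $C_2^{-1}\le\det D^2u^*\le C_1^{-1}$ in the interior image. Failure of differentiability of $u$ at $x_0$ means that $u^*$ is affine on the segment $\p u(x_0)$, i.e.\ $u^*$ is not strictly convex there. Running Step 1 on $u^*$ rules this out, once one checks that this segment cannot reach the boundary of the dual domain, using the boundedness of $u$ and the boundary data.

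\textbf{Step 3: quantitative moduli.} Obtain the moduli by compactness. Functions satisfying the hypotheses with fixed $n,C_1,C_2,\Om$ form a compact family under uniform convergence, since (a2.11) is preserved under limits via weak convergence of Monge-Amp\`ere measures. If the modulus of convexity or of continuity of $Du$ degenerated along a sequence, the limit would violate Steps 1--2.

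\textbf{Main obstacle.} The main obstacle is the localization in Step 1, showing that an interior extremal point of the contact set is impossible. This needs a careful comparison of the Alexandrov lower bound near the boundary of a normalized section with the interior minimum estimate. I expect the dimension-dependent exponent balancing ($|\min|^n\sim|S|^2$) to be the delicate part.
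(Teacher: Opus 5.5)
\textbf{Verdict: right architecture, but the central inequality of Step 1 is misjustified as written.} The paper gives no proof of this lemma; it quotes it from Caffarelli [C1]. Your outline follows the architecture of Caffarelli's argument: an exposed point of the contact set, John normalization, Alexandrov's estimate against the paraboloid comparison, duality for $C^1$, and compactness for the uniform moduli.

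\textbf{Where Step 1 breaks.} The Alexandrov estimate and the comparison bound cannot show that $u-\ell_\eps$ at $x_1$ is comparable to $\min_{S_\eps}(u-\ell_\eps)$. After normalization, the comparison bound gives lower bounds for $|u-\ell_\eps|$ only well inside $T(S_\eps)$. Alexandrov gives only upper bounds near $\p T(S_\eps)$, which is exactly where $x_1$ sits. If both of your bullets followed from those two estimates, they would contradict each other, which is absurd. Your ``main obstacle'' paragraph, with its ``Alexandrov lower bound near the boundary'' and delicate exponent balancing, reflects the same confusion.

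\textbf{The missing geometric input.} It comes from $u\ge\ell$ and from exposedness. Take $\Sigma\subset\{\xi\cdot(x-x_1)\le0\}$ with $\Sigma\cap\{\xi\cdot(x-x_1)=0\}=\{x_1\}$, and take $h$ small. Then
\begin{itemize}
\item $S_\eps\subset\{-h<\xi\cdot(x-x_1)<a_\eps\}$, where $a_\eps:=\sup_{S_\eps}\xi\cdot(x-x_1)\to0$ by exposedness;
\item $S_\eps$ contains points of $\Sigma$ with $\xi\cdot(x-x_1)$ arbitrarily close to $-h$; this is where $\Sigma\ne\{x_1\}$ is used;
\item $(u-\ell_\eps)(x_1)=-\eps h$, while $u-\ell_\eps\ge-\eps(h+a_\eps)$ on $S_\eps$;
\item since ratios along lines are affine invariant, $\mathrm{dist}(Tx_1,\p T(S_\eps))\le 2n\,a_\eps/(h+a_\eps)$.
\end{itemize}
Only now do the two estimates enter: Alexandrov uses $\det D^2u\le C_2$ and the comparison uses $\det D^2u\ge C_1$. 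Together they give
\[
\frac{h}{h+a_\eps}\le C(n)\Big(\frac{C_2}{C_1}\Big)^{1/n}\Big(\frac{a_\eps}{h+a_\eps}\Big)^{1/n}\longrightarrow 0 \quad (\eps\to0),
\]
which is an immediate contradiction with no exponent balancing needed. Your sketch omits exactly this geometry.

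\textbf{Two smaller corrections.}
\begin{itemize}
\item At the end of Step 1, $x_0$ need not lie on a segment joining two extremal points. Argue instead that $\ell$ vanishes at every extremal point of $\Sigma$, hence on $\Sigma=\mathrm{conv}(\mathrm{ext}\,\Sigma)\ni x_0$, contradicting $\ell(x_0)=u(x_0)<0$.
\item In Step 2, $\p u(x_0)$ stays inside the region where $C_2^{-1}\le \det D^2u^*\le C_1^{-1}$ because of Step 1, not because $u$ is bounded.
\begin{itemize}
\item Since $u\in C(\bom)$ and the supporting planes at $x_0$ touch $\bom$ only at $x_0$, every minimizer of $u-p\cdot x$ over $\bom$ lies near $x_0$ when $p$ is near $\p u(x_0)$. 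Hence $\p u(x_0)$ is compactly contained in the interior of $\p u(\Om)$.
\item The dual upper bound in the Alexandrov sense needs $|\p u(N)|=0$ for the non-differentiability set $N$. This follows from $\mu_u\le C_2\,dx$.
\end{itemize}
With these in place, the contact set of $u^*$ with $y\mapsto x_0\cdot y-u(x_0)$ is exactly $\p u(x_0)$, and the corrected Step 1 applies to it.
\end{itemize}
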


Obviously the condition that $u$ vanishes on $\pom$ can be relaxed
to that $u$ is linear on $\pom$, as the Monge-Amp\`ere equation is
invariant after subtracting a linear function. 
Note that Lemma \ref{L2.2} applies to nonsmooth convex functions as well. 
In the nonsmooth case the Monge-Amp\`ere
operator $\det D^2 u$ is replaced by the {\it Monge-Amp\`ere
measure} $\mu[u]$, which is defined by
\beq   \label{a2.12}  
\mu[u](E)= |N_u(E)|, 
\eeq
where $|\cdot |$ denotes the Lebesgue measure and $N_u$ is the
normal mapping of $u$,
$$N_u(x_0)=\{y\ |\ \ u(x)\ge y\cdot x+u(x_0)\ \forall\ x\in\Om\}.$$
For any subset $\omega\subset\Om$, we denote $N_u(\omega)=
\bigcup_{x\in\omega}N_u(x)$.  
An important property of the Monge-Amp\`ere measure is the 
weak convergence, namely if a sequence of convex function
$u_m\to u$ pointwise in $\Om$, 
then $\mu[u_m]$ converges to $\mu[u]$ weakly as measures \cite{[P]}.
In the following we use $\det D^2 u$ to denote the measure
$\mu[u]$ even if $u$ is nonsmooth.

\vskip5pt

The modulus of convexity for a convex function $u$ is defined as
follows \cite{[TW2]}. For any point $y\in \Om$ and constant $h>0$, denote
\beq   \label{a2.13}  
S^0_{h, u}(y)=\{x\in\Om\ {|}\  u(x)<\L_y(x)+h\},
\eeq
where $\L_y$ is a tangent plane of $u$ at $y$. Then the {\it modulus
of convexity} of $u$ is a nonnegative function, given by
$$\rho_u (r) = \inf_{y\in\Om} \rho_{u, y}(r), \ \ \ \ r>0,$$
where
$$\rho_{u, y}(r) = \sup
 \{h\ge 0\ \big{|} \ S^0_{h,u}(y)\subset B_r(y)\}  $$
if there exists $h\ge 0$ such that $S^0_{h,u}(y)\subset B_r(y)$;
or $\rho_{u, y}(r) = 0$ otherwise.  A convex function $u$ is {\it
strictly convex} if $\rho_u(r)>0$ for all $r>0$.

\vskip5pt

\noo{\bf 2.4. A priori estimates}
\newline
We will use a basic estimate from \cite{[TW1], [TW2]}, 
for the more general equation
\beqn 
  && U^{ij}w_{ij}=f,   \label{a2.14}  \\
  && w=[\det D^2 u]^{\th-1},  \no 
 \eeqn
where $(U^{ij})$ is the cofactor matrix of the Hessian matrix $D^2
u$, and $\th\in (0, 1)$ is a constant. For later applications,
it suffices to consider two cases,
\\[5pt]
$\text{\hskip10pt}$ Case (i):  $\th=\frac {1}{n+2}$, $f\le C^*$ for some constant $C^*$;
\\
$\text{\hskip10pt}$ Case (ii): $\th=\frac {n+1}{n+2}$, $f=-{ \frac {1}{n+2} } \hat f\,\det
D^2 u$ and $\hat f\ge -C^*$.

\begin{Lemma}  \label{L2.3}  
Let $u\in C^4(\Om)\cap C^{0,1}(\bom)$ be a
locally uniformly convex solution of \eqref{a2.14} with $u=0$ on $\pom$.
Suppose $f$ satisfies either (i) or (ii) above. Then, for any point
$y\in\Om$, we have the estimate
\beq  \label{a2.15}  
\det D^2 u(y)\le C, 
\eeq
where $C$ depends on $n, C^*$, $\text{dist} (y, \pom)$, $\sup_\Om|Du|$, and $|\inf_\Om u|$.
\end{Lemma}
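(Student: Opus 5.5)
This is the standard Trudinger--Wang determinant estimate. I would prove it by a maximum principle applied to an auxiliary function. Set $d=\det D^2u$, so that $w=d^{\theta-1}$. At a point where $u<0$, the quantity to bound is the decay of $w$ (case (i): $w$ small means $d$ large, since $\theta-1<0$), localized by a power of $-u$. Concretely, I would consider
$$
z=\log w-\beta\log(-u)-A\,|Du|^2 ,
$$
or, in the multiplicative form used in \cite{[TW1]}, $z=-\log w+\beta\log(-u)+A|Du|^2$. Here $\beta$ and $A$ are constants to be chosen. Since $u=0$ on $\pom$, the term $\beta\log(-u)\to-\infty$ at the boundary, so $z$ attains an interior maximum at some $x_0$. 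At $x_0$ we have $Dz=0$ and $U^{ij}z_{ij}\le0$.

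\emph{Computation at the maximum.} I would use the operator $U^{ij}\p_{ij}$, which is divergence free. The relevant identities are
$$
U^{ij}u_{ij}=nd,\qquad U^{ij}(|Du|^2)_{ij}\ge 2\,d\,\Delta u ,
$$
together with $U^{ij}w_{ij}=f$. Expanding $U^{ij}(\log w)_{ij}=f/w-U^{ij}w_iw_j/w^2$, the gradient term is controlled through $Dz=0$. That condition expresses $Dw/w$ in terms of $\beta Du/u$ and $A\,D(|Du|^2)$, and yields
$$
U^{ij}\frac{w_iw_j}{w^2}\le C\beta^2\frac{U^{ij}u_iu_j}{u^2}+CA^2\,U^{ij}(|Du|^2)_i(|Du|^2)_j .
$$
I expect these terms to be the main obstacle. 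They involve $U^{ij}$ contracted against gradients, which is not directly comparable to $d\Delta u$. The trick in \cite{[TW1]} is to pick coordinates at $x_0$ in which $D^2u$ is diagonal. Then $U^{ii}=d/\lambda_i$, and $(|Du|^2)_i=2\lambda_iu_i$, so $U^{ij}(|Du|^2)_i(|Du|^2)_j=4d\sum\lambda_iu_i^2\le 4d\,|Du|^2\Delta u$. The term $\beta^2 U^{ij}u_iu_j/u^2=\beta^2 d\sum u_i^2/(\lambda_iu^2)$ must be absorbed against the good term $\beta\,U^{ij}u_{ij}/(-u)$ plus the $A\,d\Delta u$ term. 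This works after using $Dz=0$ again to trade $u_i/u$, provided $\beta$ is fixed appropriately in terms of $\theta$ (this is where $\theta<1/2$ matters in case (i)) and $A$ is small relative to $1/\sup|Du|^2$.

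\emph{Conclusion.} After absorption, one obtains at $x_0$ an inequality of the form $c\,d\,\Delta u\le C(\beta d/|u|+|f|/w)$. Since $f\le C^*$ and the positive part of $f/w$ enters with the favorable sign, this bounds $\Delta u$ and hence $d\le(\Delta u/n)^n$ at $x_0$, in terms of $\sup|Du|$, $|\inf u|$, $n$ and $C^*$. The bound on $z(x_0)$ then gives the bound at $y$, with a factor $|u(y)|^{-\beta}$. Here $|u(y)|$ is bounded below by convexity in terms of $\text{dist}(y,\pom)$ and the Lipschitz bound: since $u=0$ on $\pom$ and $u\not\equiv0$, a lower bound $|u(y)|\ge c\,\text{dist}(y,\pom)$ would follow from a positive lower bound on $|\inf u|$, via a cone comparison.

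\emph{Case (ii).} This case is dual, the $\theta=\frac{n+1}{n+2}$ equation \eqref{a2.9}. Here $w=d^{-1/(n+2)}$ and the right-hand side is $-\frac1{n+2}\hat f d$ with $\hat f\ge -C^*$. I would run the same auxiliary function. The right-hand side contributes $f/w=-\frac1{n+2}\hat f\,d^{1+1/(n+2)}$, which has the right sign up to $C^*d^{1+1/(n+2)}$. This must be dominated by the leading good term, which is of order $d\Delta u\gtrsim d^{1+1/n}$. Since $1+\frac1n>1+\frac1{n+2}$, the good term dominates for large $d$, and the same conclusion follows.
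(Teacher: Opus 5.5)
Your auxiliary function is the paper's up to an overall sign, and your architecture matches it: interior extremum, a bound at $x_0$, then transfer to $y$ through the weight $(-u)^\beta$. But the computation at $x_0$, which is the whole content of the lemma, does not go through as you describe it.

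\textbf{First, a false inequality.} The claim $U^{ij}(|Du|^2)_{ij}\ge 2d\,\Delta u$ does not hold. In fact $U^{ij}(|Du|^2)_{ij}=2d\,\Delta u+2u_k\partial_k d$, and the third-order term is unsigned; it is the main term to control. The paper writes $\partial_k d/d=u^{ij}u_{ijk}=-\frac{1}{1-\theta}\frac{w_k}{w}$ and then substitutes $Dz=0$, i.e. $\frac{w_k}{w}=\beta\frac{u_k}{u}+2Au_lu_{lk}$. This produces two things:
\begin{itemize}
\item cross terms of order $A\beta|Du|^2/|u|$, which are harmless after multiplying by $|u|$;
\item an $A^2$-term. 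Combined with the $A^2$-term coming from the gradient term, it nets to $\frac{4A^2\theta}{1-\theta}u_{ij}u_iu_j$ (after dividing by $d$), with the unfavorable sign.
\end{itemize}
That $A^2$-term is absorbed by $A\Delta u$ precisely because $A=\frac{1-\theta}{4\theta\sup|Du|^2}$. This is where the smallness of $A$ is needed.

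\textbf{Second, inverted signs on the gradient terms.} The term $U^{ij}w_iw_j/w^2$ enters with the favorable sign, so an upper bound on it is useless. You must substitute $Dz=0$ into it exactly. Its piece $\beta^2U^{ij}u_iu_j/u^2$ then cancels the unfavorable $\beta U^{ij}u_iu_j/u^2$ coming from $\partial_{ij}\log(-u)$. What remains is $\beta(\beta-1)U^{ij}u_iu_j/u^2$, which has the good sign once $\beta\ge 1$.

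Your proposed absorption of $\beta^2 d\sum u_i^2/(\lambda_iu^2)$ into $\beta nd/|u|+Ad\,\Delta u$ cannot work, since some $\lambda_i$ may be arbitrarily small. Moreover, $\beta nd/|u|$ is not a good term. It is the unfavorable term that survives, and in case (i) it yields $A|u|\Delta u\le C(1+|Du|^2)+C^*|u|d^{-\theta}$.

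\textbf{Smaller corrections.}
\begin{itemize}
\item The inequality above bounds $|u|\Delta u$ at $x_0$, not $\Delta u$. Converting it into a bound for $z(x_0)$ requires $\beta\ge n(1-\theta)$. The paper's choice $\beta=\frac{n(n+1)}{n+2}$ satisfies both this and $\beta\ge1$, in both cases.
\item No restriction $\theta<\frac12$ is involved. In case (ii) one has $\theta>\frac12$, and the net cross term even has the good sign.
\item Only $f\le C^*$ may be used, because the positive part of $f$ is the unfavorable one. So $|f|/w$ should not appear in your final inequality.
\end{itemize}
Once these are fixed, your case (ii) comparison of $d^{1/n}$ against $d^{1/(n+2)}$ is exactly the paper's.
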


\noo{\it Proof.} Let
\beq   \label{a2.16}  
z=\log \frac w{(-u)^\beta} -A|Du|^2, 
\eeq
where $\beta= \frac{n(n+1)}{n+2}$, $A=\frac {1-\th}{4\th \sup_\Om
|Du|^2}$. Then $z$ attains its minimum at an interior point
$x_0\in\Om$. At $x_0$ we have $z_i=0$, namely
\beq  \label{a2.17}  
\frac {w_i}w =\beta \frac {u_i}u +2Au_ku_{ki},
\eeq
and $\{z_{ij}\}\ge 0$, where
$$z_{ij}=\frac {w_{ij}}w-\frac {w_iw_j}{w^2}-\beta\frac
 {u_{ij}}u +\beta\frac {u_iu_j}{u^2}-2Au_{ki}u_{kj}-2Au_ku_{kij}. $$
Hence
\beqs
 0 & \le & u^{ij}z_{ij} \\
& = & \frac {f}{d^\th} -\frac {\beta n}{u} -
    \frac {u^{ij}w_iw_j}{w^2} + \frac {\beta u^{ij} u_iu_j}{u^2}
    -2Au^{ij}u_{ki}u_{kj}+\frac {2A}{1-\th}\frac {u_k w_k}{w}\\
& = &\frac {f}{d^\th} -\frac {\beta n}{u}
    -\beta(\beta-1)\frac {u^{ij} u_iu_j}{u^2}
    -2A\Delta u +\frac {4A^2 \th}{1-\th} u_{ij}u_iu_j
    -2\beta A \frac {1-2\th}{1-\th} \frac {|Du|^2}{u} \\
& \le & \frac {f}{d^\th} -A\Delta u -\frac {\beta n}{u}
    +2\beta A \frac {|Du|^2}{|u|}. 
\eeqs
In the above we have used \eqref{a2.17} and the formula
$$u^{ij} u_{kij} = -\frac {1}{1-\th}\frac {w_k}w,$$
where $(u^{ij})=\frac 1d (U^{ij})$ is the inverse matrix of $D^2 u$
and $d=\det D^2 u$.

In Case (i), we obtain
$$ A |u| \Delta u \le C(1+|Du|^2)+\frac{C^*}{d^\th}. $$
Hence $z(x_0)\ge -C$ and so $z(x)\ge z(x_0)\ge -C$ for any
$x\in\Om$.

In Case (ii),  we have
$$A |u| \Delta u\le C(1+|Du|^2)+C^* d^{1/(n+2)}.$$
We also obtain \eqref{a2.15}. $\square$

The above proof was already contained in \cite{[TW2]}. 
Our new contribution is Case (ii) which will be needed later.
We include the full proof here to clarify how the constant C depends on the right hand
side function $f$. In particular the estimate \eqref{a2.15} is independent
of $\inf f$ in Case (i), and independent of $\sup\hat f$ in case
(ii).

By the same proof, we have a variant of Lemma \ref{L2.3} as follows.

\begin{Lemma}  \label{L2.3a} 
Let $u\in C^4(\Om)\cap C^{0,1}(\bom)$ be a
locally uniformly convex solution of \eqref{a2.14} in $\Om$.
Suppose that\\
\centerline{ $\pom=\Gamma_1\cup\Gamma_2$, \ 
$u=0$ on $\Gamma_1$, \ and $\det D^2 u\le C_0$ on $\Gamma_2$.}
Suppose $f$ satisfies either (i) or (ii) above. Then we have the estimate \eqref{a2.15}, 
where the constant $C$ depends on $n, C^*$, $C_0$, $\text{dist} (y, \pom)$, $\sup_\Om |Du|$,
and $\sup_\Om |u|$.
\end{Lemma}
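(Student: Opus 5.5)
We would rerun the proof of Lemma \ref{L2.3} with the same auxiliary function, modified only so that it is well defined when $u$ does not vanish on all of $\pom$. Set $M=\sup_\Om |u|$ and put $\hat u = u - M - 1$, so that $\hat u\le -1<0$ in $\bom$ and $|\hat u|\le 2M+1$. On $\Gamma_1$ we have $u=0$, i.e.\ $\hat u = -(M+1)$. To keep the barrier effect at $\Gamma_1$, we would instead use a cutoff: let $\eta\ge 0$ be a function vanishing on $\Gamma_1$ and positive in $\Om$. The simplest choice is $\eta=-u$ when $u<0$ in $\Om$; in general, take $\eta = (-u)$ on the region where $u<0$ near $\Gamma_1$, which is the natural reading since $u$ is convex with $u=0$ on $\Gamma_1$. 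Concretely, we consider
\[
z=\log \frac{w}{(-u)^\beta}-A|Du|^2 \quad\text{on } \{u<0\},
\]
with $\beta,A$ as in \eqref{a2.16}. We regard $\Gamma_2$ as the part of the boundary where $z$ is controlled from below by the hypothesis rather than by the blow-up of $(-u)^{-\beta}$.

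The key step is to locate the minimum of $z$. If it is attained at an interior point $x_0$, the computation in the proof of Lemma \ref{L2.3} applies verbatim, since it used only the equation \eqref{a2.14} at $x_0$, $z_i=0$ and $u^{ij}z_{ij}\ge0$. In Case (i) or Case (ii) it gives $A|u|\Delta u\le C(1+|Du|^2)+C^*d^{\pm\theta}$ at $x_0$. By the arithmetic–geometric mean inequality $\Delta u\ge n d^{1/n}$, this yields an upper bound on $d(x_0)$, hence a lower bound on $w(x_0)$, with constants depending on $n,C^*,\sup|Du|,\sup|u|$. The minimum cannot occur on $\Gamma_1$, since $(-u)^{-\beta}\to\infty$ there while $w>0$, so $z\to+\infty$.

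On $\Gamma_2$, the hypothesis $\det D^2u\le C_0$ gives $w=d^{\theta-1}\ge C_0^{\theta-1}$. Since $|u|\le M$ and $|Du|$ is bounded, this yields $z\ge -C$ there. The same lower bound holds on the part of $\p\{u<0\}$ interior to $\Om$, where $u=0$ and $z=+\infty$. Hence $z\ge -C$ throughout $\{u<0\}$.

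The conclusion then follows. At a point $y$ with $u(y)<0$, we get $\log w(y)\ge -C+\beta\log|u(y)|$. Since $u$ is convex and vanishes on $\Gamma_1$, $|u(y)|$ is bounded below in terms of $\text{dist}(y,\pom)$. This gives \eqref{a2.15}, with $C$ depending additionally on $C_0$ and $\sup|u|$.

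The main obstacle is making the region and the weight precise when $u$ is not negative in all of $\Om$. The normalization must be arranged so that the weight degenerates exactly on $\Gamma_1$ and nowhere else. One also needs a lower bound on $|u(y)|$ at the point of interest, which must come from convexity and the zero boundary values on $\Gamma_1$. If $u$ is nonnegative near $y$, we would instead replace $(-u)$ by $(\ell-u)$ for a suitable affine function $\ell$ vanishing on $\Gamma_1$ near $y$. This keeps $w$ and the equation unchanged, because subtracting a linear function does not affect them, and the constants remain controlled by $\sup|u|$ and $\sup|Du|$.
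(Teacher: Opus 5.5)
Your core mechanism is the paper's. The paper proves this lemma ``by the same proof'' as Lemma~\ref{L2.3}: it keeps $z=\log\frac{w}{(-u)^\beta}-A|Du|^2$, leaves the interior computation unchanged, and uses $z=+\infty$ where $u=0$, in particular on $\Gamma_1$. The one new case, a boundary minimum on $\Gamma_2$, is handled exactly as you do, via $w\ge C_0^{\theta-1}$ and $(-u)^{-\beta}\ge(\sup_\Omega|u|)^{-\beta}$.

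The detours through $\hat u=u-M-1$ and a general cutoff $\eta$ should be dropped. $\hat u$ destroys the barrier on $\Gamma_1$. A general $\eta$ cannot replace $-u$, because the computation uses $u^{ij}(-u)_{ij}=-n$, and a general $\eta$ leaves the uncontrolled term $u^{ij}\eta_{ij}$; only $\eta=\ell-u$ with $\ell$ affine keeps the structure.

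There is also a small correction at the interior minimum. The inequality there controls $|u(x_0)|\,d(x_0)^{1/n}$, not $d(x_0)$ itself, which may be large where $|u|$ is small. That product is exactly what bounds $z(x_0)$ from below: in Case (i), $\beta=n(1-\theta)$ gives $w|u|^{-\beta}=(|u|d^{1/n})^{-\beta}$. In Case (ii) the extra term is $C^*d^{1/(n+2)}$, not $C^*d^{\pm\theta}$.

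The genuine gap is in your last paragraph. It is false that $|u(y)|$ is bounded below ``because $u$ is convex and vanishes on $\Gamma_1$''. Nothing in your setup excludes $u(y)\ge 0$, and even when $u<0$ in $\Omega$, $|u(y)|$ is tiny whenever $\sup_\Omega|u|$ is. What the lemma and the paper's one-line proof tacitly use is $u<0$ in $\Omega$, i.e.\ $u\le 0$ on $\Gamma_2$. This is the situation in Lemma~\ref{L2.4}, where $\Omega$ is a piece of a sublevel set and the lemma is applied to $u$ minus the level.

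Under that assumption, the supporting plane at $y$ (global, by the paper's definition of convexity) gives $|Du(y)|\le |u(y)|/\text{dist}(y,\partial\Omega)$. Hence $\sup_\Omega|u|\le |u(y)|\,(1+\text{diam}\,\Omega/\text{dist}(y,\partial\Omega))$. So the lower bound on $|u(y)|$ comes from a lower bound on $\sup|u|$, the same implicit point as in Lemma~\ref{L2.3}.

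Your proposed repair with $\ell-u$, $\ell$ affine ``vanishing on $\Gamma_1$ near $y$'', does not work. $\Gamma_1$ is not planar in general. Wherever $\ell>0=u$ on $\Gamma_1$, the set $\{u<\ell\}$ reaches $\Gamma_1$, where $z$ is finite and $w$ is uncontrolled, so the minimum of $z$ can sit there with no bound. The device would need $\ell\le 0$ on all of $\Gamma_1$ and $\ell(y)>u(y)$. Such an $\ell$ need not exist, and even when it does, the constant depends on the position of $y$ relative to $\Gamma_1$ rather than on the listed quantities.
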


\noo{\bf Remark 2.2}.  Lemma \ref{L2.3} will be used in the following scenario. 
Let $u$ be a convex function defined near the origin, satisfying 
\bei
\item [(a)] $u(0)=0$, and 
\item [(b)] $u(x)\ge C\max(|x'|^2, |x_n|)$ for some positive constant $C>0$. 
\eei
Conditions (a) and (b) imply that
the Monge-Amp\`ere measure $\mu_u$ cannot be a bounded function near
the origin. Indeed, under (a) and (b), one can easily verify that
$$|N_u(D_\eps)|/|D_\eps|\to\infty\ \ \text{as} \ \ \eps\to 0,$$
where $D_\eps=\{x\in\R^n\ |\ u(x)<\eps\}$. 
When $u$ is a maximizer of \eqref{a1.9},  by approximation 
it is in contradiction with the a priori estimate \eqref{a2.15}. 
Hence the maximizer $u$ cannot satisfy (a) and (b) simultaneously. 

\vskip5pt

From Lemma \ref{L2.3} and the affine invariance, 
we can also derive an upper bound for the Gauss curvature of affine maximal hypersurfaces.
Let $\M$ be an affine maximal hypersurface such that 
\bei
\item [(i)] the origin $0\in\M$, 
\item [(ii)] $\M$ is bounded and contained in the strip $\{0\le x_{n+1}\le 1\}$, and
\item [(iii)]  the boundary $\p \M\subset \{x_{n+1}=1\}$.
\eei
Let $Q_\M$ denote the convex domain enclosed  by $\M$ and the hyperplane $\{x_{n+1}=1\}$.
By making a linear transformation, 
we may assume that $Q_\M$ is {\it normalized} (i.e. its minimuim ellipsoid is a ball and (i) - (iii) hold).
Then we have the following upper bound for the Gauss curvature of $\M$.

\begin{Lemma} \label{L2.4}
Under the above normalization, there exists a constant $h_0\in (0, \frac 12)$ such that
the Gauss curvature of $\M$ at $p$, $K_\M (p)$, satisfies the estimate
\beq\label{KC}
K_\M (p)\le C\ \ \forall\ p=(p_1, \cdots, p_n, p_{n+1})\in\M\ \text{with}\ 0\le p_{n+1}\le h_0,
\eeq
where $h_0$ and $C$ depend only on $n$.
\end{Lemma}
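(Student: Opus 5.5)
The plan is to represent $\M$ locally as a graph over a suitable hyperplane, and to apply Lemma \ref{L2.3} with $\th=\frac1{n+2}$, $f=0$ (Case (i)) to the height function minus a constant. Affine invariance lets us choose a convenient affine frame at each point. Since $Q_\M$ is normalized, its minimum ellipsoid is a ball, so $B_{c_n}(z_0)\subset Q_\M\subset B_{C_n}(z_0)$ for some center $z_0$ and dimensional constants $c_n, C_n$. Together with (i)–(iii), this gives uniform geometric control: $\M$ lies in $B_{C_n}$, the section $\{x_{n+1}=1\}\cap Q_\M$ contains an $(n-1)$-ball of radius comparable to $1$, and the height of $Q_\M$ is $1$.

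First I fix a point $p\in\M$ with $p_{n+1}\le h_0$, and let $\ell_p$ be the tangent hyperplane of $\M$ at $p$. By convexity, $\M$ lies on one side of $\ell_p$. Consider the section $S_p=\{q\in\M: \text{dist}(q,\ell_p)<t\}$ for a suitable height $t$ comparable to $1$, measured in the direction normal to $\ell_p$. I want to show three things.
\begin{itemize}
\item[(a)] $\ell_p$ makes a uniformly bounded angle with $\{x_{n+1}=0\}$, so that $\M$ near $p$ is a graph $x_{n+1}=u(x')$ with $|Du|\le C$.
\item[(b)] For small $h_0$, the section of $u$ over its tangent plane at $p$ of height $t$ has boundary lying in the interior of $\M$, not touching $\p\M$.
\item[(c)] $p$ stays at a distance comparable to $1$ from the boundary of that section.
\end{itemize}
For (a), the region $Q_\M$ contains a ball of radius $c_n$ and has total height $1$. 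A supporting hyperplane at a point with $p_{n+1}\le h_0$ must therefore pass below most of that ball, which forces the slope to be bounded by a dimensional constant once $h_0$ is small. The same reasoning shows that the set $\{q\in\M:\ q_{n+1}< 2h_0\}$ projects to a region whose inner distance to the projection of $\p\M$ is bounded below, since $\p\M\subset\{x_{n+1}=1\}$.

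Next I take $\Om_p=\{x': u(x')<L_p(x')+t\}$, where $L_p$ is the tangent affine function at $p$, and $t$ is chosen small depending only on $n$ so that $\Om_p$ stays away from $\p\M$. Then $v=u-L_p-t$ vanishes on $\p\Om_p$, is convex, and satisfies the affine maximal equation. Since subtracting an affine function preserves \eqref{a1.1} with $f=0$, Lemma \ref{L2.3} applies with $C^*=0$. The bounds on $\sup|Dv|$ and $|\inf v|\le t$ come from step (a). The key quantity is $\text{dist}(p',\p\Om_p)$, which must be bounded below. By convexity, $\Om_p$ contains the set where $v<-t/2$, and this ball-like region around $p'$ has size controlled by the Lipschitz bound: $|Dv|\le C$ gives $\text{dist}(p',\p\Om_p)\ge t/C$. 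Then \eqref{a2.15} bounds $\det D^2u(p')$. The Gauss curvature is $K=\det D^2u/(1+|Du|^2)^{(n+2)/2}\le \det D^2 u$, so \eqref{KC} follows.

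The main obstacle is step (b): showing that the sublevel set $\Om_p$ with height $t$ independent of $p$ is compactly contained in the projected interior of $\M$. If $\M$ is very flat near $p$, the section could extend far and reach the boundary $\{x_{n+1}=1\}$. I would handle this using $u\le 1$ on the whole graph, $u\ge0$, and $u(p')\le h_0$, together with the slope bound. The tangent plane at $p$ lies below height $\approx h_0+C|x'-p'|$, so the points where $u<L_p+t$ have $u<2h_0+t+C\,\text{diam}$. This does not immediately prevent reaching $x_{n+1}=1$, so instead I would choose the section relative to the horizontal plane through $p$ when the slope is small, and argue by the normalization: a section touching $\p\M$ would force $Q_\M$ to be thin in some direction, contradicting the inner ball. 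If this direct geometric argument fails, the fallback is a compactness argument. The class of normalized convex bodies is compact, so a contradiction sequence with the section touching the boundary converges to a limit in which $\M$ contains a flat piece through a point at height $0$ reaching $\{x_{n+1}=1\}$. That is impossible, since it would give a segment in $\M$ going from height $0$ to height $1$ within the supporting hyperplane, contradicting (ii) and the tangency.

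However the flatness issue in step (b) is resolved, it fixes $t$ and $h_0$, both depending only on $n$.
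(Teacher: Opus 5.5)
There is a genuine gap, and it sits in step (a). The argument you give for (a) uses only convex geometry (the inner ball, height $1$, the bottom point at $0$), and as a statement about normalized convex bodies it is false. Take $Q_\M$ to be a slightly smoothed cylinder $\{|x'|\le r,\ 0\le x_{n+1}\le 1\}$, with $r$ chosen so that the minimum ellipsoid is a ball. Points on the side wall at height $\le h_0$ have nearly vertical tangent planes, however small $h_0$ is. A supporting hyperplane there does not ``pass below'' the inner ball; it sits beside it. The same example breaks your claim that the projection of $\M\cap\{x_{n+1}<2h_0\}$ stays a fixed distance from the projection of $\p\M$: the two projections touch.

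Your direct approach needs, around each low point, a section of fixed height that is a bounded-gradient graph compactly inside $\M$. Normalization alone cannot give this. Even at a flat bottom point, the section $\{u<L_p+t\}$ runs into the wall, so $\sup_{\Om_p}|Du|$ in Lemma \ref{L2.3} is uncontrolled. Your compactness fallback does not repair this. The limit you arrive at is a segment of $\M_0$ in a supporting hyperplane, running from height $0$ to $\p\M_0\subset\{x_{n+1}=1\}$, such as a vertical wall. That is fully compatible with (i)--(iii) and with normalization, so there is no contradiction with ``(ii) and the tangency''.

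The missing idea is that these degenerate low configurations must be controlled by affine maximality, not by geometry. The paper argues by contradiction. Take $\M_k$ with $K_{\M_k}(p_k)\ge k$ at heights $<1/k$, pass to a Hausdorff limit $\M_0$, and apply \eqref{a2.15} to $\M_k$ in well-chosen frames.
\begin{itemize}
\item If $\M_0$ is strictly convex at $0$, or its part below height $h_0$ is a graph with bounded gradient, Lemma \ref{L2.3} applies directly.
\item Otherwise, study the contact set $T=\M_0\cap\{x_{n+1}=0\}$. At each extreme point of $T$, translate and rotate coordinates in $\R^{n+1}$ so that $\M_0$ is locally a convex graph whose gradient is bounded in terms of $n$, and apply Lemma \ref{L2.3} there. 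This is where affine maximality controls wall and corner behaviour of the kind in Remark 2.2.
\item The bound is then propagated along the rest of $\p T$ and into the low region via Lemma \ref{L2.3a}, using the curvature bound already obtained on part of the boundary as the condition on $\Gamma_2$.
\end{itemize}
Some version of this PDE input in rotated frames is what you need in place of step (a).
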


\noo{\it Proof}. 
If $\M$ is uniformly convex, \eqref{KC} follows from \eqref{a2.15}, 
for a constant $C$ depending also on the modulus of convexity of $\M$.

To obtain an upper bound independent of the modulus of convexity of $\M$, we argue by contradiction.
If \eqref{KC} is not true, then there is a sequence $\M_k$ and a sequence of points $p_k\in \M_k\cap\{x_{n+1}<\frac 1k\}$ 
such that $K_{\M_k} (p_k)\ge k$.
Since $\M_k$ is convex and is normalized, we may assume that $\M_k\to \M_0$ (in the sense of Hausdorff).
We prove \eqref{KC} for $\M_0$, near $\M_0\cap\{x_{n+1}=0\}$. 
The proof also applies to $\M_k$ for $k$ sufficiently large, which leads to a contradiction.
\bei
\item [(I)]  
If $\M_0$ is strictly convex at $0$, then by \eqref{a2.15} we have $K_{\M_0}(p)<C$ for $p$ near the origin. 
$\M_0$ may not be smooth but we can apply \eqref{a2.15} to $\M_k$ and pass the estimate to the limit.

\vskip5pt

\item [(I\!I)] 
If $\M_0$ is not strictly convex at $0$,
denote $T=: \M_0\cap\{x_{n+1}=0\}$. 
Then $T$ is a convex set of dimension $d\ge 1$.

\vskip3pt

\bei
\item [(I\!I.1)] 
If for a small constant $h_0>0$, $\M_0\cap\{x_{n+1}<h_0\}$ can be represented as a graph
with bounded gradient,  then by \eqref{a2.15} we have $K_{\M_0}(p)<C$
for $p\in \M_0\cap\{x_{n+1}<h_0\}$.

\vskip3pt

\item [(I\!I.2)]  If $\M_0\cap\{x_{n+1}<h_0\}$ can not be represented as a graph with bounded gradient, 
we first derive the estimate near the extreme points of $T$.
Let $q$ be an extreme point.
First we make a translation of the coordinates such that $q=0$,
then by convexity, we can make a rotation of the coordinates in $\R^{n+1}$
such that in the new coordinates $y=(y_1, \cdots, y_n, y_{n+1})$,
$\M_0\cap\{y_{n+1}<h_0\}$ can be represented as a graph of a convex function $u$ with bounded gradient.
Since $\M_0$ is convex and normalized, the upper bound of the gradient $Du$ depends only on $n$.
We then apply the a priori estimate \eqref{a2.15} in Lemma \ref{L2.3} to conclude 
the estimate \eqref{KC} near $q$.

\vskip3pt

\item [(I\!I.3)]  From (I\!I.2), the Gauss curvature is bounded near all extreme points of $T$.
Denote by $E$ the set of all extreme points 
and let 
\vskip3pt
\centerline{\ \hskip50pt $E_\delta=\{p\in\M_0\ |\ \text{dist}(p, E)<\delta\}$, }
\vskip3pt
for a small $\delta>0$.
Let $\tilde q\not\in E_\delta$ be a boundary point of $T$.
Similarly as in (I\!I.2), we can choose an appropriate coordinates 
such that locally $\M_0-E_\delta$ is a graph of convex function $u$, 
and the gradient of $u$ is bounded.
We then apply Lemma \ref{L2.3a} to $u$ to obtain the estimate \eqref{a2.15}.
As a result, we obtain an upper bound for $K_{\M_0}$ near $\tilde q$.

\vskip3pt

\item [(I\!I.4)]  Denote by $\tilde E$ the boundary of $T$ and 
let  $\tilde E_\delta=\{p\in\M_0\ |\ \text{dist}(p, \tilde E)<\delta\}$.
By (I\!I.2) and (I\!I.3), we then apply Lemma \ref{L2.3a} to $\{\M_0-\tilde E_\delta\}\cap\{x_{n+1}<h_0\}$,
and obtain an upper bound for $K_{\M_0}$ in $\M_0\cap\{x_{n+1}<h_0\}$.
\eei

\noo
We have therefore obtained an upper bound for $K_{\M_0}(p)$ for all $p\in \M_0\cap\{x_{n+1}<h_0\}$.
By the convergence $\M_k\to \M_0$,
the above proof implies a uniform upper bound for $K_{\M_k}$ in $\M_k\cap\{x_{n+1}<h_0\}$.
But by our assumption, we have $K_{\M_k} (p_k)\ge k$ at  $p_k\in \M_k\cap\{x_{n+1}<\frac 1k\}$.
This is a contradiction.\hfill $\square$

\eei

\vskip5pt

Using the $C^{2, \alpha}$ estimate of Caffarelli for the Monge-Amp\`ere
equation \cite{[C2]}, and the $C^\alpha$ estimate of Caffarelli-Gutierrez
for the linearized Monge-Amp\`ere equation \cite{[CG]}, 
we have the following a priori estimates for the affine mean curvature equation \cite{[TW2]}.
See also \cite {[CW], [Le17]} where the conditions on $f$ can be relaxed.

\begin{Proposition}  \label{P2.1}  
Let $u\in C^4(\Om)$ be a locally uniformly convex solution of \eqref{a1.1}.
\newline
(i) Assume $f\in L^\infty(\Om)$. Then for any $p>1$ and $\Om'\subset\subset \Om$,
we have the estimate
\beq  \label{a2.18}  
\|u\|_{W^{4, p}(\Om')}\le C ,
\eeq
where $C$ depends on $n, p, \sup_\Om |f|$, $\text{dist}(\Om', \pom)$, and the
modulus of convexity of $u$.
\newline
(ii) Assume $f\in C^\alpha(\Om)$ for some $\alpha\in (0,1)$. Then
\beq  \label{a2.19}  
\|u\|_{C^{4, \alpha}(\Om')}\le C ,
\eeq
where $C$ depends on $n, \alpha, \|f\|_{C^\alpha(\Om)}$,
$\text{dist}(\Om', \pom)$, and the modulus of convexity of $u$.
\end{Proposition}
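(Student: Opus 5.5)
The plan is to reduce Proposition \ref{P2.1} to regularity results for the two second-order equations \eqref{a1.1} and \eqref{a1.2}, viewed as a coupled system, on sections of $u$. The key input is Lemma \ref{L2.3}. Fix $\Om'\subset\subset\Om$ and a point $x_0\in\Om'$. Using the modulus of convexity, choose $h>0$, depending only on $\rho_u$ and $\text{dist}(\Om',\pom)$, so that the section $S=S^0_{h,u}(x_0)$ is compactly contained in $\Om$. Subtract the tangent plane and apply Lemma \ref{L2.3} to $u-\L_{x_0}-h$ on $S$; this function vanishes on $\p S$. Case (i) of Lemma \ref{L2.3} gives an upper bound $\det D^2u\le C$ on a smaller section $S^0_{h/2,u}(x_0)$.

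For the lower bound on $\det D^2 u$, I would apply the same argument to the Legendre transform. By \eqref{a2.6}, $u^*$ satisfies the dual equation \eqref{a2.9}, which is Case (ii) with $\hat f=f(Du^*)$. Strict convexity of $u$ together with a gradient bound gives strict convexity of $u^*$ on the image of a section. Hence Case (ii) yields $\det D^2u^*\le C$, that is, $\det D^2u\ge C^{-1}$ on a section. The constants depend only on $n$, $\sup|f|$, $\text{dist}(\Om',\pom)$ and $\rho_u$. An equivalent route is to bound $w=d^{\th-1}$ from above by a maximum principle for the linearized operator.

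With $C^{-1}\le\det D^2u\le C$ on sections, Lemma \ref{L2.2} (Caffarelli) controls the geometry of the sections. The operator $U^{ij}\p_{ij}$ is then a linearized Monge--Amp\`ere operator with bounded measure density, and the Caffarelli--Gutierrez Harnack inequality applies to $w$ solving $U^{ij}w_{ij}=f$ with $f\in L^\infty$. It gives $w\in C^\alpha$, and therefore $\det D^2u=w^{1/(\th-1)}\in C^\alpha$. Caffarelli's $C^{2,\alpha}$ estimate for \eqref{a1.2} then gives $u\in C^{2,\alpha}$ with $D^2u$ uniformly elliptic. At that point \eqref{a1.1} is a uniformly elliptic linear equation for $w$ with $C^\alpha$ coefficients. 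Calder\'on--Zygmund theory gives $w\in W^{2,p}$ when $f\in L^\infty$, and Schauder theory gives $w\in C^{2,\alpha}$ when $f\in C^\alpha$.

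To finish, differentiate $\log\det D^2u=\frac{1}{\th-1}\log w$ to obtain a linear uniformly elliptic equation for the derivatives of $u$. Bootstrapping gives $u\in W^{4,p}$ in case (i) and $u\in C^{4,\alpha}$ in case (ii). Covering $\Om'$ by finitely many sections yields \eqref{a2.18} and \eqref{a2.19}.

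I expect the main obstacle to be the two-sided determinant bound, especially the lower bound, and the requirement that its dependence be only on the modulus of convexity. The dual argument has to transfer strict convexity and a section of controlled size to $u^*$. I would first do this by using the gradient bound on sections to keep $Du(S)$ inside a fixed ball, which makes the dual sections also uniformly nondegenerate.
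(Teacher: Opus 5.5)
Your outline is essentially the argument the paper relies on; the paper gives no proof beyond citing \cite{[TW2]} together with Caffarelli's $C^{2,\alpha}$ estimate and the Caffarelli--Guti\'errez H\"older estimate, namely two-sided bounds on $\det D^2u$ (Lemma~\ref{L2.3} for $u$, and its case (ii) for $u^*$), then $w\in C^\alpha$, $u\in C^{2,\alpha}$, and Calder\'on--Zygmund/Schauder theory with bootstrapping. For the dual step you flag, the sections of $u^*$ are admissible not because $Du(S)$ is bounded (nor because $u^*$ is fully strictly convex, which would need a $C^1$ modulus for $u$ not yet available), but because of the identity $u^*(Du(x))-\ell^*(Du(x))=u(x_0)-u(x)-Du(x)\cdot(x_0-x)\ge\rho_u(|x-x_0|)$, with $\ell^*$ the tangent plane of $u^*$ at $Du(x_0)$; this uses the modulus of convexity at the base point $x$ and places the dual section of height $\rho_u(r)$ inside $Du(B_r(x_0))$, while $|Du^*-x_0|\le\text{diam}\,\Om$ supplies the interior ball required in Lemma~\ref{L2.3}.
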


\subsection{The second boundary value problem} 

For our penalty argument for the approximation result, 
we  need the existence and regularity
of solutions to the second boundary value problem
\beqn
L[u] & =& f (x, u)  \ \ \ \  \text{in}\ \ \Om ,   \label{a2.20}    \\
   u & = & \phi\ \ \ \  \text{on}\ \ \pom ,  \no \\
   w & =& \psi\ \ \ \  \text{on}\ \ \pom , \no 
   \eeqn
where $L$ is the operator in \eqref{a1.1}. Assume that $\Om$ is a smooth,
uniformly convex domain in $\R^n$, $\phi, \psi$ are smooth functions
on $\pom$ and
\beq  \label{a2.21}  
C_0^{-1} \le \psi \le C_0      
\eeq
for some positive constant $C_0$. 
Assume also that  $f\in L^\infty(\Om\times\R)$ and $f$ is non-decreasing in $u$. 
Then we have  \cite{[TW2], [TW3]}

\begin{Proposition}   \label{P2.2}  
Under the above assumptions, problem \eqref{a2.20}
admits a unique solution $u\in W^{4,p}(\Om)$ $(\forall\ p>1)$. 
If $f\in C^\alpha(\bom\times \R)$, where $\alpha\in (0, 1)$, then the
solution $u\in C^{4,\alpha}(\bom)$.
\end{Proposition}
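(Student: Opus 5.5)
We plan to prove Proposition \ref{P2.2} by the method of continuity, or equivalently a Leray--Schauder degree argument, built on global a priori estimates. Uniqueness we treat separately. We regard \eqref{a2.20} as the coupled system
\[
\det D^2 u=w^{1/(\th-1)},\qquad U^{ij}w_{ij}=f(x,u),
\]
with $u=\phi$, $w=\psi$ on $\pom$. We embed it in the family $f_t=tf$, $\psi_t=t\psi+(1-t)\psi_0$, $\phi_t=t\phi+(1-t)\phi_0$. Here $(\phi_0,\psi_0)$ are the boundary data of an explicit solution: a quadratic polynomial on the uniformly convex domain $\Om$ with constant $w$ and $f_0=0$. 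For this to be an admissible continuity path, the set of $t$ for which a solution exists must be both open and closed.

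\textbf{Openness.} This follows from the implicit function theorem in $C^{4,\alpha}(\bom)$. The linearized operator is a fourth order elliptic operator, a composition of two linearized Monge--Amp\`ere operators, with Dirichlet data on $u$ and $w$. It is injective because $f$ is nondecreasing in $u$, which is exactly the uniqueness argument below in infinitesimal form.

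\textbf{Uniqueness.} Let $u_1,u_2$ be two solutions. We integrate $(L[u_1]-L[u_2])(u_1-u_2)$ by parts twice, using the divergence-free property of the cofactor matrix. The boundary terms vanish since $u_1=u_2$ and $w_1=w_2$ on $\pom$. What remains is
\[
\int (U_1^{ij}-U_2^{ij})\,\ldots \ \ge 0 ,
\]
which, combined with concavity of $d\mapsto d^\th$ on positive matrices (the same concavity behind the concavity of $A$), forces $D^2u_1=D^2u_2$. The monotonicity of $f$ in $u$ is used to give the term $\int (f(x,u_1)-f(x,u_2))(u_1-u_2)$ the right sign.

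\textbf{A priori estimates.} Closedness needs estimates for solutions $u\in C^{4,\alpha}(\bom)$, uniform in $t$, which we derive in order.
\begin{itemize}
\item[(a)] \emph{$C^0$ and gradient bounds.} A bound for $w$ from above follows from the maximum principle applied to $w$ plus a multiple of $|x|^2$, using $U^{ij}\delta_{ij}$ and $|f|\le C$. Since $u$ is convex with $u=\phi$ on $\pom$, $\sup|u|$ and then $\sup|Du|$ follow by barriers, using the uniform convexity of $\Om$.
\item[(b)] \emph{Two-sided bounds for $\det D^2u$.} These are equivalent to two-sided bounds for $w$, in terms of $C_0$ in \eqref{a2.21}. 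We get them by the maximum principle applied to auxiliary functions of the type $z$ in \eqref{a2.16}, but now with boundary values controlled by $\psi$ instead of by vanishing of $u$.
\item[(c)] \emph{Hölder and higher regularity.} With $C^{-1}\le\det D^2u\le C$ and $u=\phi$ on the uniformly convex $\pom$, $u$ is strictly convex up to the boundary. The Caffarelli--Guti\'errez Hölder estimate, together with its boundary version for the linearized Monge--Amp\`ere equation, gives $w\in C^\alpha(\bom)$. Global $C^{2,\alpha}$ estimates for the Monge--Amp\`ere equation with Hölder right-hand side and smooth boundary data on uniformly convex domains then give $u\in C^{2,\alpha}(\bom)$. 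At that point $U^{ij}w_{ij}=f$ is uniformly elliptic with Hölder coefficients, so $L^p$ theory gives $w\in W^{2,p}$, hence $u\in W^{4,p}$. If $f\in C^\alpha$, Schauder theory gives $C^{4,\alpha}(\bom)$ instead.
\end{itemize}

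For $f$ only in $L^\infty$, we approximate $f$ by smooth functions that are still nondecreasing in $u$ and pass to the limit using the $W^{4,p}$ bounds. Uniqueness identifies the limit.

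\textbf{Main obstacle.} We expect step (b) to be the hardest, specifically the lower bound for $\det D^2 u$ (the upper bound for $w$ is routine). Interior methods such as Lemma \ref{L2.3} do not apply directly near $\pom$, and the global $C^{2,\alpha}$ Monge--Amp\`ere boundary theory needs $\det D^2u$ bounded above and below up to the boundary. Our first attempt will be to set $v=-\log w-$(function of $u-\phi$) and apply the maximum principle. The boundary values of $v$ are fixed by $\psi$, and an interior extremum would be excluded using the identity $u^{ij}u_{kij}=-\frac{1}{1-\th}\frac{w_k}{w}$, as in the proof of Lemma \ref{L2.3}.
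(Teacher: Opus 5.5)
\textbf{Overall structure.} The paper does not reprove Proposition~\ref{P2.2}; it quotes \cite{[TW2], [TW3]}. Your architecture matches that result: continuity in place of the degree argument of \cite{[TW2]}, uniqueness from concavity of $(\det D^2u)^{\theta}$ and monotonicity of $f$, bounds on $\det D^2u$, global H\"older continuity of $w$, the global $C^{2,\alpha}$ estimate of \cite{[TW3]}, then linear theory.

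\textbf{Step (a) has a genuine gap.} Convexity and $u=\phi$ on $\pom$ give only $u\le\max\phi$. For example, $u=M(|x|^2-1)$ on the unit ball is convex, vanishes on the boundary, and solves $L[u]=0$ with $w\equiv(2M)^{n(\theta-1)}$ for every $M>0$. So $\inf u$ can only be controlled through the equation together with $\psi\ge C_0^{-1}$; no barrier built from $\phi$ and $\Om$ alone gives it. The paper signals that this is a real step: \cite{[TW2]} needed the extra hypothesis $f(x,t)\le 0$ for $t\le t_0$, a condition at very negative $u$, removed only in \cite{[CW]}.

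Your ordering of (a) and (b) is also circular. A boundary gradient bound needs a lower barrier for $u$, hence an upper bound for $\det D^2u$ near $\pom$. But your (b) gets that bound from functions of type \eqref{a2.16}, whose constant $A$ depends on $\sup|Du|$. You need gradient-free control near the boundary first, plus a separate argument for $\inf u$. Step 3 of the proof of Lemma~\ref{L4.1} provides such control with $\log\frac{w}{u-\ell}$, using only $f\le C$, an upper bound for $u$, and $w=\psi$ on $\pom$.

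\textbf{The determinant bounds are confused about directions.} Since $w=(\det D^2u)^{\theta-1}$ with $\theta-1<0$, an upper bound for $w$ \emph{is} a lower bound for $\det D^2u$. So you cannot call one routine and the other the main obstacle. The argument you give in (a) also fails. At an interior maximum of $w+A|x|^2$ you only get $2A\,\mathrm{tr}(U^{ij})\le -f$, which bounds $\det D^2u$ from \emph{above} at that point and says nothing about $\sup w$.

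The version that works is multiplicative. Take an interior maximum of $\log w+A|x|^2$ with $4A\sup_{\Om}|x|^2\le 1$. There $A\,\mathrm{tr}(u^{ij})\le -f\,(\det D^2u)^{-\theta}$, and since $\mathrm{tr}(u^{ij})\ge n(\det D^2u)^{-1/n}$ with $\theta<1/n$, you get $\det D^2u\ge c$ with no gradient bound needed. The opposite bound comes from an interior minimum of $\log w-A|Du|^2$, which is the computation of Lemma~\ref{L2.3} without the $\beta$-term, with boundary values controlled by $\psi$. This gives $A\Delta u\le f(\det D^2u)^{-\theta}$, hence $\det D^2u\le C$, but only once $\sup|Du|$ is known.

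So the real difficulty is not the determinant bounds. It lies in the $C^0$ and gradient bounds above, and in the results of \cite{[TW3]} you quote (global $C^{2,\alpha}$ for Monge--Amp\`ere with H\"older data, boundary H\"older continuity of $w$).

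\textbf{Uniqueness: the boundary terms do not vanish.} They combine into $-\int_{\pom}\psi\,(U_1^{\nu\nu}-U_2^{\nu\nu})\,\partial_\nu(u_1-u_2)$. This is $\le 0$ because $U^{\nu\nu}$ is the determinant of the tangential Hessian $D_T^2\phi+\partial_\nu(u-\phi)\,\mathrm{II}_{\pom}$, which is increasing in $u_\nu$ when $\Om$ is convex. That is where convexity of $\Om$ enters, and the same point is needed for injectivity of your linearized operator.
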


In \cite{[TW2]} we also assume that $f(x, t)\le 0$ when $t\le t_0$, for some $t_0<0$. 
This condition was removed in \cite {[CW]}.

\section{The affine Bernstein problem}

Consider the affine maximal surface equation  
\beq\label{a3.1}
U^{ij} w_{ij}(x)=0,   
\eeq
where $x=(x_1, \cdots, x_n)\in\R^n$, $w=[\det D^2 u]^{\frac{1}{n+2}-1}$,
$U^{ij}$ is the cofactor matrix of $D^2 u$.
In this section, we provide a counterexample to the affine Bernstein problem.

\begin{Theorem} \label{T3.1}
When the dimension $n\ge 10$,
there exists an entire smooth convex solution to equation \eqref{a3.1}
which is not quadratic polynomial.
 \end{Theorem}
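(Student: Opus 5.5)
We look for solutions with the symmetry of Example 2: for $n\ge 10$ split $x=(x',x_n)$, $x'\in\R^{n-1}$, and take the ansatz
$$u(x)=|x'|^{a}\,g\big(x_n|x'|^{-b}\big)+\tfrac12 x_n^2\cdot 0\quad\text{or, more conveniently,}\quad u(x',x_n)=r^{\alpha}\,G(t),\ \ r=|x'|,\ \ t=x_n r^{-\beta},$$
with exponents chosen so that the equation \eqref{a3.1} is invariant under the anisotropic dilation $(x',x_n)\mapsto(\lambda x',\lambda^{\beta}x_n)$, $u\mapsto\lambda^{\alpha}u$. In Example 2 ($n=10$), $u=\sqrt{r^9+x_n^2}$ corresponds to $\alpha=9/2$, $\beta=9/2$, $G(t)=\sqrt{1+t^2}$. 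Since $U^{ij}w_{ij}$ scales homogeneously under such dilations and the ansatz is $O(n-1)$-invariant in $x'$, substituting reduces \eqref{a3.1} to a fourth order ODE for $G(t)$ on $\R$; this is the reduction announced after Theorem \ref{Thm2.1}. For $n>10$ I would keep the same structure, adjusting $\alpha,\beta$ (or taking a product of the $n=10$ example with a quadratic in extra variables: if $u_{10}$ solves the equation in $\R^{10}$ then $u_{10}(x_1,\dots,x_{10})+\frac12|x''|^2$ is affine maximal in $\R^n$, since the Hessian is block diagonal and $w$ factors with a constant block). So it suffices to treat $n=10$.

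The key point is that the singular cone $u_0=\sqrt{r^9+x_n^2}$ is not smooth at $r=0$, so the smooth example must be a different solution, presumably one that is smooth across the axis $\{x'=0\}$ and asymptotic to the cone at infinity. The plan is therefore to work instead with a profile in the radial variable with $x_n$ separated, e.g. $u=\frac12 x_n^2\,h(r)+k(r)$ or $u=\sqrt{k(r)+x_n^2}$-type, choosing the form so that (a) the ODE is autonomous after $s=\log r$, (b) $u_0$ is an explicit solution (a fixed point / special trajectory), and (c) smoothness at $r=0$ corresponds to a regular initial condition (e.g. $k(r)=c_0+c_1r^2+\dots$ even and analytic). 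Concretely I would try $u=\sqrt{F(r)+x_n^2}$ with $F(r)=r^9$ giving Example 2, derive the ODE for $F$, and look for $F$ smooth, positive at $0$, with $F(r)\sim r^9$ as $r\to\infty$; then $u$ is smooth everywhere (as $F>0$) and not quadratic.

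Steps: (1) compute $\det D^2u$ and $U^{ij}w_{ij}$ for the ansatz and obtain the fourth order ODE for $F$; check $F=r^9$ solves it and that scaling $F\mapsto \lambda^{9}F(\cdot/\lambda)$ preserves solutions, which makes the equation autonomous in $s=\log r$ after normalizing $F=r^9e^{\varphi(s)}$. (2) Solve locally at $r=0$ with data $F(0)=1$, $F'(0)=0$, $F$ even, via a power series/fixed point for the regular singular point; convexity of $u$ near the axis follows from $F(0)>0$ and $F''(0)>0$. (3) Show the local solution extends for all $r>0$ while keeping $D^2u>0$; this is the main obstacle. Here I would use the singular solution as a barrier: compare the trajectory $\varphi(s)$ with $\varphi\equiv 0$, exploiting that in dimension $10$ the linearization at $\varphi=0$ has roots allowing solutions to stay bounded (the borderline that makes $n=10$ work), and use convexity/monotone quantities along the autonomous flow to rule out blow-up or loss of convexity in finite $s$. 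If a direct Lyapunov argument fails, I would fall back on a shooting argument in the free parameter $F''(0)$, using continuity and the fact that loss of convexity and blow-up are open conditions with opposite sides, to find a trajectory converging to the cone. (4) Conclude that $u$ is an entire smooth convex solution, non-quadratic since it grows like $r^{9/2}$.
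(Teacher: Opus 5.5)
Your ansatz $u=\sqrt{F(|x'|)+x_n^2}$ is the paper's: with $F=2h$, rotating the $(x_n,x_{n+1})$-plane by $45^\circ$ turns the graph into $y_ny_{n+1}=h(y')$, i.e.\ $u=h(r)/t$, which yields the ODE \eqref{a3.4}. Your instinct to use the singular cone as a barrier is also right. The gap is that the global existence step, where all the difficulty lies, is not actually supplied.

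\textbf{Shooting does not work.} Shooting in $F''(0)$ is vacuous. The equation is invariant under $F\mapsto a^2F(b\,\cdot)$, which is the symmetry $u\mapsto a\,u(bx',x_n/a)$. A smooth even solution is determined by $F(0)$ and $F''(0)$, because the $r^4$ coefficient is forced by the equation at $r=0$. So all admissible data give the same solution up to scaling: either all are global or none is.

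\textbf{The phase-plane plan lacks a mechanism.} It names no monotone quantity. A fourth order ODE also has no maximum principle that would let you compare $h$ with $c\,r^9$ directly.

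\textbf{How the paper fills the gap.} It gets the comparison from the variational structure. Take a maximal interval $[0,R^*)$ with $R^*<\infty$.
If $h$ or $h'$ blows up at $R^*$, raise $c^*r^9$ from below until it first touches $h$; this happens tangentially at some $r_0\in(0,R^*)$. Then $h$ and $c^*r^9$ are two maximizers of the concave functional $\mathcal J$ with the same data $h(r_0),h'(r_0)$, contradicting uniqueness.
If $h,h'$ stay bounded, Lemma~\ref{L3.2} bounds $h'',h'''$ at $R^*$, in three steps:
\begin{itemize}
\item writing \eqref{a3.4} as $(pg')'=-\frac{n(n+1)}{2}\frac{h''}{h}\,pg$ with $p=h^{(n+4)/2}(h')^{n-2}$ gives $g'\le 0$, so convexity cannot degenerate, since $g$ is a negative power of $(h'/r)^{n-2}(2hh''-h'^2)$;
\item Gr\"onwall on the linear system for $(g,g')$ bounds $g'$;
\item a Riccati inequality for $\sqrt h$ bounds $h''$.
\end{itemize}
You also do not need convergence to the cone: any global solution is non-quadratic, since $u(0,x_n)=\sqrt{F(0)+x_n^2}$.

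\textbf{The reduction of $n>10$ to $n=10$.} Adding $\frac12|x''|^2$ fails as stated, because $\theta=\frac1{n+2}$ depends on $n$; the paper's Remark~3.3 asserts the same shortcut. Let $d=\det D^2u_{10}$ and let $U^{ij}$ be the cofactor matrix of $D^2u_{10}$. The product is affine maximal in $\R^n$ iff $U^{ij}(d^{\frac1{n+2}-1})_{ij}=0$, while $u_{10}$ satisfies $U^{ij}(d^{-\frac{11}{12}})_{ij}=0$. Since $U^{ij}(d^{q})_{ij}=q\,d^{q-2}\big(d\,U^{ij}d_{ij}+(q-1)U^{ij}d_id_j\big)$, both can hold only if $Dd\equiv0$, and then $u_{10}$ is quadratic by Pogorelov's theorem.

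The natural remedy is to run the argument directly in dimension $n$. One checks that $h=r^k$ solves \eqref{a3.4} iff $4k^2-(n^2-4n+12)k+4(n-1)^2=0$, whose discriminant is $(n+2)^2(n-2)(n-10)$. So for every $n\ge10$ there are singular profiles with $k>2$ (a double root $k=9$ when $n=10$). These can play the role of $c^*r^9$, once their maximality is verified.
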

 
Our candidate for the counter-examples has the following properties
\bei
\item [(i)] $u\ge 0$. 
\item [(ii)] $u$ is rotationally symmetric in $x'=(x_1, \cdots, x_{n-1})$.
\item [(iii)] $u$ is symmetric in $x_n$.
\item [(iv)] $u(0, x_n)\ge |x_n|$ and $\lim_{x_n\to\infty} \frac{u(0, x_n)}{|x_n|}=1$. 
\item [(v)] If $(x_1, \cdots, x_n, x_{n+1})\in \R^{n+1}$ is a point on the graph of $u$, then 
$$
(x_{n+1}+x_{n})(x_{n+1}-x_{n}) = 2 h(x'), 
$$
for a function $h(x')$. 
Hence $u$ is given by
\beq\label{a3.2}
u(x)= \sqrt{x_n^2 +2h(|x|)} .
\eeq
\eei

If we rotate the $(x_n, x_{n+1})$-plane by 45 degree, 
namely if we make the coordinate change
$$
{\begin{split}
  & y'=x',\\
  & y_n=(x_n+x_{n+1})/\sqrt 2,\\
  & y_{n+1}=(x_{n+1}-x_n)/\sqrt 2 , 
  \end{split}} $$
then the graph of $u$ can be expressed as 
$$y_ny_{n+1}=h(x') . $$
By a slight abuse of notation, we still use $u$ to denote the function $\frac{h(y')}{y_n}$.
From the affine invariance, $u$ satisfies equation \eqref{a3.1}.

\vskip10pt

\subsection{Derivation of the ODE} Denote $r=|y'|$, $t=y_n$. Then we have
\beq\label{a3.3}
u=\frac{h(r)}{t} ,
\eeq
We compute
$${\begin{split}
  & u_r=\frac{h'}{t},\ \ \ u_t= -\frac{h}{t^2},  \\
 &  u_{rr}=\frac{h''}{t},\ \ u_{rt}=-\frac{h'}{t^2}, \ \ u_{tt}=\frac{2h}{t^3}. 
  \end{split}} 
  $$
  and in polar coordinates, 
$$
D^2 u = {\begin{pmatrix}
 & \frac{u_r}{r} , &\cdots, &0, & 0\\
 &\cdot, &\cdots, &\cdot, & \cdot\\
 &0, &\cdots, &u_{rr}, & u_{tr}\\
 & 0, & \cdots, &u_{tr}, &u_{tt}\\
\end{pmatrix}} . $$
Denote
$$\hskip0pt {\begin{split}
   \Delta &= u_{rr}u_{tt}-u_{tr}^2=\frac{1}{t^4} [2hh''-h'^2] , \\
   D &=\det D^2 u =\Big(\frac{u_r}{r}\Big)^{n-2} \Delta  \\
     & = \frac{1}{t^{n+2}} \Big(\frac{h'}{r}\Big)^{n-2} [2hh''-h'^2],
\end{split}} $$
and
$$\hskip0pt {\begin{split}
   w &= D^{\frac{1}{n+2}-1}=t^{n+1} \Big(\frac{h'}{r}\Big)^{-\frac{n+1}{n+2}(n-2)}  \big[2hh''-h'^2\big]^{-\frac{n+1}{n+2}}\\
       &  =: t^{n+1} g.\\
\end{split}} $$
The cofactor matrix is
$${\begin{split}
U^{ij} &=
 {\begin{pmatrix}
  \big(\frac{u_r}{r}\big)^{n-3}\Delta,  &\cdots, &0, & 0\\
 \cdot, &\cdots, &\cdot, & \cdot\\
 0, &\cdots, &\big(\frac{u_r}{r}\big)^{n-2}u_{tt}, & -\big(\frac{u_r}{r}\big)^{n-2}u_{rt}\\
 0, &\cdots, &-\big(\frac{u_r}{r}\big)^{n-2}u_{rt}, &\big(\frac{u_r}{r}\big)^{n-2}u_{rr}\\
 \end{pmatrix}} \\
 & =\big(\frac{u_r}{r}\big)^{n-2} {\begin{pmatrix}
   \big(\frac{u_r}{r}\big)^{-1}\Delta,  &\cdots, &0, & 0\\
   \cdot, &\cdots, &\cdot, & \cdot\\
 0, &\cdots,&u_{tt}, &-u_{tr} \\
 0, &\cdots, &-u_{rt}, &u_{rr}\\
\end{pmatrix}}  . 
 \end{split}}$$
We compute
$${\begin{split}
\big(\frac{u_r}{r}\big)^{2-n} U^{ij}w_{ij}
 &= u_{rr}w_{tt}-2u_{rt}w_{rt}+u_{tt}w_{rr} +(n-2)\big(\frac{u_r}{r}\big)^{-1} \Delta \frac{w_r}{r} \\
 &= u_{rr}w_{tt}-2u_{rt}w_{rt}+u_{tt}w_{rr} +(n-2) \Delta \frac{w_r}{u_r} \\
 &= n(n+1)t^{n-2}h''g+2(n+1)t^{n-2} h'g'+2t^{n-2}hg''+(n-2)t^{n-2}\frac{g'}{h'} [2hh''-h'^2] .
 \end{split}} $$
Hence the affine maximal surface equation \eqref{a3.1} becomes
$$
n(n+1)h''g+2(n+1) h'g'+2hg''+(n-2)\frac{g'}{h'} [2hh''-h'^2] =0  , 
$$
namley
$$
n(n+1)h''g+(n+4) h'g'+2hg''+2(n-2)\frac{hh''}{h'} g'=0  
$$
It can be written as
\beq\label{a3.4}
g''+\Big[\frac{n+4}{2}\frac{h'}{h}  +(n-2)\frac{h''}{h'}\Big] g' +\frac{n(n+1)}{2} \frac{h'}{h} \frac{h''}{h'} g=0 . 
\eeq
where
\beq\label{a3.5}
g=\Big(\frac{h'}{r}\Big)^{-\frac{n+1}{n+2}(n-2)}  \big[2hh''-h'^2\big]^{-\frac{n+1}{n+2}} . 
\eeq
Regarding the initial condition,
given that $h$ admits a smooth even extension about $r = 0$, we must have 
$\frac{d^k}{dr^k} h(0)=0$ for all odd $k$.
From the convexity of $h$,  we assume that  $h(0)> 0$ and $h''(0)> 0$. 
Hence we may assume that
\beq\label{a3.6}
h(0) = 1,\quad h'(0) = 0,\quad h''(0) = 1, \quad h'''(0) = 0.
\eeq
Note that if $h(0)=0$,  from properties (i)-(v), 
the lines $\{(x', x_n, x_{n+1})\in\R^{n+1}\ |\ x'=0, x_{n+1}=|x_n| \}$ are contained in the
graph of $u$, hence $u$ is not smooth.
If $h(0)>0$, $u$ is strictly convex near the origin,  
by the regularity of the affine maximal surface equation, 
$u$ is smooth near the origin.

The local existence and uniqueness of solutions to the ODE \eqref{a3.4} - \eqref{a3.6}
is guaranteed by the contraction mapping theorem, since a higher order ODE
can be written as a system of first order ODEs.
We need to prove that the solution exists for all $r>0$. 
However, the ODE \eqref{a3.4} is a fourth order, strongly nonlinear,
it is difficult to work on it directly.

\subsection{A counterexample in dimension n=10.}
Here we use the affine volume functional $A$ to prove the existence of a complete convex solution
to the ODE \eqref{a3.4}, for dimension $n=10$, using the non-smooth counter-example in \cite{[TW1]}
as a sub-solution.

By the local existence of solutions, there is a solution $h$ to \eqref{a3.4} for $r>0$ small.
Assume the maximal existence interval for the solution is $r\in [0, R^*)$, for some $R^*>0$.
We need to prove that $R^*=\infty$.

Assume to the contrary that $R^*<\infty$.
There are  three possibilities.
\begin{itemize}
\item [] {\it Case 1)}: \ $h(R^*)=\infty$,
\item [] {\it Case 2)}: \ $h(R^*)<\infty$ but $h'(R^*)=\infty$,
\item [] {\it Case 3)}: \  $h(R^*)<\infty$ and $h'(R^*)<\infty$.
\end{itemize}

Both {\it Case 1)} and {\it Case 2)} can be ruled out easily.
It was found in \cite{[TW1]} that the function $h^*(r)=c^* r^9$ 
satisfies \eqref{a3.4} in dimension $n=10$, 
and the corresponding function $u(x)=\frac{h^*(|y'|)}{y_n}$  in \eqref{a3.3}  is affine maximal.

When $c^*$ is small, we have $h^*(r)<h(r)\ \forall\ r\in [0, R^*)$.
We can increase the constant $c^*$ until the point  such that
\begin{itemize}
\item  [$\diamond$] $h^*(r)\le h(r)$ for $r\in (0, R^*)$ and 

\item  [$\diamond$] there exists $r_0\in (0, R^*)$ such that 
$h^*(r_0) = h(r_0)$, and  $(h^*)'(r_0) = h'(r_0)$.
 \end{itemize} 
Hence $u(y)=\frac{h(|y'|)}{y_n}$ and $u^*(y)=\frac{h^*(|y'|)}{y_n}$ (see \eqref{a3.3})
are both maximizers of the affine volume functional $A$.

For a convex function $u$ given by \eqref{a3.3}, we have
$$   \det D^2 u  = \frac{1}{t^{n+2}} \Big(\frac{h'}{r}\Big)^{n-2} [2hh''-h'^2]. $$
Hence the affine volume functional is 
$$ {\begin{split}
A(u) &=\int_\Om  \big[\det D^2 u \big]^{\frac{1}{n+2}} \\
        &=\int_\Om \frac 1t \Big(\frac{h'}{r}\Big)^{\frac{n-2}{n+2} } \big[2hh''-h'^2\big]^{\frac{1}{n+2}},
\end{split}} $$
Thus $u$ is affine maximal if and only if $h$ is maximal with respect to the functional  
$$
\mathcal J(h)
 = \int_{- R}^{ R} r^{n-2} \Big(\frac{h'}{r}\Big)^{\frac{n-2}{n+2} } \big[2hh''-h'^2\big]^{\frac{1}{n+2}} dr . 
$$
It follows that both  $h$ and $h^*$ are maximizers of the functional $\mathcal J$ in $\ol S[\mathcal I_0; \eta, \kappa]$,
where $\mathcal I_0=(-r_0, r_0)$, $\eta=h(r_0)$, $\kappa=h'(r_0)$.
This yields a contradiction with the uniqueness of the maximizers of the concave functional $\mathcal J$.
Note that the concavity of $\mathcal J$ is inherited from that of $A$, by restricting $A$ to the domain
$\Om=\{y\in\R^n\ |\ |y'|<r_0, 1<y_n<2\}$.

By $\ol S[\mathcal I_0;  \eta, \kappa]$ we denote
the set of symmetric, positive  functions $h$ defined on $\mathcal I_0$ satisfying
$h(R)=\eta$ and $h'(R)\le \kappa$, such that $u(y)=\frac{h(|y'|)}{y_n}$ is convex.

\vskip5pt

In {\it Case 3)}, we prove that the solution can be smoothly extended to a small neighbourhood of $R^*$.
This contradicts to the assumption that $[0, R^*)$ is the maximal existence interval 
for the initial value problem \eqref{a3.4}-\eqref{a3.6}. 
For this purpose, it suffices to prove that under the conditions 
 $h(R^*)<\infty$ and $h'(R^*)<\infty$ in Case 3),  
 the quantities $h''(R^*)$ and $h'''(R^*)$ are also finite.
If this is done, the local existence theorem to ODEs implies that 
the solution $h$ can be extended to a neighbourhood beyond $R^*$.
Therefore, it suffices to prove the following Lemma \ref{L3.2}.

\subsection{Estimates for the ODE \eqref{a3.4}}
The following Lemma \ref{L3.2} addresses the boundary regularity of solutions of the form \eqref{a3.3} 
to the first boundary value problem for the affine maximal surface equation.


\begin{Lemma}\label{L3.2}
Let \(h(r)\) be a smooth, positive, convex function defined for \(r \in [0,1]\) 
satisfying equation \eqref{a3.4} and the initial conditions \eqref{a3.6}.
Assume that there exist constants \(C_1, C_2 > 0\) such that
\[
0 < h(1) \le C_1, \quad 0 < h'(1) \le C_2.
\]
Then there exist constants \(C_3, C_4 > 0\) such that
\beq\label{a3.7}
h''(1) \le C_3, \quad |h'''(1)| \le C_4.
\eeq
\end{Lemma}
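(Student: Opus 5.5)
Our plan is to derive the bounds on $h''(1)$ and $h'''(1)$ from the a priori estimates of \S2.4, applied to the affine maximal function $u(y)=h(|y'|)/y_n$ of \eqref{a3.3}. We work on a cylinder $\Om=\{|y'|<1,\ 1<y_n<2\}$, or a slightly larger one if $h$ extends a little beyond $1$, and use the fact that the whole ODE analysis reduces to bounding $\det D^2 u$ above and below near $|y'|=1$. Since $D=t^{-(n+2)}(h'/r)^{n-2}[2hh''-h'^2]$ and $h,h'$ are bounded at $r=1$, an upper bound on $h''(1)$ is equivalent to an upper bound on $\det D^2u$ there. Once $h''(1)\le C_3$ is known, the quantity $g$ in \eqref{a3.5} is controlled. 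Equation \eqref{a3.4}, written as a first order equation for $g'$ with integrating factor $h^{(n+4)/2}(h')^{n-2}$, then gives
$$\big(h^{\frac{n+4}{2}}(h')^{n-2}g'\big)'=-\tfrac{n(n+1)}{2}h^{\frac{n+2}{2}}(h')^{n-2}h''g .$$
Integrating from $0$ bounds $g'(1)$, and hence $h'''(1)$ via the expression of $g'$ in terms of $h'''$. For this last step we also need a \emph{lower} bound on $2hh''-h'^2$ at $r=1$, to control the negative power in $g$.

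The first step is therefore the lower bound on $2hh''-h'^2$, that is, a lower bound on $\det D^2 u$, or equivalently an upper bound on $\det D^2 u^*$ for the Legendre dual. We pass to $u^*$, which satisfies the dual equation \eqref{a2.9} with $f=0$ (Case (ii) of Lemma \ref{L2.3} with $\hat f=0$). We then apply Lemma \ref{L2.3} to $u^*$ minus a supporting affine function, on a section around the point $Du(y_0)$ with $|y_0'|=1$. This is legitimate because $h$ is defined and smooth (so $u$ is uniformly convex) on $[0,1]$. The constants depend only on $h(1),h'(1)$, through gradient and height bounds for $u$ on sections. For the height and gradient bounds we use that $h$ is convex and increasing with $h(0)=1$, $h'(0)=0$, so $1\le h\le C_1$ and $0\le h'\le C_2$ on $[0,1]$.

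For the upper bound $h''(1)\le C_3$ we apply Lemma \ref{L2.3} (Case (i), $f=0$) to $u$ minus a tangent plane on a section $S^0_{\eta,u}(y_0)$. The required modulus of convexity in the $y'$-direction comes from $h'(r)/r$ and the convexity, and in the $t$-direction from $u_{tt}=2h/t^3\ge 2/t^3$. The key point is to choose the section so that $\mathrm{dist}(y_0,\p S)$ is bounded below independently of $h''(1)$. A large $h''(1)$ makes sections thin in the radial direction, which is exactly what we must exclude.

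The main obstacle is that $y_0$ with $|y_0'|=1$ sits on the boundary of the region where $h$ is known, so the section about $y_0$ is not contained in the domain of $u$. We expect to handle this by the rotational symmetry and the monotonicity of $h''$: if $h''$ blows up as $r\to1$, then $D$ is large on a whole annular shell $1-\epsilon<r<1$. We then apply the interior estimate at points with $r=1-\epsilon$, using a section of size comparable to $\epsilon$, which lies inside $\{r<1\}$. Combining with an affine rescaling (the equation is affine invariant) normalizes that section, and Lemma \ref{L2.3a} handles the part of its boundary on $\{r=1\}$. Making this rescaling argument produce constants independent of $\epsilon$ is the delicate step. As a fallback we would use the comparison with $h^*=c^*r^9$, as in Case 1)/2), to control the radial profile.
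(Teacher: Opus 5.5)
\textbf{There is a genuine gap.} It sits in the step that carries the whole lemma, the bound $h''(1)\le C_3$.

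Lemma \ref{L2.3} is an interior estimate. It must be applied on a section $\{u<\ell\}$ with $u=\ell$ on its boundary, so that section must lie inside the region where $u=h(|y'|)/y_n$ is a known smooth solution. Its constant also depends on the distance from the point to the boundary of the section. Here $h$ is known only on $[0,1]$. In the application, $1$ stands for $R^*$, and this lemma is exactly what is used to extend $h$ past $R^*$, so no extension may be assumed. A point with $|y_0'|=1$ therefore admits no admissible section. You recognize this but do not resolve it, and the proposed workarounds do not close it:
\begin{itemize}
\item Invoking Lemma \ref{L2.3a} with $\Gamma_2\subset\{|y'|=1\}$ requires $\det D^2u\le C_0$ on $\Gamma_2$. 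Since $\det D^2u=t^{-(n+2)}(h'/r)^{n-2}(2hh''-h'^2)$, that hypothesis is precisely the bound on $h''(1)$ you are trying to prove, so the argument is circular.
\item Rescaled sections at $r=1-\epsilon$ yield $\epsilon$-independent constants only if those sections are not thin in the radial direction, which is again the conclusion.
\item The ``monotonicity of $h''$'' is not established anywhere.
\item The comparison with $c^*r^9$ exists only for $n=10$, and touching from below gives no upper bound on $h''$.
\item Your dual step for the lower bound on $2hh''-h'^2$ has the same defect, since $Du(y_0)$ lies on the boundary of $Du(\{|y'|<1\})$.
\end{itemize}

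\textbf{The missing idea is that the ODE alone suffices, and you already wrote down the key identity.} Set $p=h^{(n+4)/2}(h')^{n-2}$. The right-hand side of $(pg')'=-\frac{n(n+1)}{2}h^{(n+2)/2}(h')^{n-2}h''g$ is nonpositive, and $pg'=0$ at $r=0$ because $g$ is even. Hence $g'\le 0$ and $g\le g(0)$, which by itself gives the lower bound on $2hh''-h'^2$ with no PDE input.

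Now integrate, using $1\le h\le C_1$ and $\int_0^r(h')^{n-2}h''\,ds=h'(r)^{n-1}/(n-1)$. This gives $|g'(r)|\le C(n)\,C_1^{(n+2)/2}C_2\,g(0)$ on $(0,1]$ \emph{without} any prior bound on $h''$. The paper obtains the same control of $|g'|$ near $r=1$ through the Gr\"onwall argument of Lemma \ref{L3.3}.

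The paper then writes $h=\phi^2$, so that $2hh''-h'^2=4\phi^3\phi''$ and $g=c\,r^{-\beta}\phi^{\gamma}(\phi')^{\beta}(\phi'')^{\alpha}$, with $\alpha=-\frac{n+1}{n+2}$, $\beta=(n-2)\alpha$, $\gamma=-\frac{(n+1)^2}{n+2}$. The formula for $g'$ becomes $\phi'''+\frac{n-2}{\phi'}(\phi'')^2+\hat\gamma\phi''=\hat\delta\,(\phi'')^{1-\alpha}$. Here $\hat\gamma$ and $\hat\delta$ are bounded on $[\frac12,1]$, $\phi'\le C_2/2$, and $1-\alpha<2$. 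This yields the Riccati inequality $\phi'''+\beta_0(\phi'')^2\le C_0$ on $[\frac12,1]$, which bounds $\phi''$ and hence $h''(1)$. The same identity then bounds $\phi'''$, and therefore $|h'''(1)|$.

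This Riccati step, driven by the positive coefficient $(n-2)/\phi'$ of $(\phi'')^2$, is what replaces the boundary estimate your PDE route cannot provide.
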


\noindent{\bf Remark 3.1.}\ 
Once \eqref{a3.7} is established,
estimates for higher derivatives follows from the equation \eqref{a3.4} directly.
The constants $C_3, C_4$ depend on \(n\), \(C_1\), and \(C_2\).
Since $h$ is a smooth, convex solution to \eqref{a3.4},
there exists constant $C^*>0$ such that
$$
h''(r)<C^*,\  \ h''''(r)\le C^* , \ \  h''(r)\ge 1/C^*\ \ \forall\ \ r\in [0, {\Small\text{$\frac 12$}}],
$$
and $h'(r)\ge r/C^*\ \forall\ r\in [0, \frac 12].$ 
We also allow that $C_3$ and $C_4$ depend on $C^*$.

\begin{proof}
Denote 
$$S(r) =  2h(r)h''(r) - (h'(r))^{2} , $$
$$p(r) =  h(r)^{\frac{n + 4}{2}}\left(h'(r)\right)^{n - 2} .$$
A direct computation gives
\[
\frac{p'(r)}{p(r)} = \frac{n + 4}{2}\frac{h'(r)}{h(r)} +(n - 2)\frac{h''(r)}{h'(r)}.
\]
Hence the ODE \eqref{a3.4} is equivalent to
\beq\label{a3.8}
(pg')'(r) + \frac{n(n + 1)}{2}\frac{h''(r)}{h(r)} p(r)g(r) = 0.  
\eeq
Since $h''\geq 0$, $h\geq 1$, $p > 0$ and $g > 0$, we have $(pg')'\leq 0$, so $pg'$ is non-increasing on $(0,1]$. 
Because $h$ is even in $r$, both $\frac{h'}{r}$ and $S(r)$ are even and smooth, 
hence \eqref{a3.5} shows that $g$ is even; in particular $g'(0) = 0$. 
Therefore $pg'\leq pg'(0) = 0$ and
\beq\label{a3.9}
g'(r)\leq 0\qquad \forall \ r\in (0,1].  
\eeq
Hence $g$ is decreasing and $0< g(1)\leq g(r)\leq g(0)$.

Write \eqref{a3.4} in the form
\beq\label{a3.10}
g''(r) + A(r)g'(r) + B(r)g(r) = 0, \quad 
\eeq
where
\[
A(r) = \frac{n + 4}{2}\frac{h'(r)}{h(r)} +(n - 2)\frac{h''(r)}{h'(r)},\qquad B(r) = \frac{n(n + 1)}{2}\frac{h''(r)}{h(r)}.
\]
The integrals of $A$ and $B$ admit bounds depending only on $n,C_1,C_2$ and $C^*$:
\begin{align*}
\int_{r_0}^1\frac{h'}{h} dr &= \log h(1) - \log h(r_0)\leq \log C_1, \\
\int_{r_0}^1\frac{h''}{h'} dr &= \log h'(1) - \log h'(r_0)\leq \log \frac{C_2}{h'(r_0)}, \\
\int_{r_0}^1\frac{h''}{h} dr &\leq \int_{r_0}^1h''(r)dr = h'(1) - h'(r_0)\leq C_2.
\end{align*}
where $r_0\in [\frac 14, \frac 12]$. We may fix $r_0=\frac 12$.
Hence
\beq\label{a3.11}
\int_{r_0}^1A(r)dr + \int_{r_0}^1B(r)dr\leq L = L(n,C_1,C_2, C^*), 
\eeq
for a finite constant $L$.

Now we need the following lemma.

\begin{Lemma}\label{L3.3}
There exists $K = K(L)\geq 1$ such that
\beq\label{a3.12}
K^{-1}\big(g(r_0) + |g'(r_0)|\big)\leq g(1) + |g'(1)|\leq K\big(g(r_0) + |g'(r_0)|\big). 
\eeq
\end{Lemma}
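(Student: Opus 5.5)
This is a Grönwall estimate for the first-order system equivalent to the linear second-order ODE \eqref{a3.10}. The coefficients $A$ and $B$ are determined by the known function $h$. By construction $A\ge0$, since $h,h'>0$ and $h''\ge0$ on $(0,1]$, and $B\ge0$. We only need their integrals over $[r_0,1]$, which are controlled by \eqref{a3.11}. Set $Y(r)=(g(r),g'(r))$ and $E(r)=|g(r)|+|g'(r)|$. Then
\[
Y'=\begin{pmatrix}0&1\\-B&-A\end{pmatrix}Y ,
\]
and the matrix norm of the coefficient is at most $1+A+B$.

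\textbf{Upper bound.} For $r\in[r_0,1]$ I would compute the Dini derivative of $E$:
\[
\frac{d^+}{dr}E\le |g'|+|g''|\le |g'|+A|g'|+B|g|\le (1+A+B)E .
\]
Grönwall's inequality on $[r_0,1]$ then gives
\[
E(1)\le E(r_0)\exp\Big(\int_{r_0}^1(1+A+B)\,dr\Big)\le e^{1+L}E(r_0).
\]
This is the right-hand inequality in \eqref{a3.12} with $K=e^{1+L}$. Note that $g>0$ by \eqref{a3.5}, so $E=g+|g'|$ as in the statement.

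\textbf{Lower bound.} I would run the same argument backwards, with $s=1-r$, or equivalently use the lower Dini derivative. From
\[
\frac{d}{dr}E\ge -(|g'|+|g''|)\ge -(1+A+B)E ,
\]
integrating from $r_0$ to $1$ gives $E(1)\ge e^{-(1+L)}E(r_0)$.

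There is an alternative route that is more in the spirit of the monotonicity already established. By \eqref{a3.9} we have $g'\le0$, and \eqref{a3.8} says $pg'$ is nonincreasing. So $|g'|$ could be handled through $p$, with $g$ bounded between $g(1)$ and $g(r_0)$. But that route needs control on $p(1)/p(r_0)$, which again reduces to $\int A$. The Grönwall route is cleaner.

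\textbf{Main obstacle.} The only delicate point is making sure the integrals of $A$ and $B$ are genuinely bounded, and not merely bounded above. Since $A,B\ge0$ on $[r_0,1]$, the one-sided bound \eqref{a3.11} suffices. The constant $\log(C_2/h'(r_0))$ is finite because $h'(r_0)\ge r_0/C^*$ by Remark 3.1. With these checked, $K=e^{1+L}$ depends only on $L$, as claimed.
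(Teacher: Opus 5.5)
Your proof is correct, and the setup and the upper bound match the paper's: both pass to the system $Y'=M(r)Y$, use $\|M(r)\|\le 1+A+B$ with $A,B\ge 0$, and apply Grönwall on $[r_0,1]$ using \eqref{a3.11}.

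The difference is in the lower bound. The paper works with the fundamental matrix $\Phi$, normalized by $\Phi(r_0)=I$. It first uses Liouville's formula to get $\det\Phi(1)=\exp(-\int_{r_0}^1A\,dr)\ge e^{-L}$. It then uses the $2\times2$ adjugate formula, $\|\Phi(1)^{-1}\|\le C\|\Phi(1)\|/\det\Phi(1)$, which gives a constant of order $e^{1+2L}$. You instead run the same Grönwall argument backward in $r$. That works because only $\int_{r_0}^1\|M\|\,dr$ enters, not the direction of integration.

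Your route is more elementary and does not use that the system is $2\times 2$. It also gives the cleaner constant $K=e^{1+L}$. The determinant step in the paper is not needed: $\Phi^{-1}$ satisfies $(\Phi^{-1})'=-\Phi^{-1}M$ and so obeys the same Grönwall bound directly.

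You can also drop the Dini derivatives. Since $g>0$ and $g'\le0$ on $(0,1]$ by \eqref{a3.9}, we have $E=g-g'$, which is $C^1$. Then $E'=(1+A)g'+Bg$, so $|E'|\le(1+A+B)E$ holds classically.
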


\begin{proof}
Set $Y = (g,g')^{\mathrm{T}}$. Then $Y' = M(r)Y$ with
\[
M(r) = \begin{pmatrix}0 & 1\\ -B(r) & -A(r)\end{pmatrix}.
\]
Since $A,B\geq 0$, we have $\| M(r)\| \leq 1 + A(r) + B(r)$ and thus, by \eqref{a3.11},
\[
\int_{r_0}^1\| M(r)\| dr\leq 1 + L =: L_*.
\]
Let $\Phi$ be the fundamental matrix with $\Phi (r_0) = I$. 
From the Grönwall inequality, $\| \Phi (1)\| \leq e^{L_*}$.

Also 
$$(\det\Phi)^{\prime} = \operatorname{tr}(M)\det\Phi = - A\det\Phi , $$ 
hence
\[
\det\Phi (1) = \exp \Bigl(-\int_{r_0}^1A(r)dr\Bigr)\geq e^{-L}.
\]
In dimension 2, 
$$\| \Phi (1)^{- 1}\| \leq C\| \Phi (1)\| /\det\Phi (1) \leq Ce^{L_*}e^L , $$ 
where $C$ is an absolute constant. Therefore,
\[
\| Y(1)\| \leq \| \Phi (1)\| \| Y(r_0)\| \leq e^{L_*}\| Y(r_0)\|,
\]
\[
\| Y(1)\| \geq \| \Phi (1)^{-1}\|^{-1}\| Y(r_0)\| \geq C^{-1}e^{-(L_* + L)}\| Y(r_0)\| .
\]
where $\| Y\| = g + |g'|$. 
We obtain \eqref{a3.12} with $K = K(L)$.
\end{proof}

We proceed to obtain a positive lower bound for $g(1)$.
Let $h=\phi^2$, then
\beq\label{a3.13}
{\begin{split}
g & = \phi^{-\frac{(n+1)^2}{n+2}} \Big(\frac{\phi'}{r}\Big)^{-\frac{n+1}{n+2}(n-2)}  \big[\phi''\big]^{-\frac{n+1}{n+2}} \\
 & = r^{-\beta} \phi^\gamma \phi'^\beta \phi''^\alpha \\
\end{split}}\eeq
where we ignore a coefficient $2^{-\frac{(n+1)^2}{n+2}}$ and temporarily denote 
$\alpha=-\frac{n+1}{n+2}$, $\beta= -\frac{n+1}{n+2}(n-2)$, $\gamma= -\frac{(n+1)^2}{n+2}$.
Differentiating the above formula,  we have
\beq\label{a3.14}
{\begin{split}
g'(r)= & \alpha \phi'''\phi''^{\alpha-1} \phi'^\beta \phi^\gamma r^{-\beta} 
+\beta \phi''^{\alpha+1} \phi'^{\beta-1} \phi^\gamma r^{-\beta}\\
 &+ \gamma \phi''^{\alpha} \phi'^{\beta+1} \phi^{\gamma-1} r^{-\beta}
 -\beta \phi''^{\alpha} \phi'^\beta \phi^\gamma r^{-\beta-1} .
 \end{split}} \eeq
By Lemma \ref{L3.3},   $|g'(r)| $ is bounded for $r$ near 1. 
Hence
$$ \phi'''\phi''^{\alpha-1}  
+\hat \beta (r) \phi''^{\alpha+1} 
+ \hat \gamma(r) \phi''^{\alpha} =\hat\delta(r).$$
where 
$\hat\beta(r)=\frac{\beta}{\alpha \phi'} $, 
$\hat \gamma(r)=\frac{\gamma \phi'}{\alpha\phi}-\frac{\gamma}{\alpha r},$
and $\hat \delta(r)=\frac{g'(r) r^\beta}{\alpha \phi'^\beta \phi^\gamma  }$.
We write the above formula as 
\beq\label{a3.15}
\phi'''
+\hat \beta (r) \phi''^2 
+ \hat \gamma(r) \phi'' =\hat\delta(r)\phi''^{1-\alpha}.\eeq
Note that $\hat \beta$ is positive and bounded.
Hence $2\beta_0:=\inf\{\hat\beta(r)\ |\ \frac 12 <r<1\}>0$.
Note that $1-\alpha<2$, $|\hat\gamma(r)|$ and $|\hat\delta(r)|$ is bounded for $\frac 12\le r\le 1$.
We obtain the inequality
$$ \phi''' +\beta_0 \phi''^2 \le C_0\ \ \ \forall\ r\in ({\Small\text{$\frac 12$}}, 1). $$
It follows that
$$
\phi''(1)\le \phi''({\Small\text{$\frac 12$}})+{\Small\text{$\frac 12$}}C_0.
$$
In the above estimate, the number 1 can be replaced by any number $r\in [\frac 12, 1]$.
Recall that $h=\phi^2$, hence
\beq\label{a3.16}
h''(r)\le C^*\ \ \forall\ r\in [{\Small\text{$\frac 12$}}, 1]. 
\eeq

Estimate \eqref{a3.16} implies that $g$ has a positive lower bound for $r\in [\frac 12, 1]$.
To obtain an upper bound for $|h'''(r)|$ for $r\in [\frac 12, 1]$,
it suffices to differentiating $g$  as in \eqref{a3.14} to derive \eqref{a3.15}.
This completes the proof of Theorem 1.1.
\end{proof}

\noindent{\bf Remark 3.2.}\ 
When $n=10$,  the function $h^*(r)=r^9$ plays a crucial role in the proof.
For dimension $n\le 9$, there is no such function and we cannot rule out 
the scenario of Case 2).

\vskip5pt
 
\noindent{\bf Remark 3.3.}\ 
Once a counterexample $u$ in dimension 10 is obtained, 
a function of the form $u(\tilde x)+\frac 12 |\hat x|^2$ provides a
counterexample in higher dimensions, where
$\tilde x=(x_1, \cdots, x_{10})$ and $\hat x=(x_{11}, \cdots, x_n)$.

\vskip5pt

\subsection{\bf A Bernstein type theorem}

By the counterexample in Theorem \ref{T3.1}, 
we need to impose superlinear growth condition 
to insure that an entire convex solution to equation \eqref{a3.1} is a quadratic function,
at least for dimensions $n\ge 10$. 

\begin{Theorem} \label{Thm3.2}
Let $u$ be  an entire smooth convex solution to \eqref{a3.1}.
Then $u$ is a quadratic polynomial
if there exists a positive constant $\delta>0$ such that either 
\beq \label{grow-a}
u(\lambda x)\ge (\lambda+\delta)u(x) \ \ \text{as}\ \ |x|\to\infty,
\eeq
for a fixed constant $\lambda>1$; or
\beq \label{grow-b}
x\cdot Du(x)\ge (1+\delta) u(x)\ \ \text{as}\ \ |x|\to\infty .
\eeq
 \end{Theorem}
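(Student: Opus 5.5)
The plan is to reduce Theorem \ref{Thm3.2} to the known Bernstein theorem under strict convexity at infinity, via a blow-down argument. Condition \eqref{grow-b} implies \eqref{grow-a}: integrating $\frac{d}{ds}\log u(sx)\ge \frac{1+\delta}{s}$ along rays gives $u(\lambda x)\ge \lambda^{1+\delta}u(x)$ for large $|x|$, and $\lambda^{1+\delta}>\lambda$. So it suffices to treat \eqref{grow-a}.

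Normalize so that $u(0)=0$ and $Du(0)=0$, hence $u\ge 0$. For large $R$ consider the section $S_R=\{u<R\}$. Iterating \eqref{grow-a} gives $u(\lambda^k x)\ge \prod_{j<k}(\lambda+\delta\lambda^{-j}\cdots)$, which grows strictly faster than $\lambda^k$, i.e. superlinearly along every ray. Together with convexity this shows each $S_R$ is bounded, so $u$ is proper.

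We then use the affine invariance of \eqref{a3.1}. Let $T_R$ be the affine map normalizing $S_R$, so that $T_R(S_R)$ has minimum ellipsoid a ball, and set $u_R(x)=R^{-1}u(T_R^{-1}x)$. Each $u_R$ is an affine maximal function on a normalized section with boundary value $1$. The key step is to show the rescalings are uniformly strictly convex: the tangent-plane sections $\{u_R<h\}$ must lie in a ball $B_{r(h)}$ with $r(h)\to 0$ as $h\to 0$, uniformly in $R$. The growth condition is designed for exactly this. If the limit $u_\infty$ of a subsequence contained a segment on which it is affine, say $u_\infty=0$ on a segment through $0$, then undoing the scaling would give points $x$ with $u(\lambda x)$ comparable to $\lambda u(x)$, i.e.\ linear growth, contradicting \eqref{grow-a}. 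The counterexample of Theorem \ref{T3.1} is linear along the $x_n$ axis, which is why $\delta=0$ fails.

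Given uniform strict convexity, the determinant bound of Lemma \ref{L2.3}, applied through Lemma \ref{L2.4}, gives uniform bounds $C^{-1}\le\det D^2 u_R\le C$ on the half sections. Proposition \ref{P2.1} then yields uniform $C^{4,\alpha}$ estimates. Scaling back, the sections of $u$ have controlled eccentricity, and $D^2u$ and its derivatives satisfy decay estimates. The argument concludes as in \cite{[TW1]}, using the Calabi-type argument that Euclidean and affine completeness imply the affine metric has nonnegative Ricci curvature and that an affine-invariant quantity (for example $|Dw|^2_g$) vanishes. Alternatively, the estimates show that $D^3u$ at scale $R$ is $O(R^{-\epsilon})$, so $D^3u\equiv 0$ and $u$ is quadratic.

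The main obstacle is the strict convexity step: turning the one-parameter ray condition \eqref{grow-a} into uniform strict convexity of all normalized sections. I would first try a contradiction argument. Take a degenerate limit $u_\infty$ whose zero set contains a segment or higher-dimensional face $T$. Use the affine maximality of the limit, together with Remark 2.2 and the estimate \eqref{a2.15}, to show that $u_\infty$ must grow at most linearly in the directions of $T$, i.e.\ like the cone $|x_n|$ in Example 2. Then show that this linear growth, transported back to $u$ by the scalings $T_R$, violates \eqref{grow-a} at a sequence of points escaping to infinity.
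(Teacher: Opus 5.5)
Your key step, the strict convexity of the normalized limits, fails as argued. Passing \eqref{grow-a} to a limit only gives the ratio inequality $u_\infty(\lambda x)\ge(\lambda+\delta)u_\infty(x)$, taken with respect to the dilation centre.

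\begin{itemize}
\item On the configuration you want to exclude, $u_\infty=0$ on a segment through $0$, this reads $0\ge 0$ and gives nothing.
\item On a segment where $u_\infty$ is affine but which does not lie on a ray from the centre, it compares values on different lines.
\item The inequality alone cannot detect flatness. For example, $(|x_n|-a)_+^2+|x'|^2$ satisfies it whenever $\lambda^2\ge\lambda+\delta$, yet it is flat at its minimum.
\end{itemize}

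What the growth condition actually excludes is a ray from the centre along which $u$ is positive and asymptotically linear. Example 2, $\sqrt{|x'|^9+x_{10}^2}$, is the model: its sections at the vertex do shrink to the vertex, but it is linear along the $x_{10}$-rays. Your fallback, showing that every flat face of a degenerate affine maximal limit forces linear growth in its directions, is an unproved structural claim that nothing in the paper supplies. Remark 2.2 only excludes the specific crease $u\ge C\max(|x'|^2,|x_n|)$; it does not produce linear growth from flatness. You also have the order reversed: you invoke Lemmas \ref{L2.3} and \ref{L2.4} after strict convexity, whereas their point is that they need none.

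The missing idea is to get strict convexity from two-sided Monge--Amp\`ere bounds plus Caffarelli's Lemma \ref{L2.2}. For that you must blow down $v=u^*$ rather than $u$. The lower bound for $\det D^2u$ can only come from Lemma \ref{L2.3}, case (ii), applied to $v$ on one of its own sections, where $v$ is affine on the boundary. The paper proceeds as follows.

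\begin{enumerate}
\item Take centred sections $\{v<\ell_k\}$ of height $h_k\to\infty$ \cite{[AC], [C92]} and normalize them linearly to $v_k(z)=h_k^{-1}v(T_k^{-1}z)$.
\item Observe that $v_k^*(x)=h_k^{-1}u(h_kT_k^{T}x)$ is a linear rescaling of $u$. Hence \eqref{grow-a} passes to $v_k^*$ and to $u_0=v_0^*$. Your normalization $u(0)=0$, $Du(0)=0$ with affine $T_R$ does not preserve \eqref{grow-a} exactly.
\item Use the growth condition exactly once: a vertical line in the limit graph of $v_0$ over $\partial\Om_0$ would make $u_0$ asymptotically linear along a ray, which is impossible. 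So $|Dv_k|$ stays bounded on the normalized sections.
\item Lemma \ref{L2.3} (case (ii), $\hat f=0$) then gives $\det D^2v_0\le C$, and Lemma \ref{L2.4} applied to $u_0$ gives $\det D^2v_0\ge c>0$.
\item Lemma \ref{L2.2} and Caffarelli's $C^{2,\alpha}$ theory make $v_0$ strictly convex and smooth.
\item The blow-down argument of \cite{[TW1]} finishes the proof; this is the decay of an affine invariant under rescaling, in line with your final step.
\end{enumerate}
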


As mentioned before, condition \eqref{grow-b} was inspired by \cite{[EH90]}.
We point out that condition \eqref{grow-b} implies  \eqref{grow-a}.
Indeed,  let $x=te$ and denote $g(t)=u(re)$, where $e$ is a unit vector.
Then  \eqref{grow-b} implies that  $tg'\ge (1+\delta) g$. Hence 
$$\int_a^b \frac{g'}{g}\ge \int_a^b \frac{1+\delta}{t} dt.$$
Hence $g(b)\ge \big(\frac ba\big)^{1+\delta} \, g(a)$.
Let $b=\lambda a$ and $a=|x|$, we obtain \eqref{grow-a}.

\vskip5pt

\begin{proof}[Proof of Theorem \ref{Thm3.2}] 
Condition \eqref{grow-a} implies $u$ is superlinear and $Du(\R^n)=\R^n$.
Let $v(y)=:u^*(y)$ be the Legendre transform of $u$. Then $v$ satisfies
\beq\label{LT}
{\begin{split}
 & V^{ij}\omega_{ij}(y)=0\ \ \text{in}\ \R^n,\\
 & \omega=\big[\det D^2 v\big]^{-\frac{1}{n+2}} .
 \end{split}} \eeq
Choose a sequence $h_k\to \infty$ as $k\to\infty$. 
Let $\ell_k$ be a linear function such that 
$$S^c_{0, h_k}[v]=\{y\in\R^n\ |\ v(y)<\ell_k(y)\}$$
is a convex domain with its mass centre at the origin.
The existence of such a linear function $\ell_k$ was proved in \cite{[AC], [C92]}. 
For brevity, denote $S_k=S^c_{0, h_k}[v]$.

Let $T_k: y\to z$ be a linear transform such that the minimum ellipsoid of  $T_k(S_k)$ is the unit ball,
and define 
$$v_k(z)=h_k^{-1} v(T_k^{-1} z) .$$
Then 
\bei
\item [a)] $v_k(0)=1$,
\item [b)] $0\le v_k\le c_n$ in $T_k(S_k)$,  
\item [c)] $v_k=h_k^{-1}\ell_k$   on $\p T_k(S_k)$,
\item [d)] $v_k$ satisfies equation \eqref{LT}.
\eei
where the upper bound in (b) is due to that $T_k(S_k)$ is normalized.
Denote
$$\M^*_k=\{(z, v_k(z))\in\R^{n+1}\ |\ z\in \R^n\}$$
be the graph of $v_k$.
Let $Q^*_k$ be the convex body enclosed by $\M^*_k$ and the hyperplane  $h_k^{-1}\ell_k$.
By the above normalization, there exists $R>r>0$, depending only on $n$, such that 
$$B_r({\Small\text{$\frac 12$}} e_{n+1})\subset Q^*_k\subset B_R({\Small\text{$\frac 12$}}  e_{n+1}), $$
where $B_r(\frac 12 e_{n+1})$ is a ball in $\R^{n+1}$ with radius $r$ and center $\frac 12 e_{n+1}$).
 
Therefore, by passing to a subsequence, we may assume that $T_k(S_k)$ converges to a convex domain $\Om_0$
and  $\M^*_k$ converges to a hypersurface $\M^*_0$ (in the sense of Hausdorff).

Restrict $v_k$ to $\Om_0$. 
By our normalization, $v_k$ is locally uniformly bounded in $\Om_0$.
By convexity,  $v_k$ converges locally uniformly to a convex function $v_0$ in $\Om_0$.

We claim that the gradient of $v_0$ is uniformly bounded in $\Om^*_0$.

If the claim is not true, then there is a boundary point 
$y^*\in \pom^*_0$, and a vertical line 
$$L^*=\{y^*+te_{n+1}\ |\ t\ge t^*\ge 0\} ,$$ 
such that $L^*$ is contained in $\M^*_0$.
By a linear transform of the coordinates, there is no loss of generality in assuming that 
\beq\label{vL}
\text{$y^*=e_1$ and $\Om^*_0\subset\{y_1<1\}$. }
\eeq

Let $u_0$ and $u_k$ be the Legendre transform of $v_0$ and $v_k$.
Then $u_0$ and $u_k$ are affine maximal, 
$u_0$ is the limit of $u_k$, and $u_k$ is smooth and locally uniformly convex.
Note that the condition \eqref{grow-a} is affine invariant.
Hence $u_0$ and $u_k$ also satisfy \eqref{grow-a}.

By \eqref{vL}, $u_0(te_1)$ is asymptotically linear for $t>0$. 
But  \eqref{grow-a} implies that $u_0(te_1)$ is superlinear in $t$.
We reach a contradiction and the claim follows.

Therefore by Lemma \ref{L2.3}, we obtain 
$\det D^2 v_0<C_1.$
On the other hand, by the a priori estimate for $u_0$  (Lemma \ref{L2.4}), 
and by the Legendre transform, we also have $\det D^2 v_0\ge C_2>0$.
By Lemma \ref{L2.2}, $v_0$ is strictly convex and $C^1$ smooth.
By the regularity theory for the Monge-Amp\`ere equation \cite{[C2]}, 
$v_0$ is uniformly convex and $C^\infty$ smooth.

By a blow-down argument (see the proof of Theorem 2.1 in \cite {[TW1]}),
we conclude that $v$ is a quadratic polynomial.
\end{proof}

\section{Approximation by smooth solutions}

Let $u\in\ol S[\phi, \Om]$ be the  maximizer of \eqref{a1.9}. 
In \cite[Theorem 6.1]{[TW2]}, we proved that $u$ can be approximated by smooth affine
maximal functions. In this section we prove a stronger
version of approximation, under very mild conditions on $\phi$ and $\Om$. 
This stronger version is critical in the proof of strict convexity for the affine maximal hypetrsurfaces.

Let $\Om$ be a bounded, open set in $\R^n$ with $|\pom|=0$. 
Let $\phi$ be a convex, uniformly Lipschitz function defined in
$\Om^{\delta}$ for some $\delta>0$, satisfying \eqref{phib}.
Let $B=B_R(0)$ be a ball such that $B_{R-1}(0)$ contains $\Om^{\delta}$.

In our proof below we need a convex function $\phi_*$ in $\bom$, given by
\beq  \label{a4.1}  
\phi_*(x)=\sup\{\L(x)\ |\ \L\ \text{is a supporting plane of $\phi$
                           at some point   on $\p\Om$}\, \}.
\eeq
The function $\phi_*$ is convex, as it is the supremum of a family of affine 
functions. Since $N_{\phi_*}(\Om)\subset \overline{N_\phi(\Om)}$, we have
$\phi_*\in\ol S[\phi, \Om]$. On the other hand, for any function
$u\in\ol S[\phi, \Om]$, by the convexity of $\phi$, we have $u\ge \L$
for any supporting plane $\L$ of $\phi$ at some point in $\p\Om$. Hence
\beq  \label{a4.2}  
u\ge \phi_*\ \ \text{in}\ \ \Om
\eeq
for any $u\in\ol S[\phi, \Om]$.

Since $\phi$ is uniformly Lipschitz in $\Om^\delta$, 
we can choose a large constant $c^*$ such that any
supporting plane $\L$ of $\phi$ in $\Om^\delta$ satisfies $\L\le c^*$ on
$\p B$. Therefore we can extend $\phi$ to $B$ such that 
$\phi$ is convex in $B$ and $\phi=c^*$ on $\p B$. 
The extension is given by
\beq  \label{a4.3}  
\phi(x)=\sup\{\L(x)\ |\ \L\ \text{is an affine function},
   \ \L\le \phi\ \text{in}\ \Om^\delta
      \ \text{and}\ \L\le c^*\ \text{in}\ B\}.
\eeq
The Monge-Amp\`ere measure $\det D^2 \phi$ vanishes in $B-\overline{\Om^\delta}$, and may have singular part 
on the boundary $\pom^\delta$.

Let
\beq  \label{a4.4}  
H(t) =(1-t^2)^{-2n}
\eeq
be a smooth function in the interval $(-1, 1)$. When $|t|\ge 1$, we
can define formally $H(t)=+\infty$. We call $H$ the penalty
function, as it forces the solution $u$ to stay close to $\phi$.
In \cite{[TW2]} we employed a different penalty function but \eqref{a4.4} 
is preferable here because  the closeness estimate \eqref{a4.17} follows
more directly.

We now extend the function $f$ in \eqref{a1.1} to $B\times\R$ so that
\beq  \label{a4.5}  
f(x,u)=\left\{ \begin{array}{ll}
f(x) \ \ \ & x\in\Om,\\
H'(u-\phi(x))\ \ \ \ & x\in B-\Om.
\end{array}  \right.  
\eeq
The following Lemma \ref{L4.1} corresponds to Lemma 6.1  in \cite{[TW2]}.

\begin {Lemma} \label{L4.1} 
Suppose $\pom$ is Lipschitz continuous. Then
there is a uniformly convex solution $u\in W^{4, p}(B)$ (for all $p<\infty$) 
to the boundary value problem
\beqn  \label{a4.6}  
L[u] & =& f(\cdot, u)\ \ \ \text{in}\ \ B,  \\
  u  & =&  \phi\ (=c^*)\ \ \ \text{on}\ \ \p B,\no\\
  w  & =& 1 \ \ \ \text{on}\ \ \p B .\no  
  \eeqn
\end{Lemma}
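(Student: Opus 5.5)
The plan is to apply Proposition \ref{P2.2} on the ball $B$, which is smooth and uniformly convex, with boundary data $u=c^*$ and $w=1$ on $\p B$. The right-hand side is $f(x,u)$ from \eqref{a4.5}. The obstruction is that Proposition \ref{P2.2} needs $f\in L^\infty(B\times\R)$ and non-decreasing in $u$. Monotonicity holds: in $\Om$, $f$ does not depend on $u$, and in $B-\Om$ the function $H'(t)=4nt(1-t^2)^{-2n-1}$ is increasing on $(-1,1)$, since $H$ is convex there.

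Boundedness fails, because $H'$ blows up at $t=\pm1$. I would therefore truncate. For $m$ large, let $H_m$ be a convex function agreeing with $H$ on $[-1+\frac1m,1-\frac1m]$ and extended with linear (hence bounded-slope) continuation of $H'$ clipped at $\pm H'(1-\frac1m)$. Define $f_m$ accordingly. Each $f_m$ is bounded and non-decreasing in $u$, so Proposition \ref{P2.2} gives a unique solution $u_m\in W^{4,p}(B)$ for all $p$, uniformly convex up to $\p B$.

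Next come estimates independent of $m$:
\begin{itemize}
\item[(1)] A uniform $L^\infty$ bound and gradient bound for $u_m$ in $B$. I would get these by comparison with $\phi$ and with a large paraboloid, using convexity and $u_m=c^*$ on $\p B$. The monotone structure of $f_m$ gives a maximum principle: where $u_m-\phi$ is very large in $B-\Om$, the term $f_m$ is large positive, which pushes the solution down. I would realize this through the variational characterization. The solution $u_m$ maximizes $A(v)-\int F_m(x,v)$ with $F_m'=f_m/(n+2)$, and comparing with the competitor $\phi$ yields $\int_{B-\Om}H_m(u_m-\phi)\le C$ independent of $m$.
\item[(2)] The closeness estimate $|u_m-\phi|\le 1-\epsilon_0$ in $B-\Om^{\delta/2}$, uniformly in $m$. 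This comes from the integral bound on $H_m(u_m-\phi)$ together with the strong singularity $(1-t^2)^{-2n}$, which is nonintegrable in $n$ dimensions against the Lipschitz modulus of $u_m-\phi$. This is why the exponent $2n$ was chosen. Once $|u_m-\phi|\le 1-\epsilon_0$, we have $f_m=f$ for large $m$ on the relevant region. Near $\p\Om$ a similar argument applies, using the Lipschitz regularity of $\pom$ to have cone-shaped regions of $B-\Om$ adjacent to every point.
\item[(3)] A uniform bound on $\sup|f_m(\cdot,u_m)|$, which follows from (2).
\item[(4)] Uniform determinant bounds: upper bounds via Lemma \ref{L2.3}/\ref{L2.3a} and lower bounds from the boundary condition $w=1$ together with the global estimates of \cite{[TW2],[TW3]}.
\end{itemize}

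With these estimates, the global $W^{4,p}$ estimates from the second boundary value problem (the proof of Proposition \ref{P2.2}) are uniform in $m$. Passing to the limit gives $u\in W^{4,p}(B)$, uniformly convex, solving \eqref{a4.6} with $f$ itself, since eventually $|u-\phi|<1$ and $H'$ is finite there.

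The main obstacle is step (2), the uniform closeness $|u_m-\phi|<1-\epsilon_0$ near $\pom$ and in $B-\Om$, where $\phi$ is merely Lipschitz. First I would derive the energy bound $\int_{B-\Om}H(u_m-\phi)\le C$ by comparison with a competitor, and then convert it into a pointwise bound using the Lipschitz bound on $u_m-\phi$ and the choice of exponent $2n$. Failing that, I would fall back on the argument of Lemma 6.1 in \cite{[TW2]}.
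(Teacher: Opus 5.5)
Your route differs from the paper's in structure, and with the repairs below it works.

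\textbf{How the two arguments compare.} The paper never truncates. It proves an a priori bound $|f(x,u)|\le C$ for every solution of the singular problem \eqref{a4.6}, and existence then comes from a degree argument. That bound requires a chain of estimates up to $\p B$:
\begin{itemize}
\item $f\le C$ near $\p B$;
\item a bound on $\det D^2u$ near $\p B$, obtained from $z=\log\frac{w}{u-\ell}$ and $w=1$;
\item the barrier gradient bound \eqref{a4.9}.
\end{itemize}
It then uses two separate integration-by-parts identities. The first, on $\{u<\ell\}$ with $\ell$ a tangent plane of $\phi$, excludes $u-\phi\approx-1$. The second, on the sets $\W_\delta$, excludes $u-\phi\approx 1$. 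Each is contradicted by $|B_\eps(y_0)\cap(B-\Om)|\ge C\eps^n$ against the blow-up of $H'$.

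You instead truncate so that Proposition~\ref{P2.2} applies directly, with no degree theory. You then replace both identities by a single energy comparison with the competitor $\phi$. Since $H$ blows up at both $t=\pm1$, one bound $\int_{B-\Om}H_m(u_m-\phi)\le C$ controls both sides, through the same counting $\eps^n\cdot\eps^{-2n}\to\infty$.

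Once $|u_m-\phi|\le 1-\eps_0$ with $\eps_0$ independent of $m$, the truncation is inactive for $1/m<\eps_0$. So $u_m$ itself solves \eqref{a4.6}, and your step (4) and the passage to the limit are unnecessary.

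What each approach buys: the paper's gives estimates for all solutions, including $\det D^2u$ and $|Du|$ up to $\p B$, without any variational principle. Yours is shorter but rests entirely on the maximizing property of $u_m$.

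\textbf{The variational principle as you state it is false.} Consider $A(v)-\int F_m(x,v)$ over convex $v$ with $v=c^*$ on $\p B$. Its first variation contains the boundary term $\th\int_{\p B}wU^{\gamma\gamma}\eta_\gamma$, which does not vanish when $w=1$, so $u_m$ is not even a critical point. On $\p B$ one has $U^{\gamma\gamma}=(u_\gamma/R)^{n-1}$, and the correct functional is
\[
J_m(v)=A(v)-\th\int_B F_m(x,v)-\th\int_{\p B}G(v_\gamma),\qquad G(s)=\tfrac1n R^{1-n}s^n ,
\]
where $F_m(x,t)=f(x)t$ in $\Om$ and $F_m(x,t)=H_m(t-\phi(x))$ in $B-\Om$. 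Each term is concave. Integrating by parts, and using that the singular part of $D^2v$ is nonnegative, gives $J_m(v)\le J_m(u_m)$ for convex Lipschitz $v$ with $v=c^*$ on $\p B$. Your comparison with $\phi$ then survives: the term $-\th\int_{\p B}G(\p_\gamma u_m)$ is $\le 0$, and $\int_{\p B}G(\phi_\gamma)$ is a fixed finite constant.

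\textbf{The gradient bound in (1) is not justified, but you do not need it.} A global gradient bound ``via a large paraboloid'' does not follow from convexity alone. A convex function equal to $c^*$ on $\p B$ can have unbounded normal derivative there, which is why the paper needs its Step 3. Instead, argue as follows.
\begin{itemize}
\item The lower bound $\inf u_m\ge -C$ comes from the energy inequality. By H\"older on the graph, $A(u_m)+\int_\Om|f||u_m-\phi|\le C(1+|\inf u_m|)$. On the other hand, $H_m$ has slope of order $m^{2n+1}$ beyond $-1+\frac1m$. Convexity gives $u_m\le(1-\tau)\inf u_m+\tau c^*$ on the ball $x_0+\tau(B-x_0)$, and with $\tau=1-\frac1{4R}$ this ball meets $B-B_{R-1}$ in a set of fixed measure.
\item Once $\inf u_m\ge -C$, interior Lipschitz bounds on $B_{R-1/2}$, together with the cone condition from the Lipschitz boundary $\pom$, handle points away from $\p B$.
\item Near $\p B$, the upper side follows from $u_m\le c^*$ and $\phi\ge c^*-C(R-|x|)$. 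The lower side follows from the same homothety with $\tau$ small.
\end{itemize}
Finally, drop the ``maximum principle'' remark. No such principle is available for this fourth-order equation; the variational inequality is what does the work.
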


\noo{\it Proof}. 
Let $u$ be a solution of \eqref{a4.6}. 
It suffices to prove that
\beq  \label{a4.7}  
|f(x, u)|\le C
\eeq
for some constant $C>0$ independent of $u$. 
Once $f$ is bounded, the regularity of solutions follows from
Proposition \ref{P2.2}, and the existence of solutions can be obtained by a
degree argument, as in \S5 of \cite{[TW2]}.
The proof below uses ideas from that of Lemma 6.1 \cite{[TW2]}
but as the function $H$ here is different from the one in \cite{[TW2]},
some changes are necessary. 
For clarity we divide the proof into a few steps.

\bei

\item [1.] 
By our choice of the function $H$ in \eqref{a4.4}, 
we have $u>\phi-1$ in $B-\Om$. 
By the convexity of $u$ we then have
$$
u\ge -C\ \ \ \text{in}\ \ B.
$$

\vskip5pt

\item [2.] 
Since $u=\phi$ on $\p B$, by the convexity of $u$ and $\phi$, 
we have $u\le \phi +1/2$ in a neighborhood of $\p B$, namely
$u(x)\le \phi(x) +1/2$ when $R-c_0<|x|<R$, 
where $c_0$ depends only on $\sup_B |D\phi|$. 
This implies by our choice of $H$ that 
$$
f(x, u)\le C\ \ \ \text{if}\ \ R-c_0<|x|<R.
$$

\vskip5pt

\item [3.] 
In this step we  prove that 
\beq \label{a4.8}
\det D^2 u(x)\le C\ \ \ \text{if}\ \ R-\frac{c_0}{2}<|x|<R.
\eeq
Indeed,   for any boundary point $x_0\in \p B$, we suppose by a
rotation of axes that $x_0=(R, 0, \cdots, 0)$. Let
$\ell(x)=c^*-\frac 12-a(x_1-R)$ be a linear function such that
$\ell(x_0)=u(x_0)-\frac 12$.  
We choose the constant $a= 1/c_0$ such that 
$\ell(x)>c^*>u(x)$ when $x_1\le R-c_0$.
Let $z=\log \frac {w}{u-\ell}$, which is function defined in the set $\{\ell<u\}$. 
If $z$ attains its minimum at a boundary point $\p B$, by the boundary
condition $w=1$ in \eqref{a4.6} we see that $z\ge -C$, 
namely $z$ is bounded from below. 
If $z$ attains its minimum at an interior point $y_0\in \{u>\ell\}$, then 
\beqs
&0= & z_i=\frac {w_i}{w}-\frac {(u-\ell)_i}{u-\ell}\ \ \text{at}\ y_0,\\
   &0\le & z_{ij}=\frac {w_{ij}}{w}-\frac {(u-\ell)_{ij}}{u-\ell}\
\eeqs
as a matrix. It follows that,
$$0\le u^{ij}z_{ij}
         =\frac {f}{[\det D^2 u]^\th}-\frac {n}{u-\ell}, $$
where $\th=1/(n+2)$. We have shown that $f\le C$ near $\p B$.
Hence $\det D^2 u\le C$ at $y_0$ and so $z \ge -C$, which in turn
implies that $\det D^2 u<C$ near $\p B$.

\vskip5pt

\item [4.] 
In this step we show that 
\beq  \label{a4.10}  
f\ge -C\ \ \text{in}\ B.
\eeq
Since $-C\le u\le c^*$ in $B$, $u=c^*$ on $\p B$,  
and $u$ satisfies \eqref{a4.8}, 
by constructing barriers we have $|Du|\le C$ on $\p B$. 
Hence by convexity we have 
\beq  \label{a4.9}  
|Du|\le C\ \ \ \text{in}\ B. 
\eeq
Recalling that $u=\phi = c^*$ on $\p B$,
we have $u(x)\ge \phi(x)-\frac 12$ when $R-c_1<|x|<R$, 
for some $c_1$ depending only on $\sup_B |Du|$.
Hence \eqref{a4.10} holds when $R-c_1<|x|<R$.  
Next we show  \eqref{a4.10} holds in the whole domain $B$.

\vskip5pt

If \eqref{a4.10} is not true, there is a point $y_0\in B-\Om$ such that 
$$f(y_0, u(y_0))\le  \inf_B f+1 .$$
If $f(y_0, u(y_0))$ is negatively very large, by our choice of $H$ in \eqref{a4.4},
necessarily $u(y_0)-\phi(y_0)$ is very close to $-1$. 
Let $\ell$ be a tangent plane of $\phi$ at $y_0$. 
Then obviously $\ell(y_0)>u(y_0)+\frac 12$ and $\ell\le u$ on $\p B$.
We compute
\beqs
\int_{\U_\ell} U^{ij}w_{ij}(u-\ell)
 & =& -\int_{\U_\ell} (u-\ell)_i w_j U^{ij} \\
 & = &-\int_{\p\,\U_\ell} (u-\ell)_i\gamma_j w U^{ij}
     +\int_{\U_\ell} w U^{ij} (u-\ell)_{ij}\\
 & <& n\int_B w\det D^2 u =n\int_B [\det D^2 u]^\th, 
\eeqs
where $\U_\ell=\{x\in B\ |\ u(x)<\ell(x)\}$, 
$\gamma$ is the unit outward normal. Hence
\beq  \label{a4.11}  
\int_{\U_\ell} f(x, u)(u-\ell)\le n\int_{\U_\ell}  [\det D^2 u]^\th
 = n\int_{\M_\ell} K^\th \le C, 
\eeq
where $\M_\ell$ is the graph of $u$ over the set $\U_\ell$, 
and $K$ is the Gauss curvature of $\M_\ell$. 

\vskip5pt

Recall that $\ell$ is  the tangent plane of $\phi$ at $y_0$.
Hence in the set $\U_\ell$, we have $u<\ell<\phi$ and $H'(u-\phi)<0$,
which implies that $f(x, u)\le C$ in $\U_\ell$.
Since $u(y_0)-\phi(y_0)$ is very close to $-1$, 
i.e.\ $|u(y_0)-\phi(y_0)+1|\le \eps$ for some small $\eps>0$. 
By convexity, $u$ and $\phi$ are uniformly Lipschitz continuous near $y_0$.
Hence 

\vskip3pt

\centerline{$|u(y_0)-\phi(y_0)+1|\le C\eps$ in $B_\eps(y_0)$.}

\vskip3pt

\noo By our definition of $f$ in $B-\Om$, it follows that 

\vskip3pt

\centerline{$f(x, u)\le -C\eps^{-4n}$ in  $B_\eps (y_0)\cap(B-\Om)$. }
\vskip3pt

\noo
Since $\Om$ is Lipschitz continuous, the measure 
$|B_\eps (y_0)\cap(B-\Om)|\ge C\eps^n$.
It follows that 
the left hand side of \eqref{a4.11} is positively very large
(since $u-\ell<-\frac 14$ near $y_0$). We reach a contradiction.

\vskip5pt

\item [5.] Finally, we show that 
\beq  \label{a4.12}  
f\le C\ \ \ \text{in}\ \ B. 
\eeq
Let $\delta>0$ be a small constant. Denote $\W_{\delta}=\{u<c^*-\delta\}$. 
We have
\beqs
\int_{\W_{\delta} } U^{ij}w_{ij}(u-c^*+\delta)
 & =& -\int_{\W_{\delta}} (u-c^*+\delta)_i w_j U^{ij} \\
 & \ge &-\int_{\p\W_{\delta}} (u-c^*+\delta)_i\gamma_j w U^{ij}\\
  & =&  -\int_{\p\W_{\delta}} u_\gamma w U^{\gamma\gamma} .
  \eeqs
Denote $K_s$  the Gauss curvature of $\p\W_{\delta}$. We obtain
 \beqs
\int_{\W_{\delta} } U^{ij}w_{ij}(u-c^*+\delta)
      & =& -\int_{\p\W_{\delta}} (u_\gamma)^n w K_s\\
  & \ge & - n\omega_n \big(\sup_B |Du|\big)^n\sup_{\p\W_{\delta}} w . 
   \eeqs
Sending $\delta\to 0$ and recalling that $w=1$ on $\p B$, we obtain
$$\int_B (-f(x, u)) (u-c^*)\le C.$$
If $f(x, u)$ is very large at some point $x_0\in B-\Om$, 
then $u(x)-\phi(x)$ is close to the constant $1$ at $x_0$, 
and also in a neighborhood of $x_0$ by convexity.
Hence by the Lipschitz continuity of $\pom$, we infer as above that 
the integral on the left hand side is very large. 
The contradiction implies $|f|<C$  and Lemma \ref{L4.1} is proved.
\hfill $\square$
\eei


\vskip5pt


\begin{Theorem} \label{Thm4.1} 
Let $\Om$ be a bounded, open set in $\R^n$
with $|\pom|=0$. Let $\phi$ be a convex function defined in
a neighborhood of $\Om$, satisfying \eqref{phib}. 
Let $u$ be the maximizer of \eqref{a1.9}. 
Then there exists a sequence of smooth, 
uniformly convex solutions $u_m\in W^{4, p}(\Om)\ (\forall\ p<\infty)$ to
the equation
\beq  \label{a4.13}  
L[u]=f+\beta_m \chi(D_m)\ \ \ \ \text{in}\ \ \Om
\eeq
such that
\beq  \label{a4.14}  
|u_m-u|\to 0\ \ \ \text{uniformly in}\ \Om,
\eeq
and for any compact, proper subset $K\subset N_\phi(\Om)$,
\beq  \label{a4.15}  
K\subset  N_{u_m}(\Om)\ \ \text{when $m>>1$}, 
\eeq
where $D_m=\{x\in \Om\ |\ \text{dist}(x, \pom)<2^{-m}\}$, $\beta_m\to +\infty $ is a constant sequence, 
$\chi$ is the characteristic function, and $N_u$ is the normal mapping. 
\end{Theorem}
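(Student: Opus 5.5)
Following the penalization strategy of \cite{[TW2]}, we obtain $u_m$ as restrictions to $\Om$ of solutions of a penalized second boundary value problem on the large ball $B$. We first reduce to Lipschitz domains. Choose open sets $\Om_m$ with Lipschitz boundary, $\Om\subset\Om_m$, and $\Om_m$ converging to $\Om$ in measure; this is possible since $|\pom|=0$. On $\Om_m$ we replace the penalty by $f_m(x,u)=f(x)+\beta_m\chi(D_m)$ for $x\in\Om$, and by $m\,H'(u-\phi)$ (or a sharper penalty with width $\to 0$) for $x\in B-\Om$. Lemma \ref{L4.1}, whose proof carries over to scaled penalty functions, then gives uniformly convex $W^{4,p}(B)$ solutions $\tilde u_m$ of \eqref{a4.6}. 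We set $u_m=\tilde u_m|_\Om$, which solves \eqref{a4.13}.

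Next we show $u_m\to u$. The estimates in Steps 1--5 of Lemma \ref{L4.1} give uniform Lipschitz bounds, so a subsequence converges uniformly to a convex $\bar u$ on $B$. The penalty forces $\bar u=\phi$ on $B-\Om$, since $|\tilde u_m-\phi|\to 0$ there. Hence $\bar u=\phi$ on $\pom$, and by \eqref{phib} we get $N_{\bar u}(\Om)\subset\overline{N_\phi(\Om)}$, that is, $\bar u\in\ol S[\phi,\Om]$. Each $\tilde u_m$ maximizes the penalized functional
\[
J_m(v)=A(v)-\tfrac{1}{n+2}\int_\Om (f+\beta_m\chi(D_m))v-\tfrac{1}{n+2}\int_{B-\Om}G_m(v-\phi),
\]
where $G_m'=mH'$. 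We compare with the competitor given by $u$ extended by $\phi$ outside $\Om$, which needs a boundary modification near $\pom$, where $w=1$ must hold. Using upper semicontinuity of $A$ and the smallness estimate \eqref{a2.2} on thin sets, we conclude $J(\bar u)\ge J(u)$. By Lemma \ref{L2.1}, $\bar u=u$, and uniqueness gives convergence of the whole sequence, proving \eqref{a4.14}.

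The hard part is \eqref{a4.15}, and this is why the large boundary term $\beta_m\chi(D_m)$ is present. Uniform convergence alone only yields $N_{u_m}\to N_u$ in a weak sense, and $N_u(\Om)$ may be much smaller than $N_\phi(\Om)$ because gradients can be lost at $\pom$. The plan is to argue by contradiction. Suppose $p\in K$ but $p\notin N_{u_m}(\Om)$. Then the support point of $\tilde u_m$ with slope $p$ lies outside $\Om$, say at $x_m\in B-\Om$. Since $\tilde u_m\approx\phi$ there and $p\in N_\phi(x_0)$ for some interior $x_0$ at positive distance from the boundary of $N_\phi(\Om)$, the plane $\ell$ of slope $p$ touching $\tilde u_m$ satisfies $\ell>\tilde u_m$ on a set $\U_\ell$ of definite size inside $\Om$ near $x_0$. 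This set meets the strip $D_m$ in measure comparable to $2^{-m}$, and there $\tilde u_m-\ell$ is bounded below in magnitude. Repeating the integration by parts of Step 4 gives
\[
\int_{\U_\ell} L[\tilde u_m](\tilde u_m-\ell)\le n\int[\det D^2\tilde u_m]^\th\le C.
\]
The left side contains $\beta_m\int_{D_m\cap\U_\ell}(\tilde u_m-\ell)$, which is of size $\beta_m 2^{-m}\cdot c$. If we choose $\beta_m\gg 2^{m}$, this term dominates and gives a contradiction. We therefore fix $\beta_m=4^m$. In the same step we must check that adding $\beta_m\chi(D_m)$ does not spoil the convergence argument. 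We expect to handle this through \eqref{a4.2}, which gives $u_m\ge\phi_*$ approximately, so that $\int_{D_m}\beta_m(u_m-\phi)$ stays controlled. Balancing these two requirements on $\beta_m$ against the width of the penalty is the main obstacle.
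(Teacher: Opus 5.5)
There is a genuine gap. Your argument for \eqref{a4.15} fails on a sign. The integration by parts of Step 4 of Lemma \ref{L4.1} on $\mathcal U_\ell=\{\tilde u_m<\ell\}$ gives only the one-sided bound $\int_{\mathcal U_\ell} f\,(\tilde u_m-\ell)\le n\int[\det D^2\tilde u_m]^{\theta}\le C$, because the boundary term has a sign in that direction only. Since $\tilde u_m-\ell<0$ on $\mathcal U_\ell$, this bound rules out $f$ very \emph{negative}, which is how Step 4 uses it (there $H'\to-\infty$). The large positive penalty instead contributes $\beta_m\int_{D_m\cap\mathcal U_\ell}(\tilde u_m-\ell)$, which is large and \emph{negative}, so no contradiction arises. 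Controlling large positive $f$ would need a Step 5 type estimate, and that requires an upper bound on $w$ along $\p\mathcal U_\ell$, which is available only near $\p B$.

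The geometric setup also does not hold together. A plane of slope $p$ touching $\tilde u_m$ from below has $\mathcal U_\ell=\emptyset$. The claim $|\mathcal U_\ell\cap D_m|\sim 2^{-m}$ with depth bounded below is unjustified for a general open set with $|\pom|=0$, so the choice $\beta_m=4^m$ has no basis.

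The paper proves \eqref{a4.15} variationally at fixed $m$. There $u_m$ is the unique maximizer of $J_m$ in $\ol S[\phi,\Om]$. It is compared with $u_{*m}$, the largest convex function lying below $u_m$ in $\Om$ and below $\phi_*$ in $D_m$; thus $u_{*m}=\phi_*$ in $D_m$ and $u_{*m}\le u_m$. The affine area and $f$-terms are uniformly bounded, which forces $\beta\int_{D_m}(u_m-\phi_*)\le C$. Hence $u_m\to\phi_*$ in $D_m$ as $\beta\to\infty$, and by convexity and \eqref{phib} this gives \eqref{a4.15}. The constant $\beta_m$ is then chosen large only qualitatively.

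The convergence step has the gap you flagged yourself and did not close. Comparing $\tilde u_m$ with $u$ extended by $\phi$ leaves the error term $\frac{\beta_m}{n+2}\int_{D_m}(u-\tilde u_m)$. In $D_m$, $\tilde u_m$ is pushed down toward $\phi_*$, while $u-\phi_*=O(2^{-m})$ there. So this error is of order $\beta_m2^{-m}|D_m|$: $O(1)$ for smooth $\Om$ when $\beta_m=4^m$, and possibly unbounded for rough $\Om$. Controlling $\int_{D_m}\beta_m(u_m-\phi)$ through \eqref{a4.2} bounds the wrong quantity.

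The missing idea is to flatten the competitors. Replace both the true maximizer and the limit $\ou$ by the largest convex functions lying below them in $\Om$ and below $\phi_{*m}$ in the strip $\mathring D_m=D_m\cup(\Om_{k_m}-\Om)$. These agree with $\phi_{*m}$ on the strip and lose only $o(1)$ affine area, by \eqref{a2.2} and upper semicontinuity. Every function in $\ol S[\phi,\Om_{k_m}]$ lies above $\phi_{*m}$, so the $\beta_m$-term can only favor the competitor. The argument is then independent of the size of $\beta_m$, and this is exactly what decouples your two competing requirements.

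For the inequality $u\ge\phi_{*m}$ to hold exactly, the approximants must lie exactly in $\ol S[\phi,\Om_k]$. That is why the paper proceeds in stages:
\begin{itemize}
\item It first lets the penalty width $4^{-j}\to0$, obtaining $u_{m,k}$ as a maximizer of $J_{m,k}$ via the proof of Theorem 6.1 of [TW2].
\item It then lets $k\to\infty$, obtaining the unique maximizer $u_m$ of $J_m$.
\item Only at the end does it extract a smooth diagonal sequence $u_{m,k_m,j_m}$.
\end{itemize}

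Two smaller errors in your setup. The penalty must act on $B-\Om_m$, not $B-\Om$, for Lemma \ref{L4.1} to apply. Also, $w=1$ is imposed on $\p B$, not on $\pom$.
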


\noo{\it Proof}. We divide the proof into three steps. 
The first step is essentially contained in \cite{[TW2]}.

\vskip5pt

1. Let $B=B_R(0)$ as above and extend $\phi$ to $B$ by \eqref{a4.3}, 
such that $\phi$ is convex in $B$, $\phi\in C^{0,1}(\ol B)$, and 
$\phi=$const on $\p B$. 
Replacing $\phi$ by 
$\phi + \big(|x|-R+\text{$\frac 12$}\big)_+^2$,
where $(|x|-R+\frac 12)_+=\max(|x|-R+\frac 12, 0)$,
such that $\phi$ is uniformly convex in $\{x\in\R^n\ |\ R-\frac 12<|x|<R\}$.

For $j, k, m=1, 2, \cdots$, we denote
\beqs
 && H_j (t) = H(4^jt), \\ 
 && f_{m, k, j}=\left\{ \begin{array}{ll}
 f+\beta_m\chi_{D_m}\ \ & \text{in}\ \Om,\\
 0  &\text{in}\ \Om_k-\Om,\\
 H_j'(u-\phi) &\text{in}\ \ B-\Om_k,
 \end{array} \right.                               
 \eeqs
where $\Om_k=\{x\in\R^n\ |\ \text{dist}(x, \Om)<2^{-k}\}$ is the
$2^{-k}$-neighborhood of $\Om$. 

By Lemma \ref{L4.1} (with the domain $\Om$ in Lemma \ref{L4.1} replaced by $\Om_k$), 
there is a solution $u_{m,k,j}\in W^{4,p}(B)$ to 
\beqn
L[u] & =& f_{m,k,j} (\cdot, u)\ \ \ \text{in}\ \ B, \label{a4.16} \\
  u  & =&  \phi\ \ \ \ \text{on}\ \ \p B,\notag\\
  w  & =& 1 \ \ \ \text{on}\ \ \p B .\notag  
  \eeqn
By our definition of the penalty function $H_j$,  $u_{m,k,j}$ satisfies
\beq  \label{a4.17}  
|u_{m,k,j}-\phi|\le 4^{-j}\ \ \text{in}\ \ B-\Om_k. 
\eeq
Hence  $u_{m,k,j}$ converges as $j\to\infty$ to a
convex function $u_{m,k}$, which satisfies
$u_{m,k}=\phi$ in $B-\Om_k$ and $u_{m,k}$ is a solution of
\eqref{a4.13} in $\Om$.

\vskip5pt

Obviously $u_{m,k}\in \ol S[\phi, \Om_k]$.  By the proof of
Theorem 6.1 in \cite{[TW2]} 
 (here $u_{m,k}$ corresponds to $\overline u$ in the proof of
\cite[Theorem 6.1] {[TW2]}),
 $u_{m, k}$ is a maximizer of the functional
\beq  \label{a4.18}  
J_{m,k}(u)=\int_{\Om_k}\big [\det D^2 u \big]^{\frac{1}{n+2}}
          -\frac {1}{n+2}\int_{\Om_k} f_{m, k} u 
\eeq
in $\ol S[\phi, \Om_k]$, where $f_{m, k}$ is the restriction
of $f_{m,k,j}$ in $\Om_k$, which is independent of $j$.

\vskip5pt

As any function in $\ol S[\phi, \Om_k]$ is uniformly
Lipschitz, hence when $k\to\infty$, 

\vskip5pt
\centerline { $u_{m,k}\to u_m\in\ol S[\phi, \Om]$}

\noo after passing to a subsequence.
A similar proof implies that $u_m$ is the maximizer of the functional
\beq  \label{a4.19}  
J_{m}(u)=\int_{\Om} [\det D^2 u]^{\frac{1}{n+2}}
          -\frac {1}{n+2}\int_{\Om} f_{m} u 
\eeq
in the set $\ol S[\phi, \Om]$, where $f_{m}$ is the restriction
of $f_{m,k}$ in $\Om$, which is independent of $k$.

Note that by the concavity of the functional $J_m$, the maximizer of
$J_m$ in $\ol S[\phi, \Om]$ is unique. Hence as $k\to\infty$,
$u_{m,k}$ converges to $u_m$ (not just by a subsequence).

\vskip5pt

2. As functions in $\ol S[\phi, \Om]$ are uniformly
Lipschitz, so $u_m$ sub-converges as $m\to\infty$  to a convex
function $\ou\in C^{0,1}(\bom)$.
By \eqref{phib}, we have $\ou\in\ol S[\phi, \Om]$. 
We claim that $\ou$ is the maximizer of \eqref{a1.9}.

If the claim is not true, then
\beq  \label{a4.20}  
J(\ou)\le J(\tilde u)-4\eps_0
\eeq
for some $\eps_0>0$, where $\tilde u$ is the maximizer of \eqref{a1.9} and
$J$ is the functional in \eqref{a1.5}. 
Since $u_{m, k}\to u_m$ and $u_m\to \ou$,
we can choose a sequence $k_m\to\infty$ 
such that $u_{m, k_m}$ converges to $\ou$. 
Similarly to \eqref{a4.1} we introduce the function,
$$
\phi_{*m}(x)=\sup\{\L(x)\ |\ \L\ \text{is a supporting plane of $\phi$
                           at some point on $\pom_{k_m}$}\, \}. 
$$
Then 
$\phi_{*m}\in\ol S[\phi, \Om_{k_m}]$, $\phi_{*m}\le \phi$ on $\pom$,  and 
$\forall\ u\in\ol S[\phi, \Om_{k_m}]$, 
similarly to \eqref{a4.2} we have $u\ge \phi_{*m}$ in $\Om_{k_m}$.

Denote
$\mathring D_m=D_m\cup (\Om_{k_m}-\Om)$.
Let 
$$
\begin{array}{l}
\tilde u_m(x)=\sup\{a(x)\ |\ a\le \tilde u\ \text{in}\ \Om,\
 a\le \phi_{*m}\ \text{in}\ \mathring D_m, \ a\ \text{is affine}\},\\
 \ou_m (x)=\sup\{a(x)\ |\ a\le  \ou\ \text{in}\ \Om,\
 a\le \phi_{*m}\ \text{in}\ \mathring D_m, \ a\ \text{is affine}\}.
 \end{array} $$
Then $\tilde u_m, \ou_m\in \ol S[\phi, \Om_{k_m}]$ and 
\beq  \label{a4.21}  
\tilde u_m=\ou_m=\phi_{*m}\ \ \text{in}\ \ \mathring D_m. 
\eeq
By \eqref{a2.2} and the upper semicontinuity of the affine area functional,
we have
$$
\lim_{m\to\infty} \int_{\Om_{k_m}} [\det D^2 \ou_m]^{\frac{1}{n+2}}
 \le \int_\Om [\det D^2 \ou]^{\frac{1}{n+2}}. 
$$
Note that a convex function is twice differentiable a.e..
At any point where $D^2 \tilde u$ is positive definite, $u$ is strictly convex. 
Hence by our definition of $\tilde u_m$,
$\tilde u_m=\tilde u$ provided $m$ is sufficiently large. 
Hence $\det D^2 \tilde u_m\to\det D^2 \tilde u$ a.e.. 
By the upper semicontinuity of the affine area functional and the Fatou lemma, 
we therefore have
$$
\lim_{m\to\infty}  \int_{\Om_{k_m}} [\det D^2 \tilde u_m]^{\frac{1}{n+2}}
 =\int_\Om [\det D^2 \tilde u]^{\frac{1}{n+2}}. 
$$
It follows by \eqref{a4.20}
\beq  \label{a4.22}  
J_{m, k_m} (\ou_m)\le J_{m, k_m} (\tilde u_m)-3\eps_0 
\eeq
when $m$ is sufficiently large. Note that in \eqref{a4.22}, 
the integration involving $\beta_m$ is irrelevant, as
$\tilde u_m=\phi_{*m} = \ou_m$ in $D_m$.  

Next observe that when $m$ is sufficiently large, 
\beq  \label{a4.23}  
J_{m, k_m}(u_{m, k_m})\le J_{m, k_m}(\ou_m)+\eps_0 .
\eeq
To see this, recall that by our notation,
$$J_{m,k}(u_{m, k})=
\int_{\Om_k} [\det D^2 u_{m, k}]^{\frac{1}{n+2}}
          -\frac {1}{n+2}\int_{\Om_k} f u_{m, k}
          -\frac {1}{n+2}\int_{D_m} \beta_m u_{m, k}.$$
The first two terms on the right hand side give no trouble, 
but for the last term, we have by definition, 
$u_{m, k_m}\ge \ou_m$ in $D_{m}$.
Hence \eqref{a4.23} holds and we obtain from \eqref{a4.22} that 
\beq  \label{a4.24}  
J_{m, k_m}(u_{m, k_m})\le
J_{m, k_m} (\tilde u_m)-2\eps_0 .                               
\eeq
But this is a contradiction as $u_{m, k_m}$ is a maximizer of $J_{m, k_m}$ in 
$\ol S[\phi, \Om_{k_m}]$ and $\tilde u_m\in \ol S[\phi, \Om_{k_m}]$. 
Therefore $\ou$ is a maximizer of \eqref{a1.9}.

\vskip5pt

3. {\it Claim:}\  
$u_m$ satisfies \eqref{a4.15} when $\beta=\beta_m$ is sufficiently large.

 Indeed, by convexity it suffices to prove that
for any fixed $m$,
\beq  \label{a4.25}  
\lim_{\beta\to\infty} u_m(x)\le \phi_*(x)\ \ { \forall\ x\in D_m,}
\eeq
where $\phi_*$ is given in \eqref{a4.1}.
Hence in \eqref{a4.13}, we allow that $\beta_m>>2^m$.

For any given $m$, the function $u_m$  
is a maximizer to the concave
functional $J_m$ in \eqref{a4.19} in the set $\ol S[\phi, \Om]$. If \eqref{a4.25}
is not true, there is a point $x_0\in D_m$ such that $u_m(x_0)\ge
\phi_*(x_0)+\eps_0$ for some constant $\eps_0>0$. 
By convexity, $u_m$
and $\phi_*$ are uniformly Lipschitz continuous. Hence $u_m(x)\ge
\phi_*(x)+\frac 12\eps_0$ in $B_r(x_0)$, where $r= C\eps_0$ for some
small constant $C>0$. 
Let
$$u_{*m}(x)=\sup\{a(x)\ |\ a\le u_m\ \text{in}\ \Om,\
 a\le \phi_{*}\ \text{in}\ D_m,  \ a\ \text{is affine}\}.$$
Then $u_{*m}\in \ol S[\phi,\Om]$ and  $u_{*m}$ satisfies $u_{*m}=\phi_*$ in $D_m$ 
and $u_m\ge u_{*m}\ge \phi_*$ in $\Om$. 
When $\beta$ is sufficiently large, the
integration $-\int_{D_m}\beta (u_{*m}-u_m)$ becomes very large, which
dominates the (uniformly bounded) first two terms on the right hand side of the functional 
$$J_m(u_m)=\int_{\Om} [\det D^2 u_m]^{\frac{1}{n+2}}
          -\frac {1}{n+2}\int_{\Om} f u_m -\beta\int_{D_m} u_m.$$
Hence $u_m$ cannot be a maximizer of $J_{m}$ in $\ol S[\phi, B]$ and we reach a contradiction.

We have thus obtained the sequence $u_m$ which satisfies \eqref{a4.13} and \eqref{a4.15},
and $u_m$ is the limit of the smooth functions $u_{m, k, j}$ as $j\to\infty$ and $k\to\infty$.

Note that the proof above does not directly imply that the function
$u_m$ (obtained in Step 1) is smooth.
However, since $u_m$ is the limit of the smooth functions $u_{m, k, j}$ as $j\to\infty$ and $k\to\infty$,
we can select subsequences $k_m\to\infty$ and $j_m\to\infty$, such that
the function $u_{m, k_m, j_m}$ (still denoted as $u_m$) satisfies both requirements \eqref{a4.13} and \eqref{a4.15}.
This completes the proof.
\hfill $\square$

\vskip10pt

From the proof, one sees that the main ideas are the following:
\begin{itemize}
\item [(I1)] As $j\to\infty$, the penalty function $H_j$ in \eqref{a4.16} pushes the solution $u_{m,k,j}$ 
as close as possible to $\phi$ in $\Om^\delta-\Om$.

\item [(I2)] As $m\to\infty$, the second penalty $\beta_m\to\infty$ and it pushes the solution $u_{m, k}$ 
as close as possible to $\phi_*$ in $D_m$.

\item [(I3)] From (I1), (I2) and \eqref{phib}, we obtain \eqref{a4.15}.
\end{itemize}

The approximation \eqref{a4.14} was proved in \cite{[TW2]}, 
but \eqref{a4.15} is new.
We point out that \eqref{a4.15} is critical for this paper, 
as it enables us to apply the a priori estimate \eqref{a2.15} to $u_m^*$
(the Legendre transform of $u_m$).
See Remark 4.1 (ii) below for details.

Denote $U_m=N_{u_m}(\Om)$. Since $u_m\in \ol S[\phi, \Om]$, 
we have $U_m\subset \Om^*=:N_\phi(\Om)$.
\eqref{a4.15} tells that for any compact, proper subset $K\subset N_\phi(\Om)$, we have
$K\subset U_m$ if $m>>1$.
From Step 3 in the proof of Theorem \ref{Thm4.1}, we also have the following property

\begin{Corollary}\label{Coro4.1}
For any $x\in\p U_m$, 
\beq\label{a4.30}
|u^*_m(x)-\phi^*(x)|\to 0\ \ \text{as}\ m\to\infty ,
\eeq
where $u^*_m$ and $\phi^*$ are the Legendre transforms of $u_m$ and $\phi$, respectively.
\end{Corollary}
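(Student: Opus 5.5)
Throughout, $U_m=N_{u_m}(\Om)$ and $u_m^*$ is computed as in \S2.3: $u_m$ is extended by $\phi$ to $\Om^\delta-\Om$, so $u_m^*$ is defined on $N_\phi(\Om^\delta)$. The argument rests on Step 3 of the proof of Theorem \ref{Thm4.1}, namely \eqref{a4.25}: for $\beta_m$ large, $u_m\le\phi_*+\eps_m$ in $D_m$ with $\eps_m\to0$. Together with \eqref{a4.2}, $u_m\ge\phi_*$, this gives $0\le u_m-\phi_*\le\eps_m$ on $D_m$, the $2^{-m}$-neighbourhood of $\pom$ in $\Om$.

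Step 1 is to identify the boundary points of $U_m$. Let $y\in\p U_m$. Since $U_m\subset\Om^*$ and $u_m$ is convex and uniformly Lipschitz, $y$ is a limit of slopes $y_k\in N_{u_m}(x_k)$ with $x_k\in\Om$. Passing to a subsequence, $x_k\to\bar x\in\bom$, and the supporting plane $\ell_y(x)=y\cdot x-u_m^*(y)$ touches the graph of $u_m$ (extended by $\phi$) at $\bar x$. If $\bar x$ were interior and far from $\pom$, then, because $u_m$ is smooth and uniformly convex there (it is one of the $u_{m,k_m,j_m}$), $y$ would be an interior point of $U_m$. So the touching point can be taken in $\ol{D_m}$. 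I would handle this via the standard fact that for uniformly convex $C^1$ functions, $N_{u_m}(\Om)=Du_m(\Om)$ is open and its boundary comes from $\pom$.

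Step 2 compares the dual functions at such a $y$. By \eqref{a4.2} and property (i) of the Legendre transform, $u_m\ge\phi_*$ gives $u_m^*\le\phi_*^*$. Moreover, on $\Om^*$ we have $\phi_*^*=\phi^*$, because the supporting planes of $\phi_*$ with slopes in $N_\phi(\Om)$ are exactly the supporting planes of $\phi$ at boundary points. This gives $u_m^*(y)\le\phi^*(y)$. For the reverse inequality, $u_m^*(y)=y\cdot\bar x-u_m(\bar x)$ with $\bar x\in\ol{D_m}$. Then
\[
u_m^*(y)\ \ge\ y\cdot\bar x-\phi_*(\bar x)-\eps_m .
\]
Let $x_b\in\pom$ satisfy $|x_b-\bar x|\le2^{-m}$. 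Using the uniform Lipschitz bounds on $\phi_*$ and the fact that $|y|$ is bounded, I would replace $\bar x$ by $x_b$ at a cost of $C2^{-m}$. Since $\phi_*=\phi$ on $\pom$, this gives $u_m^*(y)\ge y\cdot x_b-\phi(x_b)-\eps_m-C2^{-m}$.

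Step 3, which I expect to be the main obstacle, is to show that $y\cdot x_b-\phi(x_b)$ is close to $\phi^*(y)$. This requires that $y$ be nearly a supporting slope of $\phi$ at $x_b$. I would argue by contradiction and compactness. Suppose the discrepancy stays above $\eps>0$ along a subsequence. Then $y\to y_0$ and $x_b\to x_0\in\pom$. The planes $\ell_y$ lie below $u_m$, and $u_m\to u\ge\phi_*$ uniformly; they also lie below $\phi$ outside $\Om$, because of the extension. Hence the limit plane $\ell_{y_0}$ lies below $\phi$ on $\Om^\delta-\Om$ and below $u$ on $\Om$, and touches $\phi$ at $x_0\in\pom$. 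Every $y\in\p U_m$ lies in $\ol{N_\phi(\Om)}$, so $y_0$ is a supporting slope of $\phi$ at $x_0$, possibly at a point of $\pom$ where $\phi$ is not differentiable. It follows that $\ell_{y_0}=\phi^*$-tangent there, i.e.\ $y_0\cdot x_0-\phi(x_0)=\phi^*(y_0)$, and continuity of $\phi^*$ contradicts the assumed discrepancy. Here I would invoke \eqref{phib} to ensure that $\ell_{y_0}$ stays below $\phi$ across $\pom$. Combining Steps 2 and 3 yields $|u_m^*-\phi^*|\to0$ on $\p U_m$, uniformly in $y$, which is \eqref{a4.30}.
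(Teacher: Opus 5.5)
\textbf{There is a genuine gap, in the lower bound.} Your Step 2 lower bound is automatic. For any $x_b\in\partial\Omega$, the definition of the Legendre transform already gives $u_m^*(y)\ge y\cdot x_b-u_m(x_b)=y\cdot x_b-\phi(x_b)$. So \eqref{a4.25} has done no work, and everything rests on Step 3.

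In Step 3 the inference ``$y_0\in\overline{N_\phi(\Omega)}$, $\ell_{y_0}\le\phi$ off $\Omega$, and $\ell_{y_0}$ touches at $x_0\in\partial\Omega$, hence $y_0\in N_\phi(x_0)$'' is false. Take $\Omega=B_1$ and $\phi=\frac12|x|^2$, so that $N_\phi(\Omega)=B_1$ and $\phi_*=|x|-\frac12$. Let $x_0=e_1$, $y_0=(1-t)e_1$ with $0<t\le 1$, and $\ell(x)=\frac12+(1-t)(x_1-1)$. Then:
\begin{enumerate}
\item $\ell\le\phi$ on $\{|x|\ge 1\}$ and $\ell(e_1)=\phi(e_1)$;
\item $\ell\le\max(\phi_*,\ell)$, and $\max(\phi_*,\ell)\in\overline S[\phi,\Omega]$;
\item $y_0\in N_\phi(\Omega)$.
\end{enumerate}
Yet $\phi^*(y_0)-(y_0\cdot x_0-\phi(x_0))=t^2/2>0$. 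Membership in $\overline S[\phi,\Omega]$ does not force the boundary slopes of $u$ to match those of $\phi$. Passing to the limit $u$ discards exactly the information the penalization provides. Invoking \eqref{phib} does not help either, since it only concerns $\bigcap_\delta N_\phi(\Omega^\delta)$.

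\textbf{The missing idea.} The paper points to it with ``compare with the width of the strip $D_m$''. Choose $\beta_m$ so that $\epsilon_m=o(2^{-m})$. Convexity of $u_m$ along the inner normal, over a distance $2^{-m}$, then gives at a boundary touching point $\bar x$
\[
y\cdot\nu\ \ge\ \partial_\nu\phi(\bar x)-O(2^{-m})-2^m\epsilon_m .
\]
Hence $y$ lies within $o(1)$ of a supporting slope of $\phi$ at $\bar x$, which is the lower bound.

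Alternatively, use \eqref{a4.15}, which you never invoke. It puts $y_0$ on $\partial N_\phi(\Omega)$, so $y_0\in N_\phi(z)$ for some $z\in\partial\Omega$. Then $\ell_{y_0}$ minus the supporting plane of $\phi$ at $z$ is a constant that is $\ge 0$ at $x_0$ and $\le 0$ at $z$, and your Step 3 closes. In fact this argument shows that every $v\in\overline S[\phi,\Omega]$ extended by $\phi$ has $v^*=\phi^*$ on $N_\phi(\partial\Omega)$, because $v\ge\phi_*\ge$ that plane. Combined with the uniform Lipschitz bound on Legendre transforms and \eqref{a4.15}, this yields \eqref{a4.30} directly.

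\textbf{The upper bound also rests on a false identity.} Since $\phi_*\le\phi$, one only has $\phi_*^*\ge\phi^*$, with equality on $N_\phi(\partial\Omega)$. For example, with $\Omega=(-1,1)$ and $\phi=x^2$ one gets $\phi_*=2|x|-1$ and $\phi_*^*(0)=1>0=\phi^*(0)$. So $u_m^*\le\phi_*^*$ gives $u_m^*\le\phi^*+o(1)$ only near $\partial\Omega^*$. That again needs \eqref{a4.15}, or else the observation that the touching point lies on $\partial\Omega$, where $u_m=\phi$.
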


Indeed,  since $u_m\in \ol S[\phi, \Om]$, we have $u^*_m(x)\le \phi^*(x)+\eps$ near $\p U_m$,
with $\eps\to 0$ as $x$ approaches $\p U_m$.
On the other hand, we may choose $\beta_m$ in \eqref{a4.25} so large such that
$\max\{ (u_m-\phi) (x), 0\}$ is so small (compare with the width of the strip $D_m$),
and hence we have \eqref{a4.30}.


\vskip5pt

\noo{\bf Remarks 4.1}.
\newline
(i) The uniqueness of the maximizer $u$ of \eqref{a1.9} is crucial in the proof of Theorem \ref{Thm4.1}.
It ensures that the sequence $u_m$ converges to the unique maximizer.
When $\th\in (0, \frac 1n]$, the uniqueness follows from the
concavity of the functional $J$. 
\\ [5pt]
(ii) Let $u^*$ be the Legendre transform of $u$.
\eqref{a4.15} and \eqref{a4.30} enable us to apply the a priori estimates to $u^*$,
which is precisely what is needed in this paper. 
To see this,  consider a sequence of smooth solutions $u_m\to u$
to the affine mean curvature equation. 
Let $u_m^*$ and $\phi^*$ be the Legendre transform of $u_m$ and
$\phi$, respectively. By   \eqref{a2.9}, $u_m^*$ is a smooth
function in $\Om_\delta^*$, satisfying equation
\beq  \label{a4.26}  
L^*[u^*]= - {\Small\text{$\frac{1}{n+2}$}} \big[f(Du^*)+\beta_m\chi_{D_m}\big]\,\det D^2 u^*
 \ \ \ \ \text{in}\ \ \Om^*_\delta, 
 \eeq
with $\delta\to 0$ as $m\to\infty$, where
$\Om^*_\delta=\{x\in\Om^*\ |\ \text{dist}(x, \pom^*)>\delta\}$. 
By \eqref{a4.15} and \eqref{a4.30}, 
we can apply the a priori estimate \eqref{a2.15} to $u^*_m$ in an appropriate sub-level set, 
and the upper bound $C$ in \eqref{a2.15} is independent of $\beta_m$.  
Passing to the limit then yields the a priori estimate \eqref{a2.15} for $u^*$.

\vskip20pt
 
 \section{Regularity for the first boundary value problem}
 
In this section we prove Theorem \ref{Thm2.3}.
For convenience we restate the theorem here.

\begin{Theorem}\label{Thm5.1} 
Let $\Om$ be a smooth, bounded domain in $\R^n$
and $\phi$ a smooth, uniformly convex function defined in $\bom$.
Then if $f\in L^\infty(\Om)$, 
the maximizer of \eqref{a1.9} is uniformly convex and lies in the Sobolev space $W^{4,p}_{loc}(\Om)$
for any $p>1$; if $f\in C^\alpha(\Om)$ for some $\alpha\in
(0, 1)$, the maximizer is  $C^{4,\alpha}$ smooth
in $\Om$.
\end{Theorem}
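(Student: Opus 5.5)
By the discussion in \S2.2, the proof reduces to showing that the maximizer $u$ of \eqref{a1.9} is strictly convex in $\Om$. The approximation of Theorem \ref{Thm4.1} gives smooth, uniformly convex $u_m\to u$ solving \eqref{a4.13}. On any $\Om'\subset\subset\Om$ we have $D_m\cap\Om'=\emptyset$ for $m$ large, so there $u_m$ solves $L[u_m]=f$. Once $u$ has a positive modulus of convexity, so do the $u_m$ locally, uniformly in $m$. Proposition \ref{P2.1} then gives uniform $W^{4,p}_{loc}$ (resp. $C^{4,\alpha}_{loc}$) bounds, and these pass to the limit $u$. It therefore remains to prove strict convexity.

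I would argue by contradiction. Suppose $u$ is not strictly convex at some $x_0\in\Om$. Then there is a supporting affine function $\ell$ at $x_0$ whose contact set $T=\{u=\ell\}$ contains more than one point; after subtracting $\ell$ we may take $u\ge 0$, $u=0$ on $T$. The first step is to show that $T$ cannot reach $\pom$. Since $u\ge\phi_*$ by \eqref{a4.2} and $\phi$ is smooth and uniformly convex, any supporting plane touching $\phi_*$ along a segment ending at a boundary point would force $\phi$ itself to be affine along a direction, which contradicts uniform convexity. Hence every extreme point of $T$ lies in the interior of $\Om$.

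Next, take an extreme point $q$ of $T$, which is interior, and normalize coordinates so that $q=0$ and $T$ contains a segment in the $x_n$ direction emanating from $0$. I would then pass to the Legendre transform. By \eqref{a4.15} and Corollary \ref{Coro4.1}, Remark 4.1(ii) applies, so the estimate \eqref{a2.15} in Case (ii) of Lemma \ref{L2.3} holds for $u^*$ on sub-level sets. This bounds $\det D^2u^*$ from above, independently of $\beta_m$, and hence bounds $\det D^2 u$ from below on the relevant sections. Applying Lemma \ref{L2.3} in Case (i) directly to $u_m$ gives an upper bound for $\det D^2 u$ on sections of $u$ compactly inside $\Om$. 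If near $q$ we obtain sections with both bounds, Lemma \ref{L2.2} yields strict convexity there, and this contradicts $q$ lying on a nontrivial segment of $T$.

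The main obstacle is to localize these two-sided Monge--Amp\`ere bounds near an extreme point of $T$. Upper bounds on $\det D^2u$ require sections $\{u<\ell+h\}$ compactly contained in $\Om$, while the dual lower bound requires sections of $u^*$ compactly contained in $N_\phi(\Om)$. Near a contact set both can degenerate: compare Example 2 and the configuration in Remark 2.2, where $u\ge C\max(|x'|^2,|x_n|)$. My first attempt would be a blow-up at $q$. Rescale the sections $\{u<h\}$ affinely to normalized convex sets, as in the proof of Theorem \ref{Thm3.2}, and apply Lemma \ref{L2.4} to the rescaled graphs to obtain a uniform Gauss curvature bound near the bottom. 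In the limit, $u$ would then be either bounded below by a profile of type (a)--(b) in Remark 2.2, which is impossible, or strictly convex. The delicate case is when the sections are very elongated along $T$. Here the dual estimate of Remark 4.1(ii) is needed, because the dual sections stay inside $N_\phi(\Om)$ thanks to the uniform convexity of $\phi$ near $\pom$, and this prevents $N_u$ from collapsing.
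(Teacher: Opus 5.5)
Your reduction to strict convexity is the paper's. So is the mechanism you use for an interior contact set: a $\beta_m$-independent upper bound for $\det D^2u^*$ from Lemma \ref{L2.3}, Case (ii), via \eqref{a4.15} and Remark 4.1(ii), an upper bound for $\det D^2u$ from Case (i), then Lemma \ref{L2.2}.

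The genuine gap is your first step, the claim that the contact set $T$ cannot reach $\pom$. Your argument never uses that $u$ is a maximizer, and the claim fails without that. Take $\Om=B_1(0)$ and $\phi=|x|^2$. Both the constant $1$ and $\phi_*(x)=2|x|-1$ belong to $\ol S[\phi,\Om]$ and satisfy $u\ge\phi_*$. The graph of $1$ is flat. The plane $2z\cdot x-1$ ($|z|=1$) touches $\phi_*$ along the whole segment $\{tz:0\le t\le 1\}$, which ends at $z\in\pom$. Yet $\phi$ is uniformly convex.

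Excluding such segments for the maximizer is the heart of the proof, and it occupies Lemmas \ref{L5.1} and \ref{L5.2}.
\begin{itemize}
\item Both endpoints on $\pom$ (Lemma \ref{L5.1}). Transversality and smoothness of $\phi$ give $u\le C|x'|^2$ on $\pom$, hence in $\Om$. Dually $u^*\ge c|y'|^2$ and $u^*(0,y_n)\ge|y_n|$, with $0$ interior to $N_\phi(\Om)$ by the uniform convexity of $\phi$. This is the configuration of Remark 2.2, which contradicts the $\beta_m$-independent bound on $\det D^2u_m^*$.
\item One endpoint on $\pom$ (Lemma \ref{L5.2}). The same dual argument first places $0$ on $\pom^*$. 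One then shows that $e_n$ is the inner normal of $\pom^*$ at $0$ and that $C_*|x'|^2\le u\le C^*|x'|^2$ near the segment. The blow-up $x'\mapsto x'/\sqrt\eps$, $u\mapsto u/\eps$ has an affine maximal limit containing a vertical ray. After a $45^\circ$ rotation it again has the forbidden profile of Remark 2.2.
\end{itemize}

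Conversely, the localization you call the main obstacle disappears once these two lemmas are available. By compactness there is $\sigma>0$ with $u-\ell_{x_0}\ge\sigma$ on $\pom$ for all $x_0\in\Om_\delta$, so Lemma \ref{L2.3} gives $\det D^2u\le C_\delta$ in $\Om_\delta$. Also $u^*\ge\sigma^*>0$ on $\pom^*$, so the dual sections $\{u^*<a\}$ with $a<\sigma^*$ are compactly contained in $\Om^*$, and there Case (ii) bounds $\det D^2u^*$. Lemma \ref{L2.2} then shows $u^*$ is $C^1$ near $0$, so $T=N_{u^*}(0)$ is a single point. No blow-up, no appeal to Lemma \ref{L2.4}, and no treatment of elongated sections is needed. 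In your proposal the interior part is only sketched, while the step that actually requires the new ideas is dismissed by an incorrect argument.
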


Let $u$ be a maximizer of \eqref{a1.9}. To prove the regularity of $u$, it
suffices to prove the strict convexity of $u$, as was pointed out in the
introduction. This section is devoted to the proof of the strict convexity.  

We first prove in \S5.1 that the graph of $u$, $\M_u$, 
contains no line segment with both endpoints on the boundary. 
In \S5.2 we rule out the possibility
that $\M_u$ contains a line segment with one endpoint on the
boundary. Finally in \S5.3 we prove that $\M_u$ contains no line
segment of which both endpoints are interior points of $\M_u$.

\vskip10pt

\subsection {Strict convexity I}\ 

\begin{Lemma} \label{L5.1} 
Suppose that $\Om$ is a smooth and bounded domain in $\R^n$, 
$\phi$ is a smooth and uniformly convex function in $\bom$. 
Let $u$ be a maximizer of \eqref{a1.9}. 
Then $\M_u$ contains no line segment with both endpoints on $\p\M_u$.
\end{Lemma}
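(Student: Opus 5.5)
We argue by contradiction and use the uniform convexity of $\phi$ on the boundary. Suppose $\M_u$ contains a segment $\ell$ with both endpoints $p_1,p_2\in\p\M_u$, lying over $\bar x_1,\bar x_2\in\pom$. Subtract the affine function $L$ that supports $u$ along $\ell$, so that $u-L\ge 0$ in $\Om$ and $u-L=0$ on the chord $[\bar x_1,\bar x_2]$. Since $u=\phi$ on $\pom$ and $\phi$ is uniformly convex, $\phi-L$ is uniformly convex on $\bom$ and vanishes at $\bar x_1$ and $\bar x_2$.

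The contact set $T=\{u=L\}$ is a convex set containing the chord. Because $\phi-L>0$ on $\pom$ away from $T\cap\pom$, the set $T\cap\pom$ consists only of points where $\phi=L$. The key observation is that the whole affine line through $\bar x_1,\bar x_2$ is then a line on which $\phi - L$ is a uniformly convex function with two zeros. Hence $\phi-L<0$ strictly between $\bar x_1$ and $\bar x_2$ along that chord. This already requires the chord to leave $\bom$, since on $\bom$ we have $\phi\ge$ its own supporting planes. More precisely, $u\in\ol S[\phi,\Om]$ forces $u\ge\phi_*$ by \eqref{a4.2}.

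The strategy is therefore to compare $L$ with $\phi_*$. Since $L$ is a supporting plane of $u$, its slope $DL$ lies in $\overline{N_u(\Om)}\subset\overline{N_\phi(\Om)}$. I plan to show that a plane with slope in $\overline{N_\phi(\Om)}$ which touches $\phi$ at two distinct boundary points $\bar x_1\ne\bar x_2$ contradicts the uniform convexity of $\phi$. Write $DL=D\phi(\bar y)$ for some $\bar y\in\bom$, by taking a limit. Then $\phi-L$ has its minimum over $\bom$ at $\bar y$, with value $\phi(\bar y)-L(\bar y)$. Uniform convexity of $\phi$ forces $\phi-L$ to be strictly convex with a unique critical point, and this point is its global minimum on the convex hull.

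If $\Om$ is convex, $\phi-L$ vanishing at $\bar x_1$ and $\bar x_2$ gives $\phi-L<0$ on the open chord, while $u-L=0\ge$ on that chord. On the other hand, $u\ge\phi_*\ge\L_{\bar x_i}$ with $\L_{\bar x_i}$ the tangent plane of $\phi$ at $\bar x_i$. The directional derivative of $u$ along the chord at $\bar x_1$ is then at least that of $\phi$, which is strictly larger than the slope of $L$ by uniform convexity. This contradicts $u=L$ on the chord.

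For general smooth $\Om$, the chord lies in $\bom$ by convexity of the contact set, since the segment is in the graph over $\bom$. The same comparison applies at the endpoint $\bar x_1$. Because $u$ is convex and $u=L$ along the chord from $\bar x_1$, we get $u(\bar x_1+t e)=L(\bar x_1+te)$ for small $t$. But $u\ge \L_{\bar x_1}$, where $\L_{\bar x_1}$ is the tangent plane of $\phi$ at $\bar x_1$ (see \eqref{a4.1}), so $L\ge\L_{\bar x_1}$ near $\bar x_1$ in direction $e$, and likewise at $\bar x_2$ in direction $-e$. Since $L(\bar x_i)=\L_{\bar x_i}(\bar x_i)=\phi(\bar x_i)$, we obtain $\partial_eL\ge\partial_e\phi(\bar x_1)$ and $\partial_eL\le\partial_e\phi(\bar x_2)$. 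Uniform convexity of $\phi$ along the segment gives $\partial_e\phi(\bar x_2)-\partial_e\phi(\bar x_1)\ge c|\bar x_2-\bar x_1|>0$, so these two inequalities are consistent, and the contradiction must come from elsewhere: $L(\bar x_2)-L(\bar x_1)=\partial_eL\,|\bar x_2-\bar x_1|$ should equal $\phi(\bar x_2)-\phi(\bar x_1)=\int\partial_e\phi$. This is compatible, so the pure first-order argument is insufficient.

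The main obstacle is therefore to exclude a flat segment whose endpoints lie on the boundary, where the boundary data alone allow it. Here I would use the a priori estimates as in Remark 2.2. Near $\bar x_1$, $u-L$ vanishes along the segment and, by the uniform convexity of $\phi$ on $\pom$ and $u\ge\phi_*$, grows at least quadratically transversally. Pass to the Legendre transform $u^*$ via Theorem \ref{Thm4.1}, \eqref{a4.15} and Corollary \ref{Coro4.1}, which permit applying \eqref{a2.15} (Case (ii)) to $u_m^*$ near the slope $DL$, giving $\det D^2u^*\le C$ there. The segment in $\M_u$ corresponds to $u^*$ having a conical, non-differentiable point at $DL$, so $\mu[u^*]$ cannot be bounded: the normal image of a small neighborhood of $DL$ contains the segment's projection thickened. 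This contradicts the bound. Verifying that this cone singularity forces unbounded Monge--Amp\`ere density, using the uniform convexity of $\phi$ to control the transversal growth, is the step I expect to need the most care.
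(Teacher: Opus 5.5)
Your closing strategy (Legendre transform, the approximation of Theorem \ref{Thm4.1} with \eqref{a4.15}, Lemma \ref{L2.3} in Case (ii) for $u_m^*$, then Remark 2.2) is the paper's route. However, the step that makes it work is missing, and the claim you put in its place is false.

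A conical point of $u^*$ at $DL$ whose subdifferential contains a segment does not by itself make $\mu[u^*]$ unbounded. Pogorelov's function $(1+x_n^2)|x'|^{2-2/n}$ from Example 1 vanishes on a segment and has Monge--Amp\`ere density $\ge 1$ near the origin. So its Legendre transform has exactly such a conical point at $0$, yet its Monge--Amp\`ere density there is $\le 1$. Whether the ``thickened segment'' in $N_{u^*}(B_\rho(DL))$ has measure $\gg\rho^n$ depends on $u-L$ growing \emph{at most} quadratically off the segment; Pogorelov's function grows like $|x'|^{2-2/n}$, which is too fast. Remark 2.2 needs $u^*\ge c\max(|y'|^2,|y_n|)$. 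By property (i) after \eqref{a2.3}, the $|y'|^2$ part comes from an \emph{upper} bound $u-L\le C|x'|^2$. You instead propose a lower bound $u-L\ge c|x'|^2$ from uniform convexity and $u\ge\phi_*$. That only bounds $u^*$ from above, so it cannot give the needed growth of $u^*$.

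The missing idea is \eqref{a5.3}--\eqref{a5.4}. Normalize $L=0$ and $\ell=\{x'=0,\ |x_n|\le 1\}$. On $\pom$ you have $\phi=u\ge 0$ with zeros at $\pm e_n$, so these are minima of $\phi|_{\pom}$ and its tangential gradient vanishes there. The segment is transversal to $\pom$ at $\pm e_n$, since strict convexity of $\phi$ along the chord gives $\partial_{e_n}\phi(e_n)>0>\partial_{e_n}\phi(-e_n)$. The \emph{smoothness} of $\phi$ then gives $\phi\le C|x'|^2$ on $\pom$, and convexity of $u$ propagates this to $u\le C|x'|^2$ in $\Om$. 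Hence $u^*\ge\frac{1}{2C}|y'|^2$, as in \eqref{a5.6}. Combined with $u^*(y)\ge\sup_{x\in\ell}x\cdot y=|y_n|$, this gives condition (b) of Remark 2.2. It also makes the sections $\{u^*<h\}$ compactly contained near $0$, which is what lets you apply Lemma \ref{L2.3} to $u_m^*$ with a bound independent of $\beta_m$.

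Uniform convexity of $\phi$ is what shows $0\notin D\phi(\pom)$, so that $DL=0$ is an interior point of $N_\phi(\Om)$ and \eqref{a4.15} applies. You had the ingredients: the unique critical point of $\phi-L$ has negative value, so it cannot lie on $\pom$, where $\phi-L\ge 0$. But you never drew this conclusion.

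Finally, drop the first-order paragraphs. $\phi-L<0$ on the open chord is no contradiction, since $u\ne\phi$ inside $\Om$. In the convex-$\Om$ paragraph the inequality is reversed ($\partial_e\phi(\bar x_1)<\partial_e L$), as you later notice yourself.
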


\noo{\it Proof}. Suppose $\ell$ is a line segment in the graph
$\M_u$ with both endpoints on boundary $\p\M_u$. By subtracting a
linear function we may assume that
\beq \label{a5.1}
u\ge 0
\eeq
and the line segment $\ell$ lies in the plane $\L:=\{x_{n+1}=0\}$.
We may furthermore assume that
\beq  \label{a5.2}
\ell=\{x\in\L\ |\   x'=0, -1\le x_n\le 1\},
\eeq
with the endpoints $\pm e_n\in\pom$, where $x'=(x_1,\cdots,
x_{n-1})$ and $e_n=(0,\cdots, 0, 1)\in\R^n$ is the unit vector in the
$x_n$-axis.

For sufficiently small $\delta >0$, we now extend $\phi$ 
to the neighborhood $\Om^\delta =\{x\in\R^n\ |\ \text{dist}(x, \Om)<\delta\}$,
such that $\phi$ is smooth and uniformly convex.
Since $\phi=u=0$ at $\pm e_n$, the
line segment $\ell$ must be transversal to $\M_\phi$ at $\pm e_n$,
namely the inner product $\lan e_n, \gamma \ran\ne 0$, where
$\gamma$ denotes the normal of $\M_\phi$ either at $e_n$ or at $-e_n$. Hence by
$\phi=u\ge 0$ on $\pom$ and the smoothness of $\phi$, there exists a
constant $C$ such that
\beq   \label{a5.3}
u(x)=\phi(x)\le {\Small\text{$\frac 12$}} C |x'|^2
                          \ \ \forall\  x\in \pom. 
\eeq
By the convexity of $u$, it then follows that
\beq   \label{a5.4}
u(x)\le {\Small\text{$\frac 12$}} C |x'|^2 \ \forall\ x\in\Om. 
\eeq

Since both endpoints $\pm e_n$ are boundary points of $\M_\phi$, from
the uniform convexity of $\phi$, we have $0\not\in D\phi(\pom)$. Hence 
$0\in \Om^*:=D\phi(\Om)$.

Let $u^*$ be the Legendre transform of $u$, defined in \eqref{a2.3}, 
which is a convex function defined in $\Om^*$. 
Since $u(0)=0$ and $u\ge 0$, we have
$u^*(0)=0$. Since the line segment $\ell$, given by \eqref{a5.2}, lies in
the graph $\M_u$, one easily verifies that, (see (i)-(iii) after
\eqref{a2.3}),
\beq   \label{a5.5}
u^*(0, y_n)\ge |y_n|.
\eeq
From \eqref{a5.4}, we also have
\beq   \label{a5.6}
u^*(y)\ge {\Small\text{$\frac{1}{2C}$}} |y'|^2.
\eeq
Therefore $\det D^2 u^*$ is not bounded from above near the origin 
(see Remark 2.2).

But on the other hand, by the a priori estimates (Lemma \ref{L2.3}), 
$\det D^2 u^*$ must be bounded. Indeed, by Theorem \ref{Thm4.1},
there exists a smooth approximating sequence $u_m\to u$. 
By equation \eqref{a2.9} or \eqref{a4.26}, $u^*_m$ satisfies the
equation
\beq   \label{a5.7}
U^{*ij} w^*_{ij}
          = - {\Small\text{$\frac {1}{n+2}$}} f_m(Du^*)\,\det D^2 u^* 
\eeq
with $f_m=f+\beta_m\chi_{D_m}$. The assertions \eqref{a4.14} and \eqref{a4.15}
imply that $u^*_m$ is defined in $\Om^*_{\delta_m}$ with
$\delta_m\to 0$ as $m\to\infty$, where $\Om^*_\delta=\{x\in\Om^*\ |\
\text{dist}(x, \pom^*)>\delta\}$.

By the growth estimates \eqref{a5.5} and \eqref{a5.6},  the set $\{u^*<h\}$ is
strictly contained in $\Om^*$ provided $h>0$ is a small constant.
Hence we can apply Lemma \ref{L2.3} (in the case $\th=\frac {n+1}{n+2}$) to
get the estimate
$$\det D^2 u^*_m\le C_1,$$
where the constant $C_1$ is independent of
$\beta_m$. Sending $m\to\infty$, by the weak convergence of the
Monge-Amp\`ere operator, we obtain $\det D^2 u^*\le C_1$ near the
origin. This is in contradiction with the assertion that $\det D^2
u^*$ is not bounded from above near the origin. $\square$

\vskip10pt

\subsection{Strict convexity I\!I}

In this section we rule out the possibility that $\M_u$ contains a
line segment with one endpoint on the boundary.

\begin{Lemma}\label{L5.2} 
Suppose $\Om$ is a smooth and bounded domain
in $\R^n$, and $\phi$ is a smooth, uniformly convex function in $\bom$. Let $u$
be a maximizer of \eqref{a1.9}. Then $\M_u$ contains no line segment with
an endpoint on $\p\M_u$.
\end{Lemma}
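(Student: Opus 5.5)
We argue by contradiction, following the scheme of Lemma \ref{L5.1}: we produce growth estimates for the Legendre transform $u^*$ which force $\det D^2u^*$ to be unbounded near some point of $\Om^*$. This contradicts the uniform bound from Lemma \ref{L2.3} (case (ii)), applied to the approximations $u_m^*$ of Theorem \ref{Thm4.1}; by \eqref{a4.15} and Corollary \ref{Coro4.1} those estimates are independent of $\beta_m$.

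\emph{Normalization.} Suppose $\M_u$ contains a segment with exactly one endpoint $p_0$ on $\p\M_u$; by Lemma \ref{L5.1} no segment in $\M_u$ has both endpoints on the boundary. Extend the segment maximally inside the contact set. Subtracting a supporting affine function and rotating, we arrange:
\begin{itemize}
\item $u\ge 0$;
\item the segment is $\ell=\{x'=0,\ 0\le x_n\le 1\}$ with $e_n\in\pom$, so $u(e_n)=\phi(e_n)=0$;
\item the other endpoint is the origin, an interior point of $\Om$, which is an extreme point of the contact set $\{u=0\}$.
\end{itemize}
Since $\phi$ is smooth and uniformly convex and $\phi\ge 0$ on $\pom$ with $\phi(e_n)=0$, transversality at $e_n$ gives $\phi(x)\le C|x'|^2$ on $\pom$ near $e_n$, exactly as in \eqref{a5.3}.

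\emph{Dual growth.} On the Legendre side, the segment gives $u^*(0,y_n)\ge \max(y_n,0)$ by property (ii) after \eqref{a2.3}. The missing ingredient, compared with Lemma \ref{L5.1}, is an upper bound of the form $u\le C|x'|^2+o(1)$ along a tube around $\ell$, which would give $u^*(y)\ge c|y'|^2$ in the region $y_n\ge -\eps$. Near $e_n$ this follows from the boundary bound and convexity.

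To control the interior endpoint, I would pick a direction $y^0$ with $y^0_n<0$ small, tilt by the affine function $\epsilon_0 x_n$, and consider the section $S=\{u^*<h+ y\cdot e\}$ centred near $0$. The aim is to show that $u^*$ is flat in the $y_n<0$ direction (since the segment stops at $0$, $u$ is not linear in $x_n<0$) but still has a corner in the $y_n$-direction: the slopes of the subgradients jump from $0$ to $1$ across $y_n=0$. Combining this corner with the quadratic growth in $y'$ gives the mass blow-up $|N_{u^*}(D_\eps)|/|D_\eps|\to\infty$ of Remark 2.2. Concretely, I would verify conditions (a) and (b) of Remark 2.2 for $u^*$ restricted to a one-sided region, or use a direct Monge--Amp\`ere measure count: the subgradients at $y$ near $0$ fill the segment $\ell$ plus a $|y'|$-neighbourhood.

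\emph{Applying the a priori bound.} To apply Lemma \ref{L2.3} we need a sublevel set $\{u^*_m<\text{affine}\}$ compactly contained in $U_m\subset\Om^*$, with $|Du^*_m|$ bounded. Since $0$ is interior to $\Om^*$ (as $0\notin D\phi(\pom)$ by uniform convexity) and by \eqref{a4.15}, small sections around $0$ qualify; Lemma \ref{L2.3a} handles a portion of the boundary on which $\det D^2u^*$ is controlled. Passing to the limit yields $\det D^2u^*\le C$ near $0$, which is a contradiction.

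\emph{Main obstacle.} The hard step is the quadratic upper bound for $u$ near the interior endpoint $0$, i.e.\ the dual lower growth $u^*\ge c|y'|^2$ near the origin. It does not come from boundary data, and it may genuinely fail, as Example 2 suggests. If the direct approach fails, I would instead use a blow-up at $0$: renormalize sections $\{u<\ell_h\}$ around the segment by affine maps, obtaining in the limit an affine maximal hypersurface containing a ray or line. The strict convexity information near the boundary endpoint would be carried through the limit, and I would rule out the limit via Lemma \ref{L2.4} together with the dual estimate.
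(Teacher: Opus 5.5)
There is a genuine gap. It sits in the one-line claim that the dual origin is interior to $\Omega^*$ ``as $0\notin D\phi(\partial\Omega)$ by uniform convexity''.

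That argument works in Lemma \ref{L5.1} only because two distinct boundary points $\pm e_n$ realize $\min_{\partial\Omega}\phi=0$. With a single boundary endpoint $p_0$, uniform convexity shows only that $0\in D\phi(\partial\Omega)$ forces $D\phi(p_0)=0$, and nothing rules that out. In fact your own dual argument proves it must happen:
\begin{itemize}
\item The bound $u\le C|x'|^2$ on the part of the tube near $p_0$ (boundary data plus convexity) is enough. Combined with the corner bound $u^*\ge\max\{0,y_n\}$ from $\ell$, it already gives $u^*-\frac12y_n\ge c\,(|y'|^2+|y_n|)$ near the dual origin, as in \eqref{a5.11}--\eqref{a5.13}.
\item So no upper bound near the interior endpoint is needed, and the ``main obstacle'' you single out is not the real difficulty.
\item Lemma \ref{L2.3}, with the approximation of Remark 4.1(ii) and Remark 2.2, then shows that the dual origin \emph{cannot} be interior to $\Omega^*$.
\end{itemize}
This is the paper's Claim 1. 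It does not prove the lemma; it only reduces it to the case $D\phi(p_0)=0$, $0\in\partial\Omega^*$. In that case no section of $u^*$ or of $u^*_m$ around $0$ is compactly contained in $\Omega^*$. There is also no bound on $\det D^2u^*$ along $\partial\Omega^*$ to feed into Lemma \ref{L2.3a}. Your argument therefore has nothing left to apply.

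The missing idea is how to handle this boundary case. Normalize as in the paper, with $p_0=0$ and $\ell=\{te_n:0\le t\le1\}$.
\begin{itemize}
\item Claim 2: $\partial\Omega^*$ is tangent to $\{y_n=0\}$ at $0$. Otherwise a plane $x_{n+1}=C_1x_1$ with $C_1>0$ would support $u$ along $\ell$ including $p_0$, contradicting $\phi\le C|x'|^2$ on $\partial\Omega$.
\item Claim 3: hence $u_{x_n}\ge\phi_{x_n}\ge -C|x'|^2$ at boundary points near $p_0$. Together with $\phi\ge C_3|x'|^2$ on $\partial\Omega$, this yields the quadratic \emph{lower} bound $u\ge C_*|x'|^2$ around $\ell$.
\item Claim 4: this complements the upper bound $u\le C^*|x'|^2$ away from the interior endpoint.
\end{itemize}
These two-sided bounds make the anisotropic rescaling $x'\mapsto x'/\sqrt\epsilon$, $u\mapsto u/\epsilon$ nondegenerate. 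The limit is an affine maximal hypersurface in $\{0\le y_n\le 1,\ y_{n+1}\ge 0\}$. It contains $\ell$ and the vertical ray over the interior endpoint $e_n$. After a $45^\circ$ rotation in the $(y_n,y_{n+1})$-plane at $e_n$, it is the graph of a convex $v$ with $v\ge C|z'|^2$ and $v\ge|z_n|$, which Remark 2.2 excludes.

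Your fallback blow-up points in this direction. However, without the tangency of $\partial\Omega^*$ and the resulting lower bound, nothing ensures the limit keeps this corner-plus-quadratic structure rather than degenerating. As stated, it does not close the argument.
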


\noo{\it Proof}. Assume that $\ell$ is a line segment in the graph
$\M_u$ with an endpoint on the boundary $\p\M_u$.

First we choose the coordinates such that the endpoint is
the origin, $e_n=(0, 0, \cdots, 1)$ is the inner normal of $\pom$ at
the origin, and locally $\pom$ can be represented as
\beq \label{a5.8}
x_n=\eta(x')
\eeq
with $\eta(0)=0$ and $D\eta(0)=0$, where $x'=(x_1, \cdots, x_{n-1})$.
By the smoothness and uniform convexity of $\phi$, $\ell$ cannot lie in the $x'$-plane.

By the affine invariance, we can make a linear transform such that $\ell$ is given by
\beq \label{a5.9}
\ell=\{te_n\ |\ \ 0\le t\le 1\}. 
\eeq
Therefore $0$ and $e_n$ are the two endpoints of $\ell$.

Let $\L$ be a tangent plane of $\M_u$ containing $\ell$. 
By  Lemma \ref{L5.1}, $\L\cap\M_u$ contains only one boundary point.  
By subtracting a linear function we may
assume that  $\L=\{x_{n+1}=0\}$ and $u\ge 0$.
Then the origin is contained in $Du(x)$ for any point $x\in\ell$.

\vskip5pt
 
\noo $\underline{\text{Claim 1}}$: The origin is a boundary point of
$\Om^*:=D\phi(\Om)$.

Indeed, we have $u=0$ on $\ell$ and $u\ge 0$. By the smoothness of
$\phi$ and the convexity of $u$ we have
$$u(x)\le C |x'|^2$$
on the boundary. Hence by convexity, we have
\beq \label{a5.10}
u(x', x_n)\le C |x'|^2\ \ \forall\ x_n\in (0, 1-\delta),
\eeq
where $C$ depends on $\delta$. To prove \eqref{a5.10}, we consider the line
segment $l=\{(1-t)p_0+tp_1\ |\ t\in (0, 1)\}$, where $p_0=(x',
\eta(x'))$ is a boundary point and $p_1=(0, \cdots, 0, 1)$. Then
$u(p_1)=0$ and by convexity, $u(p_t)\le (1-t)u(p_0)\le C|x'|^2$). 

By the monotonicity of the Legendre transform (see property (i) in \S2.2), it follows
that
\beq \label{a5.11}
u^*(y)\ge {\Small\text{$\frac 1{2C}$}}  |y'|^2. 
\eeq
Moreover, since $\ell=\{te_n\ |\ \ 0\le t\le 1\}$, we have
\beq \label{a5.12}
u^*(y)\ge \max\{0, y_n\}.
\eeq
Subtracting $\frac 12 y_n$ from $u^*$, we obtain
\beq  \label{a5.13}
u^*(y)\ge {\Small\text{$\frac 12$}}  |y_n|. 
\eeq

If the origin is an interior point of $\Om^*$, then $u^*(0)=0$ and
by \eqref{a5.11}, \eqref{a5.13}, we have $u^*>0$ on $\pom^*$. Hence by Lemma \ref{L2.3}
and the approximation in Remark 4.1(ii), we have
\beq  \label{a5.14}
\det D^2 u^*\le C.
\eeq
But this is impossible due to \eqref{a5.11} and \eqref{a5.12}; (see Remark 2.2). The
claim is thus verified.

From Claim 1, we have $\phi(0)=0$ and $D\phi(0)=0$. By the Taylor
expansion, we have
\beq  \label{a5.15}
\phi(x)=\frac 12 {\sum}_{i, j=1}^n a_{ij} x_ix_j+ O(|x|^3). 
\eeq
By a rotation of the axes $x'$, we may assume that
$\{a_{ij}\}_{i,j=1}^{n-1}$ is diagonal. 
The affine invariance also allows us to assume that $\{a_{ij}\}_{i,j=1}^{n-1}$ is the unit matrix.

Recall that $\eta(0)=0$ and $D\eta(0)=0$ in \eqref{a5.8}, 
and near the origin, $\pom^*=D\phi(\pom)$.
Hence
$\pom^*$ can be represented as
\beq  \label{a5.16}
y_n={\sum}_{j=1}^{n-1}b_jy_j+O(|y'|^2),
\eeq
where $b_j=\frac{a_{nj}}{a_{jj}}=a_{nj}$ ($j=1, \cdots, n-1$). 
Hence $\{y_n=\sum b_jy_j\}$ is a tangent plane of $\Om^*$ at the origin.

\vskip5pt

\noo $\underline{\text{Claim 2}}$: $b_j=0$ for all $j=1, \cdots,
n-1$. Hence $\{a_{ij}\}$ is diagonal and $e_n$ is an inner normal to $\pom^*$ at $0$.

Indeed, if not, let us assume that $b_1>0$. For any given
$\delta>0$, denote 
\vskip5pt
\centerline{ $p'_\delta=(\delta, 0, \cdots, 0)\in\R^{n-1}$,\  $p_\delta=(p'_\delta, t_\delta)\in\pom$,}

\noo
where $t_\delta=\eta(p'_\delta)$ and $\eta$ is the function in \eqref{a5.8}.
Let
$\cR_\delta=\{(p'_\delta, t)\ |\  t>t_\delta\}$
be a ray starting at $p_\delta$, parallel to the $x_n$-axis. Consider the function
$u$ on the ray $\cR_\delta$. By convexity,
$$u(p'_\delta, t)\ge u(p_\delta)+u_{x_n}(p_\delta) (t-t_\delta). $$
Since $u\in \ol S[\phi, \Om]$, we have $Du(\Om)\subset
D\phi(\Om)=\Om^*$. Hence when $\delta>0$ is small,
$$
u_{x_n}(p_\delta)\ge \phi_{x_n}(p_\delta)  \ge {\Small\text{$ \frac \delta 2$}} b_1. 
$$
Noting that $u\ge 0$,  we have
$$u(p'_\delta, t)\ge u(p_\delta)+C\delta(t-t_\delta)
                \ge C\delta (t-t_\delta)$$
Fixing a $\tilde t\in (t_0, 1]$, we obtain
$$u(\delta, 0, \cdots, 0, \tilde t)\ge C_1\delta\ \ \ \forall\ \delta>0, $$
where $C_1=C(\tilde t-t_0)$. 
Since $u=0$ on the line segment $\ell=\{te_n\ |\ t\in [0,1]\}$ and $u\ge 0$, 
it implies that $x_{n+1}=C_1x_1$ is a supporting plane of
$u$ at any point on the line segment $\{te_n\ |\ t\in [0,\tilde t)\}$,
including the endpoint $0$. It follows that on the boundary $\pom$,
we have
$$u=\phi\ge C_1 x_1,$$
which is a contradiction with \eqref{a5.10}. The claim is thus verified.

\vskip5pt

\noo $\underline{\text{Claim 3}}$: For any $x'$ near $0$ and any
$x_n\in (0, 1]$ such that $x=(x', x_n)\in\Om$, we have
\beq  \label{a5.19}
u(x', x_n)\ge C_*|x'|^2 
\eeq
for some positive constant $C_*$.

Indeed, for any $x'$ near $0$, let $x_b=(x', \eta(x'))\in\pom$ be
a boundary point, and $\cR_{x'}=\{(x', t)\ |\ t>\eta(x')\}$ be a ray
parallel to the $x_n$-axis. We consider the function $u$ on the line
$\cR_{x'}$. By convexity we have
$$u(x', t)\ge u(x_b)+u_{x_n}(x_b)(t-\eta(x')). $$
By Claim 1, a boundary point $(y', y_n) \in \pom^*$ near the origin satisfies
\beq  \label{a5.17}
y_n\ge -C_2|y'|^2.  
\eeq
By the uniform convexity  and smoothness of $\phi$, we also have 
\beq  \label{a5.18}
\phi(x)\ge C_3|x'|^2 \ \ \forall\ \ x\in\pom, 
\eeq
where $C_2, C_3$ are positive constants.

By \eqref{a5.17} and since $u\in\ol S[\phi,\Om]$, we have
$$u_{x_n}(x_b)\ge \phi_{x_n}(x_b)\ge -C_4|x'|^2.$$
Hence by \eqref{a5.18},
\beq  \label{a5.20}
u(x', t)\ge
   C_3|x'|^2-C_4|x'|^2(t-\eta(x'))\ge  {\Small\text{$ \frac 12 $}} C_3|x'|^2 
   \eeq
when $t-\eta(x')$ is small. To obtain \eqref{a5.19}, we consider the
restriction of $u$ on a line segment with endpoints $0$ and $x=(x',
x_n)\in\Om$. By  the convexity of $u$, we have
\beq  \label{a5.21}
u(x)\ge {\Small\text{$\frac 1t$}} u(tx)\ge Ct |x'|^2,
\eeq
where the second inequality is because when $t>0$ is small, by
\eqref{a5.20} we have $u(tx)\ge C |tx'|^2$. We have thus proved \eqref{a5.19}.

From \eqref{a5.19}, we have $\ell=\{x\in\Om\ |\ u(x)=0\}$, namely $u(x)>0$
for any point not on the line segment $\ell$.

 \vskip5pt

\noo $\underline{\text{Claim 4}}$: For any $x'$ near $0$ and any
$x_n\in (0, 1)$ such that $x=(x', x_n)\in\Om$, we have
\beq  \label{a5.22}
u(x', x_n)\le C^*|x'|^2 
\eeq
for some positive constant $C^*$ (here $C^*$ depends on $x_n$, and $\eta(x')<x_n<1$.).

Indeed, let $\cR_x$ be a line segment connecting a boundary point
$x_b=(x'_b, \eta(x'_b))\in\pom$ to the point  $e_n=(0, \cdots, 0, 1)$,
which is an end point of the line segment $\ell$. 
Given a point $x=(x', x_n)\in \cR_x$, by the boundary
condition $u=\phi$ and noting that $\L=\{x_{n+1}=0\}$ is a supporting
plane of $u$ on the line segment $\ell$, we have $u(x_b)\le
C|x_b'|^2$. Inequality \eqref{a5.22} then follows from the convexity of $u$ on the segment
$\cR_x$.
(Note that $x_n<1$ in \eqref{a5.22}, and the constant $C^*$ depends on $x_n$.
The constant $C^*$ is not uniformly bounded when $x_n$ approaches to 1).

\vskip10pt

With \eqref{a5.19} and \eqref{a5.22}, we can prove Lemma \ref{L5.2} by a simple blow-up
argument. For any small $\eps>0$, let $\omega_\eps=\{x\in\Om\ |\
u(x)<\eps\}$. Let
$$y'= x'/\sqrt\eps,\ \ \ y_n  =x_n,$$
and let
$$u_\eps(y)=u(x)/\eps.$$
At $x_n=\frac 12$, by \eqref{a5.19} and \eqref{a5.22}, we have
\beq  \label{a5.23}
C_*|y'|^2\le u_\eps(y', \frac 12)\le C^* |y'|^2.
\eeq
Therefore by the proof of Claims 3 and 4, we see that $u_\eps$ also
satisfies \eqref{a5.19} and \eqref{a5.22} with the constant $C_*$ and $C^*$
independent of $\eps$.

Denote by $\M_\eps$ the graph of $u_\eps$. 
Then as $\eps\to 0$, by estimate \eqref{a5.23} 
we see that $\M_\eps$ sub-converges to a convex hypersurface $\tilde\M$. 
Since $u(x)>0\ \forall\ x\in\Om\backslash\ell$, we see that
\beq   \label{a5.24}
\tilde M\subset \{0\le y_n\le 1\}\cap\{y_{n+1}\ge 0\}.
\eeq
Apparently $\ell\subset \tilde \M$. The blow-up also implies that the ray
$$ \cR=\{e_n+te_{n+1}\ |\ t\ge 0\}\subset \tilde\M ,$$ 
where $e_n$ and $e_{n+1}$
are the unit vectors on the $y_n$ and the $y_{n+1}$ axes. Now we
choose a new coordinate system $z=(z_1, \cdots, z_{n+1})$ such that
\beqn 
 & z'=y', \no\\
 & z_n=\frac{\sqrt 2}{2}[(y_n-1)-y_{n+1}],    \label{a5.25} \\
 & z_{n+1}=\frac{\sqrt 2}{2}[(y_n-1)+y_{n+1}].\no
\eeqn
namely we first translate the coordinates such that the point $e_n$
is the origin, then rotate the $y_ny_{n+1}$-plane by 45 degrees. 
In the $z$-coordinates, the hypersurface $\tilde\M$ is the 
graph of a convex function $v$ which satisfies (near the origin)
\beqn
 &&  v(z', z_n)\ge C|z'|^2,     \label{a5.26} \\
 &&  v(z', z_n)\ge |z_n|,     \label{a5.27} 
 \eeqn
where \eqref{a5.26} follows from \eqref{a5.23} and \eqref{a5.27} follows directly from \eqref{a5.24}.

By the upper semi-continuity of the affine area functional, $\tilde
\M$ is affine maximal. But by Remark 2.2, an affine maximal
hypersurface cannot satisfy simultaneously \eqref{a5.26} and \eqref{a5.27}, so we
reach a contradiction. The contradiction implies there is no line
segment in the graph of $u$ for which one end point is a boundary
point. \hfil$\square$


\vskip10pt

\subsection {Strict convexity I\!I\!I}

With  Lemmas \ref{L5.1} and \ref{L5.2}, 
we can finally  prove that the maximizer $u$ is strictly convex.

\begin{Lemma} \label{L5.3} 
Suppose that $\Om$ is a smooth and bounded domain in $\R^n$, 
and $\phi$ is a smooth, uniformly convex function in $\bom$. Let $u$
be a maximizer of \eqref{a1.9}. Then $\M_u$ is strictly convex.
\end{Lemma}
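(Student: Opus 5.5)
We argue by contradiction. Suppose $\M_u$ is not strictly convex. Then there is a supporting hyperplane $\L$ of $\M_u$ such that the contact set $T=\L\cap\M_u$ contains more than one point. Since $u$ is convex, $T$ is a convex set of dimension $d\ge 1$, and it contains a line segment. By Lemmas \ref{L5.1} and \ref{L5.2}, no line segment in $\M_u$ has an endpoint on $\p\M_u$, and so $T$ is compactly contained in the interior of $\M_u$, i.e.\ its projection $T'$ lies in $\Om_\delta$ for some $\delta>0$. After subtracting a linear function we may take $\L=\{x_{n+1}=0\}$ and $u\ge 0$. Then $\{u=0\}=T'$ is a compact convex set in $\Om$, and $u>0$ on $\pom$. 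By an affine change of coordinates we place an extreme point of $T'$ at the origin, choose a segment $\ell=\{te_n\,|\,0\le t\le 1\}\subset T'$ emanating from it, and arrange that $T'\subset\{x_n\ge 0\}$ near $0$.

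The plan is to rerun the argument of Lemma \ref{L5.2}, now at an interior extreme point, using the sublevel sets $\{u<h\}$ in place of the boundary. Since $u>0$ on $\pom$, the set $S_h=\{u<h\}$ is compactly contained in $\Om$ for small $h$. First consider the nondegenerate situation, where $0$ is an exposed point at which $u$ separates quadratically in the directions transverse to $\ell$, i.e.\ $u(x)\ge c\,\max(|x'|^2,-x_n)$ near $0$ in a suitable tilted frame. In that case Remark 2.2 applies directly: the Monge-Amp\`ere measure of $u$ is unbounded near $0$. On the other hand, the approximation of Theorem \ref{Thm4.1}, together with the interior bound of Lemma \ref{L2.3} applied to $u_m-\ell_h$ on $S_h$ (case (i)), bounds $\det D^2u_m$, with constants independent of $\beta_m$ because $D_m\cap S_h=\emptyset$ for large $m$. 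This contradiction disposes of the nondegenerate case.

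In general no such quadratic growth is available, so I would instead blow up. Let $\omega_\eps=\{u<\eps\}$. Normalize $\omega_\eps$ by the affine map $A_\eps$ that sends its minimum ellipsoid to a ball, and set $u_\eps=\eps^{-1}u\circ A_\eps^{-1}$. The graphs converge, along a subsequence, to a convex limit hypersurface $\tilde\M$. It is affine maximal by upper semicontinuity of the affine area functional and affine invariance, and by Lemma \ref{L2.4} its Gauss curvature is bounded near the bottom. The key step is to show that the limit still carries a degenerate contact set, namely that $\tilde\M$ contains $\ell$ together with a vertical ray above an endpoint of $\ell$, exactly as in the derivation of \eqref{a5.24}--\eqref{a5.27}. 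After the $45^\circ$ rotation \eqref{a5.25}, $\tilde\M$ is then the graph of a convex function $v$ with $v\ge |z_n|$ and $v\ge C|z'|^2$, which contradicts Remark 2.2 combined with Lemma \ref{L2.4}. Alternatively, passing to Legendre transforms, the dual function satisfies \eqref{a5.5}--\eqref{a5.6}, and one contradicts Lemma \ref{L2.3}(ii) applied to $u_m^*$, which is available by Remark 4.1(ii).

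The main obstacle is obtaining the two-sided quadratic control of $u$ transverse to $\ell$, the interior analogues of Claims 3 and 4. In Lemma \ref{L5.2} these came from the uniform convexity of $\phi$ on $\pom$, and at an interior point that source is gone. I would try to recover the upper bound from the curvature bound of Lemma \ref{L2.4} on the normalized sections. I would try to recover the lower bound by choosing the extreme point of $T'$ to be exposed and taking $\ell$ to be a maximal segment, so that in the limit the section $\{u<\eps\}$ degenerates only along $\ell$. If $d\ge 2$, I would first reduce to a $1$-dimensional face by choosing an exposed point of $T'$.
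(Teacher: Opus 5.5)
There is a genuine gap, and it is exactly the step you flag as the main obstacle. Your nondegenerate case is correct: after subtracting a tilted plane, the hypotheses of Remark 2.2 hold, and Lemma \ref{L2.3}(i) on interior sections bounds $\det D^2u$ near $0$. But it only covers a configuration (a corner plus quadratic transverse separation at an interior extreme point) that nothing in the hypotheses produces. The general case rests on interior substitutes for Claims 3 and 4 of Lemma \ref{L5.2}, and the tools you name do not provide them.

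Lemma \ref{L2.4} bounds the Gauss curvature from above, i.e.\ it gives $\det D^2u\le C$. Such a bound says nothing toward $u\le C|x'|^2$. Pogorelov's function $(1+x_n^2)|x'|^{2-2/n}$ from Example 1 has smooth, bounded Monge--Amp\`ere density near the origin and vanishes on the $x_n$-axis. Yet it dominates $|x'|^{2-2/n}$, which is much larger than $|x'|^2$ near the axis. Choosing an exposed point or a maximal segment gives no quantitative transverse lower bound either.

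Moreover, $\{u<\eps\}$ surrounds all of $T'$, not just the chosen extreme point. So the shape of the normalized limit, and in particular a transverse bound like \eqref{a5.26} at its corner, is not controlled by anything you have proved. Nothing in your argument even prevents the limit zero set from being a larger flat face. Without this estimate, neither \eqref{a5.26}--\eqref{a5.27} nor, on the dual side, \eqref{a5.5}--\eqref{a5.6} is available. (In Lemma \ref{L5.1}, \eqref{a5.6} came from \eqref{a5.4}, i.e.\ from the boundary data.) Hence Remark 2.2 cannot be invoked.

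The missing idea is that no growth estimates or blow-up are needed. The two a priori bounds you already mention should be combined on the dual side and fed into Caffarelli's Lemma \ref{L2.2}, as the paper does.

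By Lemmas \ref{L5.1}, \ref{L5.2} and compactness, $u-\ell_{x_0}\ge\sigma(\delta)>0$ on $\pom$ for every $x_0\in\Om_\delta$ and every supporting affine function $\ell_{x_0}$. Hence Lemma \ref{L2.3}(i) gives $\det D^2u\le C_\delta$ in $\Om_\delta$. After normalizing $u(x_0)=0$, $u\ge 0$, this becomes a positive lower bound for $\det D^2u^*$ on small sections of $u^*$ at $0$.

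The same separation gives $u^*\ge\sigma^*>0$ on $\pom^*$. So for $a<\sigma^*$ the section $D_a=\{u^*<a\}$ stays away from $\pom^*$. Lemma \ref{L2.3}(ii) applied to the approximations $u_m^*$ then gives $\det D^2u^*\le C$ in $D_a$. This is legitimate by \eqref{a4.15} and \eqref{a4.30}, with constants independent of $\beta_m$ as in Remark 4.1(ii).

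Since $u^*=a$ on $\p D_a$, Lemma \ref{L2.2} shows that $u^*$ is strictly convex and $C^1$ in $D_a$. In particular, the subdifferential of $u^*$ at $0$, which is the contact set $\{u=0\}$, is a single point. This works at every interior point at once, with no case distinction and no control of transverse growth.
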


\noo{\it Proof}. With Lemmas \ref{L5.1} and \ref{L5.2}, 
it remains to rule out line segments in the graph $\M_u$,
of which both endpoints are interior points of $\M_u$.

First we note that for any $\delta>0$, by Lemmas \ref{L5.1} and \ref{L5.2}
and a compactness argument, there exists a constant $\sigma>0$, depending
on $\delta, n, \Om$ and the boundary condition $\phi$, but
independent of $u$, such that for any point $x_0\in\Om_\delta$,
\beq \label{a5.28} 
u(x)-u(x_0)-a \cdot (x-x_0)\ge \sigma, \ \ \forall\ x\in\pom, 
\eeq
where $u(x_0)+a \cdot (x-x_0)$ is a supporting plane of $u$ at $x_0$.
By Lemma \ref{L2.3}, we obtain the estimate
\beq \label{a5.29} 
\det D^2 u\le C\ \ \text{in}\ \ \Om_\delta, 
\eeq
where $C=C_\delta$ depends on $\delta$.
Recall that $u\in \ol S[\phi, \Om]$ has bounded gradient. 

For any point $x_0\in\Om_\delta$, by subtracting a linear function
we assume that $u(x_0)=0$ and $u(x)\ge 0$ in $\Om$. Let $u^*$ be the
Legendre transform of $u$. Then $u^*(0)=0$, $u^*\ge 0$ in $\Om^*$,
and $Du^*(\Om^*)\subset\Om$, namely $|Du^*|$ is bounded in $\Om^*$.
By \eqref{a5.28}  there is a positive constant $\sigma^*>0$ depending on
$\delta, \sigma, n, \Om$ and $\phi$, such that $u^*\ge\sigma^*$ on
$\pom^*$. Hence for any $a<\sigma^*$, if $u^*$ is smooth, applying the a priori estimate 
\eqref{a2.15},  we obtain 
\beq\label{a5.30}
\det D^2 u^*\le C\ \ \text{in}\ \ D_a:=\{x\in\Om^*\ |\ u^*(y)<a\}.
\eeq
If $u^*$ is not
smooth, then by Theorem \ref{Thm4.1}, $u^*$ can be approximated by smooth solutions.
Hence  we can also apply  \eqref{a2.15} to obtain \eqref{a5.30}.

On the other hand, by \eqref{a2.6} we have $\det D^2 u^*=[\det D^2 u]^{-1}$. 
Hence  by \eqref{a5.29} we have 
\beq\label{a5.31}
\det D^2 u^*\ge C^*_\delta\ \ \text{in}\ \  \Om^*_\delta ,
\eeq
where $C^*_\delta$ is a constant depending on $\delta$ and
$\Om^*_\delta=\{x\in\Om^*\ |\ \text{dist}(x, \pom^*)>\delta\}$. 
Note that by approximation and the weak convergence of the Monge-Amp\`ere measure, 
\eqref{a5.31} holds for non-smooth solutions.

By \eqref{a5.30} and \eqref{a5.31},  and by Lemma \ref{L2.2},
we conclude that $u^*$
is strictly convex and $C^1$ smooth in $D_a$. Hence the mapping
$y\to Du^*(y)$ is 1-1. Recall that $(u^*)^*=u$ (the Legendre
transform of $u^*$ is $u$). Hence $u$ satisfies \eqref{a2.11} in $\Om_a$
for $a>0$ small. Since $u$ is affine maximal, the singular part of
$\mu_u$ vanishes \cite{[TW2]}. Hence \eqref{a2.11} holds and so $u$ is strictly
convex near the origin. Hence the graph of $u$ does not contain any
line segment.  \hfill $\square$

\vskip10pt

\noo{\bf Proof of Theorem \ref{Thm5.1}}. 
From Lemmas \ref{L5.1}, \ref{L5.2} and \ref{L5.3},  the
maximizer of \eqref{a1.9} is strictly convex. By Theorem \ref{Thm4.1}, the maximizer
can be approximated by smooth solutions. Hence by the a priori
estimates in Proposition \ref{P2.1}, the maximizer is smooth and uniformly
convex in $\Om$. Theorem \ref{Thm5.1} is thus proved. \hfill $\square$

 \section{Regularity for the affine Plateau problem}

In this section, we prove Theorem \ref{Thm2.4}, that is
 
\begin{Theorem} \label{Thm6.1} 
Let $\M_0$ be a bounded, connected, smooth,
and locally uniformly convex hypersurface in $\R^{n+1}$, with $C^\infty$ smooth
boundary $\Gamma =\p\M_0$. Suppose the Gauss image of $\M_0$ does not
contain any hemisphere. Then the maximizer of \eqref{a1.10} is strictly
convex and $C^\infty$ smooth.
\end{Theorem}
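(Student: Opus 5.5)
The plan is to reduce Theorem \ref{Thm6.1} to the graph case treated in Section 5, by localization. Let $\M$ be a maximizer of \eqref{a1.10}; its existence is known from \cite{[TW2]}. As in Section 5, once $\M$ is shown to be strictly convex, the local approximation by smooth affine maximal hypersurfaces from \cite{[TW2]}, together with the a priori estimates of Proposition \ref{P2.1}, gives $C^\infty$ smoothness. So everything reduces to strict convexity.

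The difficulty is that Theorem \ref{Thm4.1} uses the uniqueness of the maximizer, which may fail for \eqref{a1.10}. I would therefore localize: any piece of $\M$ is itself a maximizer for its own boundary data. I would argue by contradiction. Suppose $\M$ contains a line segment $\ell$. I would cut $\M$ by hyperplanes to obtain pieces that are graphs of convex functions. Two situations arise.
\begin{itemize}
\item[(a)] The piece near $\ell$ is the graph of a convex function $u$ over a domain $\Om$ (possibly non-smooth, with $\phi=u$ near $\pom$). Then $u$ is a maximizer of \eqref{a1.9} with $f=0$ in $\ol S[u,\Om]$. This maximizer is unique by Lemma \ref{L2.1}, using Remark 2.1 to handle a non-smooth $\pom$ or $\phi$ under condition \eqref{phib}.
\item[(b)] The segment reaches $\Gamma=\p\M$.
\end{itemize}

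In case (b), near $\Gamma$ the hypersurface $\M_0$ is smooth and locally uniformly convex. After a rotation, a neighborhood of the relevant boundary portion of $\M$ is a graph over a domain whose boundary contains a smooth piece carrying smooth uniformly convex data. I would then rerun Lemmas \ref{L5.1} and \ref{L5.2} locally. Their proofs use only the boundary behavior near the endpoints of $\ell$, the Legendre transform bounds \eqref{a5.5}--\eqref{a5.6} and \eqref{a5.11}--\eqref{a5.13}, and Lemma \ref{L2.3} applied to $u^*$ through the approximation of Theorem \ref{Thm4.1} and Corollary \ref{Coro4.1} on the localized problem. The local problem is a first boundary value problem with unique maximizer, so Theorem \ref{Thm4.1} applies to it. When the sub-level set of $u^*$ meets the artificial part of the boundary, Lemma \ref{L2.3a} replaces Lemma \ref{L2.3}.

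For interior segments, case (a), I would follow Lemma \ref{L5.3}. First, I would extract a uniform gap $\sigma$ as in \eqref{a5.28}, obtained by compactness from the boundary step. Then I would obtain two-sided bounds on $\det D^2 u^*$ from Lemma \ref{L2.3} and the relation \eqref{a2.6}, and conclude with Lemma \ref{L2.2}.

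Where the graph description breaks down, so that $\M$ turns vertical relative to every chosen projection, I would use Lemma \ref{L2.4}. It gives a Gauss curvature bound depending only on $n$ after normalizing a cap $Q_\M$. With this bound, a possible line segment in the limit of normalized caps is handled by the same blow-up as in Claims 3--4 of Lemma \ref{L5.2}. In that blow-up, Remark 2.2 gives the contradiction with the bound.

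I expect the main obstacle to be the localization itself. A cap of $\M$ cut off by a hyperplane may have a non-smooth boundary, and the cut may run partly along $\Gamma$ and partly through the interior of $\M$. On such a cap, condition \eqref{phib} and the conclusion \eqref{a4.15} of Theorem \ref{Thm4.1} must still be verified so that the dual estimate is uniform in $\beta_m$. I would first try to choose the cutting hyperplane slightly tilted, so that its intersection with $\M$ is transversal and the extension of Remark 2.1(ii) is available. If that fails, I would split into subcases depending on whether the cap's boundary meets $\Gamma$, where the data are smooth, or lies entirely in the interior, where the data are the trace of $\M$ itself.
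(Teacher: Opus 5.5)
The parts you settled match the paper. These are the reduction to strict convexity, the case split by where the endpoints of $\ell$ lie, and the diagnosis that Theorem \ref{Thm4.1} needs uniqueness and hence a localized graph problem. The localization itself, which you flag as the main obstacle, is never carried out, and the fixes you suggest would not work.

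Taking the trace of $\M$ as boundary datum ($\phi=u$) fails in two ways. First, \eqref{phib} is a condition on the normal mapping of the datum near $\pom$. The maximizer is not known to be $C^1$ (affine maximal functions can have corners, cf.\ Example 2), so $N_u(\bom)$ may be strictly larger than $\overline{N_u(\Om)}$ at corner points of $u$ on $\pom$. Tilting the cutting hyperplane to make it transversal does nothing about this. Second, even granting the approximation, the dual step needs $u^*$ to be strictly positive on the boundary of the dual domain. Only then are the sections $\{u^*<a\}$ compactly contained, so that Lemma \ref{L2.3} applies uniformly in $\beta_m$. With $\phi=u$ the dual domain $N_u(\Om)$ carries no such information. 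Your fallback to Lemma \ref{L2.3a} requires a bound on $\det D^2u_m^*$ along the artificial boundary, uniform in $m$, and nothing supplies it.

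Relatedly, your claim that near a boundary segment a neighborhood of the relevant boundary portion is a graph is unjustified when both endpoints lie on $\Gamma$ and the cap $\M_\eps$ is not a graph (case c1.2). The blow-up of Claims 3--4 that you invoke for non-graph caps also does not transfer to interior segments. Those claims rest on an endpoint of $\ell$ lying on a smooth boundary carrying uniformly convex data.

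The missing idea is the conical datum \eqref{a6.8}, $\phi_h(x)=\sup\{a(x)\,:\, a \text{ affine},\ a(0)=-h,\ a\le u_\eps \text{ on } D_\eps\}$, with $\Om$ the open set bounded by $\{\phi_h=u_\eps\}$. This one device does several things at once:
\begin{itemize}
\item The graph of $\phi_h$ is a cone with vertex $(0,-h)$, so $N_{\phi_h}(\R^n)=N_{\phi_h}(\{0\})$. Hence \eqref{phib} holds trivially, $\phi_h$ is uniformly Lipschitz, and $\Om^*=N_{\phi_h}(\Om)$ is a bounded convex domain containing $0$.
\item For a.e.\ $h$ one has $|\pom|=0$, so Lemma \ref{L2.1} gives uniqueness.
\item Since $u_\eps\in\ol S[\phi_h,\Om]\subset\ol S[u_\eps,\Om]$, $u_\eps$ is the maximizer of this graph problem, and Theorem \ref{Thm4.1} applies.
\item $\phi_h^*\equiv h$ on $\Om^*$. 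So \eqref{a4.30} gives $u_\eps^*>0$ on $\pom^*$ while $u_\eps^*(0)=0$, which is exactly what Lemma \ref{L2.3} (case (ii)) needs to yield $\det D^2u_\eps^*\le c_2$ independently of $\beta_m$.
\end{itemize}

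The cone handles case c1.2: there \eqref{a5.3}, \eqref{a5.4} and \eqref{a5.6} follow from (p3) and the smoothness of $\Gamma$, and the argument of Lemma \ref{L5.1} applies. It also handles the non-graph case of c3. There Lemma \ref{L2.4} enters not through a blow-up but as the lower bound $\det D^2u_\eps^*\ge c_1$, replacing \eqref{a5.29} and \eqref{a5.31}. Lemma \ref{L2.2} then makes $u_\eps^*$ strictly convex and $C^1$, hence $\M$ strictly convex.
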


A {\it locally convex} hypersurface $\M$ is an immersion of an oriented
manifold in $\R^{n+1}$ such that near any point $p\in\M$, $\M$
can be represented as the graph of a convex function, and the
orientation of convexity agrees with the orientation of the
manifold. A locally convex hypersurface $\M$ is {\it convex} if at
any point $p\in\M$, $\M$ lies on one side of $\L_p$, where $\L_p$ is
a supporting plane of $\M$ at $p$.

We first formulate the following fundamental lemma for locally convex
hypersurfaces.

\begin{Lemma} \cite{[TW4], [VH]} \label{L6.1} 
Let $\M$ be a compact, locally convex
hypersurface (not necessarily smooth). Suppose that the boundary
$\p\M$ lies on a hyperplane. Then $\M$ is convex.
\end{Lemma}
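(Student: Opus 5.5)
The plan is to run a Hadamard/van Heijenoort type argument. Place the hyperplane containing $\p\M$ as $\{x_{n+1}=0\}$. The first step is to show that $\M$ lies on one side of this hyperplane. The second is to show that $\M$ projects injectively onto a convex domain as the graph of a convex function. Local convexity would then give global convexity.

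\emph{Step 1: one side.} By compactness, the height function $x_{n+1}$ attains its maximum and minimum on $\M$. Suppose, say, the maximum is positive. Then it is attained at an interior point $p$. Near $p$, $\M$ is the graph of a convex function over a supporting plane, and at an interior maximum of the height this forces the local convexity to open downward there. Since $\M$ is connected and the orientation of convexity agrees with the orientation of the manifold, this orientation propagates.

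We then move a horizontal hyperplane $\{x_{n+1}=t\}$ down from $t=\max x_{n+1}$. For $t>0$, each component of $\M\cap\{x_{n+1}\ge t\}$ is a compact locally convex cap whose boundary lies in $\{x_{n+1}=t\}$. Locally convex caps that stay convex are preserved under this monotone sweep; this is the standard open–closed argument in $t$. The cap would have to remain a "downward" cap all the way down to $t=0$. If the height also took negative values, the same argument from the minimum would produce an "upward" cap, and the two would be incompatible with a single consistent orientation. So we obtain $\M\subset\{x_{n+1}\ge0\}$, after reflection if necessary.

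\emph{Step 2: graph property.} Let $T\subset[0,\max x_{n+1})$ be the set of $t$ such that $\M_t:=\M\cap\{x_{n+1}\le t\}$ is convex, i.e.\ is a union of convex caps each lying on one side of its supporting planes. Near the minimum points of the height, $\M$ is locally a convex graph, so small $t$ lie in $T$; this is where one uses that a locally convex set is locally a convex graph.

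\textbf{Closedness} of $T$ follows from the Hausdorff limits of convex sets being convex.

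\textbf{Openness} is the key step. If $\M_t$ is convex and its boundary level set $\M\cap\{x_{n+1}=t\}$ is a convex hypersurface in the hyperplane (strictly, away from flat pieces), then local convexity near the rim lets us continue $\M_{t+\eps}$ as a convex cap, by a Tietze–Nakajima local-to-global convexity argument applied to the region enclosed by $\M_{t+\eps}$ and $\{x_{n+1}=t+\eps\}$. At $t$ just below the maximum, or at $t=0$ when $\p\M$ is reached, the whole $\M$ encloses a convex body together with a flat piece of the hyperplane containing $\p\M$. Hence every supporting plane at $p\in\M$ has $\M$ on one side.

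\emph{Main obstacle.} I expect the openness step to be the hardest, specifically ruling out self-overlapping immersed sheets, where $\M$ might wind around and its level sets might be non-embedded. My first approach would be the Tietze–Nakajima theorem: a closed connected set that is locally convex is convex. I would apply it to the closed set $Q_t$ bounded by $\M_t$ and the disc in $\{x_{n+1}=t\}$, checking local convexity of $Q_t$ at the rim using the convexity of the level set. Since the maximum is reached through this chain, no immersion can wrap around.

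Non-smoothness causes no real trouble: all notions used here (supporting planes, Hausdorff convergence) are purely convex-geometric, so the argument needs no regularity beyond local convexity. This matches the statement, which allows $\M$ to be non-smooth.
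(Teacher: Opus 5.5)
The paper does not prove this lemma; it is quoted from \cite{[TW4], [VH]}. Sweeping by parallel hyperplanes with an open--closed argument in $t$ is a reasonable strategy. However, you assert rather than prove the step that carries all the content, and the argument as written cannot be right. Nothing in it uses $n\ge 2$, yet for curves ($n=1$) the statement is false. An arc of a logarithmic spiral winding slightly more than once about its pole is locally convex and its two endpoints lie on a line, but it is not convex: the tangent line at, or near, its inner endpoint has points of the arc on both sides.

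Such examples show where your steps fail. In Step 1 you claim that an upward cap and a downward cap would be ``incompatible with a single consistent orientation''; this is false. The spiral arc has a cap with inner side below near its highest point and a cap with inner side above near its lowest point, on opposite sides of the line, with one consistent orientation, exactly as at the two poles of a round sphere.

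Next, take an immersed locally convex curve in a closed half-plane, with endpoints on the boundary line and total turning $3\pi$. The sweep from the top reaches a level where the rims of two caps meet, or the rim of one cap meets itself (two caps fuse, or a cap closes up on itself). After that level the component is no longer convex. Neither of your tools detects this: ``Hausdorff limits of convex sets are convex'' does not apply, since the component at the limiting level need not be the limit of the cap you were tracking, and ``local convexity near the rim lets us continue'' does not see it either. Excluding these events is exactly where $n\ge 2$ must be used, for instance through the connectedness of the rim of a convex cap. Your proposal has no such ingredient.

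Apart from this, several steps presuppose the conclusion.
\begin{itemize}
\item In Step 1, each component of $\M\cap\{x_{n+1}\ge t\}$ is itself a compact locally convex hypersurface with boundary in a hyperplane. So ``caps stay convex under the sweep'' is the lemma applied to these caps, asserted without proof.
\item In the openness step, Tietze--Nakajima needs a closed set $Q_t\subset\R^{n+1}$ bounded by $\M_t$ and a flat disc. For an immersed $\M$ such a set exists only once you know that $\M_t$ is embedded and that $\M\cap\{x_{n+1}=t\}$ bounds a convex domain in the hyperplane. Both facts are assumed; ``no immersion can wrap around'' is the conclusion, not an argument.
\item In Step 2 the base case is unjustified. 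If $\M$ is a dome, $\M\cap\{x_{n+1}\le t\}$ for small $t$ is a collar of $\p\M$, and its convexity is a global property that local convexity does not give.
\item Even granting that every cap is convex for $t>0$, you must still show that the limiting rim at $t=0$ is $\p\M$. It could instead contain interior points across which $\M$ continues below the hyperplane, which is what happens for the spiral. Ruling this out again needs the connectedness of $\M$ and of the rim.
\end{itemize}
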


Let $\M_0$ be as in Theorem \ref{Thm6.1}. By Lemma \ref{L6.1} we have:
$$\begin{array}{l}
 \text{(a1) 
   there exists $R>0$ such that $\M\subset B_R(0)$ $\forall\ M\in\ol S[\M_0]$;}\\
 \text{(a2) 
   there exists $r>0$ such that for any $\M\in\ol S[\M_0]$ and any point $p\in\M$,}\\
   \text{ \ \ \ \ \ the component of $B_r(0)\cap \M$ containing $p$ is a
convex graph.}
\end{array} $$
We  refer the reader to \cite{[TW2], [TW4]} for discussions on locally convex hypersurfaces with boundary.

For a locally convex hypersurface $\M$,  recall that the affine area
functional is given by $A(\M)=\int_\M K^{1/(n+2)}$ when $\M$ is
smooth. Similarly to \eqref{a2.1} we extend it to nonsmooth, 
locally convex hypersurfaces by letting $K$ be the Gauss curvature of $\M$ 
at twice differentiable points, and $K=0$ elsewhere. 
Then $A$ is also upper semicontinuous. Therefore we have {\cite{[TW2]}}

\begin{Lemma}  \label{L6.2}  
Let $\M_0$ be as in Theorem \ref{Thm6.1}.  Then there is a locally convex
maximizer $\M$ to
$\sup_{\M'\in \ol S[\M_0]} A(\M') . $
\end{Lemma}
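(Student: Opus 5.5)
The plan is the direct method, using the upper semicontinuity of the extended affine area $A$ on locally convex hypersurfaces together with the compactness supplied by Lemma \ref{L6.1}.

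\emph{Step 1: a finite maximizing sequence.} I will first check that $\sup_{\ol S[\M_0]}A$ is finite. By (a1) and (a2), every $\M\in\ol S[\M_0]$ lies in $B_R(0)$. Moreover, every such $\M$ is covered by a bounded number of convex graph pieces of size $r$. On each piece, the estimate \eqref{a2.2} gives
$$\int K^{1/(n+2)}\le |\M\cap B_r|^{(n+1)/(n+2)}\Big(\int K\Big)^{1/(n+2)}.$$
The right-hand side is controlled by the Gauss image, which is contained in the Gauss image of $\M_0$. The area of each piece is also bounded, since convex graphs inside a ball have bounded area. Hence $A$ is uniformly bounded on $\ol S[\M_0]$, and I may take a maximizing sequence $\M_k$.

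\emph{Step 2: compactness.} Using (a1) and (a2), I will pass to a subsequence converging in the Hausdorff sense. Locally, the $\M_k$ are graphs of convex functions with a uniform Lipschitz bound. This Lipschitz bound comes from the Gauss image constraint: because the Gauss image of $\M_0$ contains no hemisphere, the normals stay in a compact subset of an open hemisphere, so on suitable charts the gradients are bounded. The local graphs therefore converge uniformly, and the limit $\M$ is locally convex with boundary $\Gamma$.

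\emph{Step 3: admissibility of the limit.} I must show that $\M\in\ol S[\M_0]$, namely that $\M$ is still deformable from $\M_0$ through locally convex hypersurfaces whose Gauss images lie in that of $\M_0$. I will concatenate the deformation of $\M_k$ with a short path from $\M_k$ to $\M$ for $k$ large. For this path I will use the convex combination of the local graph functions in the common charts. This keeps each intermediate surface locally convex, and it keeps the normals in the convex hull of the normal images, hence in the closed Gauss image of $\M_0$. The closure is allowed by the definition of $\ol S$, in analogy with $\overline{N_\phi(\Om)}$.

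\emph{Step 4: conclusion.} Finally, I will apply upper semicontinuity chart by chart, which is the graph case from \cite{[Lu], [TW1], [TW2]}, and combine it with a partition into finitely many pieces. Since the pieces have small-measure overlaps, and \eqref{a2.2} controls the affine area on small sets, this yields $A(\M)\ge\limsup_k A(\M_k)$. Hence $\M$ is a maximizer.

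I expect Step 3 to be the main obstacle. Hausdorff limits of immersed, only locally convex surfaces can change topology or fold, so I will rely on (a2) to keep the chart structure uniform along the sequence. I will also need to justify that the convex-interpolation path glues consistently across overlapping charts. This is where the no-hemisphere assumption and Lemma \ref{L6.1} are used essentially.
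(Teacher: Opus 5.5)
Your overall route is the one the paper uses: compactness of $\ol S[\M_0]$ from Lemma \ref{L6.1} through (a1)--(a2), plus upper semicontinuity of the extended affine area. The paper simply notes that $A$ is upper semicontinuous and refers to \cite{[TW2]}. The step you add that does not work is Step 3. If $v_t=(1-t)u_k+tu$ in a common chart, then $Dv_t(x)=(1-t)Du_k(x)+tDu(x)$. So the normal of the interpolant over $x$ moves along the great-circle arc joining the normals of $\M_k$ and of $\M$ over $x$. That arc lies in the spherical convex hull of the two Gauss images. The Gauss image of $\M_0$ is not assumed to be spherically convex, so ``hence in the Gauss image of $\M_0$'' does not follow. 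The two endpoints need not even be close. Uniform convergence $|u_k-u|\le\epsilon_k$ only puts $Du_k(x)$ in the $2\epsilon_k$-subdifferential of $u$ at $x$. The most you get is that the interpolating normals stay in a small neighbourhood of the Gauss image, and passing to a closure does not absorb an overshoot of positive size for each fixed $k$. Together with the chart-matching problem you already flag, Step 3 does not establish $\M\in\ol S[\M_0]$. The paper does not re-prove this closedness; it takes the compactness of $\ol S[\M_0]$ from \cite{[TW2], [TW4]}, just as compactness of $\ol S[\phi,\Om]$ is taken as immediate in the graph case. You should either rely on that or find a deformation compatible with a nonconvex Gauss-image constraint.

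Two other justifications are also incorrect. In Step 2, the no-hemisphere hypothesis does not confine the normals to a compact subset of an open hemisphere. For $n\ge 2$ the band $\M_0=\{x\in S^n:\ |x_{n+1}|\le\delta\}$ satisfies all the hypotheses. Its Gauss image, the band itself, contains no closed hemisphere, since every closed hemisphere contains $e_{n+1}$ or $-e_{n+1}$; yet it contains antipodal points. In the paper that hypothesis is invoked for finiteness of the supremum (and boundedness of the class), not for gradient bounds. Local compactness should instead come from (a2) and Blaschke selection on the convex pieces, with the extension $\wtt\M$ handling a neighbourhood of $\Gamma$. The same band is also not spherically convex, which shows concretely that the implication used in Step 3 fails. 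Finally, Step 1 is unnecessary: an upper semicontinuous functional that is finite at each element attains a finite maximum on a compact set. Its ``bounded number of graph pieces'' claim would in any case need a uniform area or multiplicity bound that you have not provided.
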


The condition that the image of the Gauss mapping of $\M_0$ does not
cover any hemi-sphere is necessary \cite{[TW2]}, otherwise the supremum
$\sup_{\M\in \ol S[\M_0]} A(\M) $ is unbounded.

\vskip10pt

Before proving   Theorem \ref{Thm6.1}, we introduce some notation.
As in \cite{[TW2]}, we extend $\M_0$ to a smooth, locally uniformly convex
hypersurface $\wtt \M_0$ such that any point $p\in \p\M_0$ has a positive geodesic 
distance $\delta_0>0$ to $\p \wtt\M_0$. For any $\M\in \ol S[\M_0]$, we also extend
$\M$ to a locally convex hypersurface $\wtt\M$ such that $\wtt\M-\M=
\wtt\M_0-\M_0$.

Let $\M$ be a maximizer in Lemma \ref{L6.2}. For any point $p\in\M$, let $\L$
be a supporting plane of $\M$ at $p$. By a proper choice of the
coordinates, we assume that $p=0$, $\L=\{x_{n+1}=0\}$, and $-e_{n+1}=-(0,
\cdots, 0, 1)$ is a unit normal of $\M$ at $p$ (pointing to the convex side).
Move the hyperplane $\L$ upwards by a small distance $\eps\in (0,
\eps^*)$, where $\eps^*>0$ depends only on the extended part
$\wtt\M_0-\M_0$, such that $\L$ does not touch the boundary
$\p\wtt\M$. Then by Lemma \ref{L6.1}, $\L$ cuts off a convex cap $\M_\eps$
from $\wtt\M$, and $\p\M_\eps$ is an $(n-1)$-dimensional
convex hypersurface in the hyperplane $\{x_{n+1}=\eps\}$.

\vskip10pt

Let $Q_\eps$ denote the convex body enclosed by the cap $\M_\eps$ 
and the hyperplane $\L=\{x_{n+1}=\eps\}$. 
We make a linear transform of the coordinates such that $Q_\eps$ is {\it normalized}
(see normalization before Lemma \ref{L2.4}).
We may simply say that $\M_\eps$ is normalized.


Let $D_\eps$ be the projection of $\M_\eps\cup\p\M_\eps$ on the plane
$\L=\{x_{n+1}=0\}$. Then $D_\eps$ is a closed convex set and $\M_\eps$ is
contained in the cylinder $D_\eps\times [0, 1]$. 
We can define a convex function $u_\eps$ in $D_\eps$ as follows. 
For any interior point $x\in D_\eps$, let
\beq  \label{a6.4}
u_\eps (x)=h , 
\eeq
where $h$ is chosen such that $(x, h)\in\M_\eps$ 
and the normal of $\M_\eps$ at $(x, h)$ is in the south hemisphere. 
Since $\M_\eps$ is convex, this $h$ is uniquely determined.
By definition, the graph of $u_\eps$ is a part of $\M_\eps$.
We can extend $u_\eps$ to $\p D_\eps$
by letting $u_\eps(x)={\underline\lim}_{y\to x,\ y\in D_\eps} u_\eps(y)$,
$\forall\ x\in \p D_\eps$.

\vskip10pt

\noo{\bf Proof of Theorem \ref{Thm6.1}.}
Similar to Theorem \ref{Thm5.1},
it suffices to prove the strict convexity of $\M$. 
If $\M$ is not strictly convex, it contains a line segment $\ell$.
Choosing the coordinates properly, we assume that $\ell$ is
contained in the hyperplane $\L=\{x_{n+1}=0\}$ and $-e_{n+1}$ is a
normal of $\M$ on $\ell$.  As in the proof
of  Theorem \ref{Thm5.1},  it suffices to rule out three different cases.
\bei
\item [\bf c1)]  Both endpoints of $\ell$ are boundary points of $\M$.
\item [\bf c2)] Only one endpoint of $\ell$ is a boundary point of $\M$.
\item [\bf c3)] None of the endpoints of $\ell$ is a boundary point of $\M$.
\eei
These three cases correspond to Lemmas \ref{L5.1}-\ref{L5.3}, respectively. 

\vskip10pt

\noo {\bf Case c1)}. By a linear transform of the coordinates, we may assume that
\beq  \label{a6.5}
\ell=\{x\ |\ x'=0, -1\le x_n\le 1, x_{n+1}=0\},
\eeq
the endpoints $\pm e_n\in\p\M$, and that $\{x_{n+1}=0\}$ is a supporting plane of $\M$ at $0$, 
where $x'=(x_1,\cdots, x_{n-1})$, $e_n=(0,\cdots, 0, 1,0)\in \R^{n+1}$.
Let $\M_\eps$ be as above.

 \bei
\item [\bf c1.1)] If $\M_\eps$ is a graph of the form
\beq  \label{a6.6}
x_{n+1}=u(x),
\eeq
the proof of Lemma \ref{L5.1} applies to the current case, no change is
needed.

\vskip7pt

\item [\bf c1.2)] If $\M_\eps$ is not a graph of the form \eqref{a6.6}, 
we fix an $\eps\in (0, \eps^*)$, say $\eps=\frac 12 \eps^*$, and
normalize $\M_\eps$ as above. 
We introduce a convex function $\phi=\phi_h$ as follows,
where $h>0$ is a small positive constant.
\beq  \label{a6.8}
\phi(x)=\sup\{a(x)\ |\ 
 a\ \text{is a linear function,} \ a(0)=-h, \  a (x)\le u(x)\ \forall\ x\in D_\eps\}, 
 \eeq
The graph of $\phi$ is a convex cone with vertex at $(0, -h)$.
Extend $\phi$ to $\R^n$ by letting $\phi(tx)+h=t(h+\phi(x)$, 
such that the graph of $\phi$ is a convex cone in $\R^{n+1}$ without boundary.

\vskip7pt

Let $\Gamma = \Gamma_h $ be given by
$$\Gamma = \{x\in D_\eps\ |\ \phi(x)=u_\eps(x)\} , $$ 
and let $\Om$ be the open set bounded by $\Gamma$.
Since $\phi$ is a convex cone, it satisfies \eqref{phib};
in fact, $N_\phi(\R^n)=N_\phi(\{0\})$.
We have the following properties.
\vskip5pt

\bei
\item [(p1)]
For a.e. $h>0$, the measure  $|\Gamma_h|=0$.
\item [] Indeed, if $x\in\Gamma_h\cap\Gamma_{h'}$ for $h\ne h'$,
then $u$ is not twice-differentiable at $x$.
\vskip3pt

\item [(p2)]   $\pom\subset \Gamma$, and $\Om$ is star-shaped.
\vskip3pt

\item [(p3)]   $\pm e_n\in\Gamma$, where $\pm e_n$ are the endpoints of the segment $\ell$ given in \eqref{a6.5}.
\vskip3pt

\item [(p4)]    $u_\eps>\phi$ in $\Om$, $u_\eps=\phi$ on $\p\Om$, and $u_\eps\ge \phi$ in $D_\eps-\bom$.

\item [(p5)] By the local uniform convexity of $\M_0$, 
the graph of $u_\eps$ in $\Om$ is contained in $\M$ (when $h>0$ is small),
hence it is affine maximal.

\eei
\vskip5pt

\noo Consequently, 

\vskip5pt

\bei
\item [(i)] By (p4),  we infer that  $N_\phi(\bom)\subset N_{u_\eps}(\bom)$,
    hence $\ol S[\phi, \Om]\subset\ol S[u_\eps, \Om]$.
\item [(ii)] By (p4), we also have $u_\eps\in \ol S[\phi, \Om]$, when $u_\eps$ is restricted to $\Om$.
\item [(iii)] By the definition of affine maximizers,  $u_\eps$ is the maximizer of $\sup_{\ol S[u_\eps, \Om]} A$.
Hence by (i) (ii), $u_\eps$ is a maximizer of $\sup_{\ol S[\phi, \Om]} A$,   namely, 
\beq  \label{a6.7}
A(u_\eps)=\sup\{A(w)\ |\ w\in \ol S[\Om, \phi]\}. 
\eeq
\eei

We are in the graph case as in \S 5 (Lemma \ref{L5.1}).
Hence we can use the approximation in Theorem \ref{Thm4.1} to reach a
contradiction. By property (p3),  the smoothness of $\p\M_0$ implies
that \eqref{a5.3} holds, which in turn implies \eqref{a5.4} and \eqref{a5.6}. Hence the
proof of Lemma \ref{L5.1} gives a contradiction.
 
 \eei
 
\vskip10pt

\noo {\bf Case c2)}. 
In this case,  by the local uniform convexity of $\M_0$, 
we can make a translation of the coordinates and a rotation of the coordinates such that
 
\bei
\item [(i)]  $\p\M$ is tangent to $\{x'=0\}$ at the endpoint on the boundary.
\vskip3pt

\item [(ii)] $\ell=\{x\ |\ x'=0, 0\le x_n\le b, x_{n+1}=\alpha x_n\},$ where $x'=(x_1, \cdots, x_{n-1})$.
\vskip3pt

\item [(iii)] $-e_{n+1}+\beta e_n$ is a normal of $\M_0$ at $0$.
\eei

\noo where $\alpha, \beta$ are constants. 
There is no loss of generality in assuming that $b=1$. 
Subtracting a linear function such that $\alpha=0$.
Therefore

\bei
\item [(ii')] the origin $0$ is an endpoint of $\ell$ on $\p\M$, $e_n$ is the other endpoint of $\ell$,

\vskip3pt

\item [(iv)]  $\M_\eps\subset\{x_{n+1}\ge 0\}$,

\vskip5pt

\item [(v)] locally near $0$, $\M_\eps$ is the graph of a convex function $u_\eps$ (by property (a2) after Lemma \ref{L6.1}).

\eei

\noo 
Note that in property (v), $\M_\eps$ can be represented as a graph of a convex function $u_\eps$ locally near $0$.
By the convexity of $\M_\eps$,  $u_\eps$ is defined at least in the domain
$$\{x\in\R^n\ |\ {\Small\text{$\frac{|x'|}{\delta_0}$}}+x_n<1, \ x_n>0\},$$
for some small $\delta_0>0$.

Checking the proof of Lemma \ref{L5.2},  one sees that the  proofs of the claims 1-4 in the
proof of Lemma \ref{L5.2} are all valid.
In particular \eqref{a5.19} and \eqref{a5.22} are
still true. Therefore we can make the same blow-up argument as in
\S5 and reach a contradiction.
 
\vskip5pt
 
\noo{\bf Case c3)}. 
We first show that  $\M$ 
is locally uniformly convex and smooth near the boundary $\p\M$. 
Indeed, if this is not true, then there is a sequence of line segments
$\ell_k\subset\M$, such that one of its endpoints converges to the boundary $\p\M$. 
By Cases c1) and c2) above,  no endpoint can be a boundary point. 
Note that the length of $\ell_k$ cannot be too small, for otherwise 
by property (a2) stated before Lemma \ref{L6.2},  $\p\M$ is locally a graph and 
the argument in \S5 implies a contradiction.
Therefore the sequence $\ell_k$ converges to a line segment $\ell\subset\M$
of which at least one point is a boundary point and we are in Case c1) or c2),
which were ruled out above.

For any point $p\in\M$ and any supporting plane $\L_p$ of $\M$ at $p$, 
denote  $T_p=\M\cap\L_p$ (since $\M$ is locally convex,
the set $\M\cap\L_p$ may contain more than one component,
if this is the case we choose  $T_p$ the component which contains the point $p$). 
If $\M$ is not strictly convex near $p$, then 
$T_{p}$ contains a line segment $\ell$.

As above we choose the coordinates such that 
$\L_p=\{x_{n+1}=0\}$,  locally $\M\subset\{x_{n+1}\ge 0\}$.
Let $\M_\eps$ be as above. Since $\M$ is locally strictly convex near $\p\M$,
$\M_\eps$ does not contain any boundary point of $\M$ if $\eps>0$ is small.

If $\M_\eps$ is a graph, namely if it can be represented as \eqref{a6.6}, 
the proof in \S5 (Lemma \ref{L5.3}) applies and we reach a contradiction.

If $\M_\eps$ cannot be represented as a graph as \eqref{a6.6}.
Let $D_\eps$ and $u_\eps$ be as above. 
Let $\phi=\phi_h$ and $\Om$ be as in Case c1), given in \eqref{a6.8}, 
which satisfy the properties (p1) - (p5) after \eqref{a6.8}. 
Then $u_\eps$ is the maximizer of the
affine area functional $A$ in $\ol S[\Om, \phi]$, as in \eqref{a6.7}. 

We are in a similar situation as in Lemma \ref{L5.3}. 
To reach a contradiction, we argue as follows.
Let $u^*_\eps$ be the Legendre transform of $u_\eps$. 
Then by approximation, $u_\eps^*$ is the limit of a sequence of smooth solutions $u^*_m$, 
and hence by \eqref{a4.15}, $u_\eps^*$  is defined in $\Om^*:=N_{\phi}(\Om)$.
Since $\phi$ is a convex cone, $N_\phi$ is a convex domain.
By  \eqref{a4.30}, 
\beq  \label{a6.10}
u^*_\eps>0\ \ \text{on} \ \ \pom^*. 
\eeq 
Hence by Lemma \ref{L2.3} we obtain the upper bound 
\beq  \label{a6.12}
\det D^2 u_\eps^*\le c_2.
\eeq

On the other hand, 
there is no loss of generality in assuming that $\M_\eps$ is normalized.
In Lemma \ref{L2.4} 
we proved that the Gauss curvature of $\M_\eps$ is uniformly bounded near $T_{p}$. 
It implies that there exists a constant $c_1>0$ such that
\beq  \label{a6.11}
\det D^2 u_\eps^*\ge c_1 . 
\eeq
Applying Lemma \ref{L2.2} to $u_\eps^*$, 
we see that  $u_\eps^*$ is strictly convex and $C^1$ smooth, which 
in turn implies that $\M$ is strictly convex, and we reach a contradiction. \hfill $\square$


 \end{document}